\newtheorem*{theorem*}{Theorem}
\newtheorem*{acknowledgement}{Acknowledgements}
\newtheorem{theorem}{Theorem}[section]
\newtheorem{lemma}[theorem]{Lemma}
\newtheorem{corollary}[theorem]{Corollary}
\begin{document}

\title[On the multiparameter Falconer distance problem]{On the multiparameter Falconer distance problem}
\author[X. Du]{Xiumin Du}
\author[Y. Ou]{Yumeng Ou}
\author[R. Zhang]{Ruixiang Zhang}

\address[X. Du]{Department of Mathematics, Northwestern University, Evanston, IL 60208}
\address[Y. Ou]{Department of Mathematics, University of Pennsylvania, Philadelphia, PA 19104}
\address[R. Zhang]{Department of Mathematics, University of California, Berkeley, Berkeley, CA 94720}

\begin{abstract} 
We study an extension of the Falconer distance problem in the multiparameter setting. Given $\ell\geq 1$ and $\mathbb{R}^{d}=\mathbb{R}^{d_1}\times\cdots \times\mathbb{R}^{d_\ell}$, $d_i\geq 2$. For any compact set $E\subset \mathbb{R}^{d}$ with Hausdorff dimension larger than $d-\frac{\min(d_i)}{2}+\frac{1}{4}$ if $\min(d_i) $ is even, $d-\frac{\min(d_i)}{2}+\frac{1}{4}+\frac{1}{4\min(d_i)}$ if $\min(d_i) $ is odd, we prove that the multiparameter distance set of $E$ has positive $\ell$-dimensional Lebesgue measure. A key ingredient in the proof is a new multiparameter radial projection theorem for fractal measures.
\end{abstract}

\maketitle

\section{introduction}

Let $\vec{d}=(d_1,\cdots,d_\ell)$ and $d=d_1+\cdots + d_\ell$ with integers $\ell\geq 1$ and $d_i\geq 2$, $\forall 1\leq i\leq \ell$. Denote $x=(x_1,\cdots,x_\ell)\in \mathbb{R}^{d_1}\times\cdots\times\mathbb{R}^{d_\ell}=\mathbb{R}^d$. For a compact set $E \subset \mathbb{R}^d$, define its multiparameter distance set to be
\smallskip 
\[
\Delta^{\vec{d}}(E):=\{(|x_1-y_1|,\ldots, |x_\ell-y_\ell|)\in \mathbb{R}^{\ell}:\, x, y \in E\}.
\]

We are interested in studying how large the Hausdorff dimension of $E$ needs to be in order to ensure that $|\Delta^{\vec{d}}(E)|_\ell$, the Lebesgue measure of $\Delta^{\vec{d}}(E)$ in $\mathbb{R}^{\ell}$, is positive. In particular, when $\ell=1$, this is precisely the Falconer distance problem, for which the conjecture \cite{Falc86} (still open in all dimensions $d\geq 2$) is that
\[
\text{dim}(E)>\frac{d}{2}\implies |\Delta(E)|_1>0,
\]where we have denoted in this case $\Delta(E):=\Delta^{\vec{d}}(E)$ for short. Here and throughout the article, ${\rm dim}$ denotes the Hausdorff dimension, and oftentimes we write $|\cdot|=|\cdot|_\ell$ when the dimension of the Lebesgue measure is clear from the context. 

The Falconer distance conjecture, which is a famous difficult problem in geometric measure theory, is a continuous version of the celebrated Erd\H{o}s distinct distance conjecture whose two-dimensional case was resolved by Guth and Katz \cite{GK15}. The study of the Falconer problem is naturally related to Fourier restriction theory, projection theory of fractal measures, and incidence geometry. It has attracted a great amount of attention over the decades (for instance \cite{Mat85, B94, W99, Erd05}) and has seen some very recent breakthroughs. See \cite{GIOW, DIOWZ, DGOWWZ18, DZ18} and the references therein for more details.

As far as we know, the multiparameter version of the Falconer distance problem was first proposed by Hambrook--Iosevich--Rice \cite{HIR}, where the authors utilized a group action method to turn the original question into the estimate of an integral that resembles the Mattila integral \cite{Mat85} for the original Falconer problem. Moreover, they observed that, by considering the construction of Falconer \cite{Falc86} in one hyperplane crossed with full boxes in the other hyperplanes, for each $0<s<d-\frac{d_{\min}}{2}$, where $d_{\min}:=\min (d_1,\cdots, d_\ell)$, there exists a compact set $E$ such that $\text{dim}(E)=s$ and $|\Delta^{\vec{d}}(E)| = 0$. It is unclear, however, whether one can show that the multiparameter Mattila integral is bounded using spherical decay estimate of Fourier transform of measures, as in the one-parameter case \cite{HIR per}.

In the discrete setting, multiparameter Falconer-Erd\H{o}s type distance problems have also been studied both in the Euclidean spaces and finite fields, see \cite{BI17, IJP17, FMPS21}.

In this article, we obtain the first result for the multiparameter Falconer distance problem in the continuous setting, towards the conjectured dimensional threshold $d-\frac{d_{\min}}{2}$. Our proof is based on a multiparameter extension of some key ideas arising in recent works \cite{GIOW, DIOWZ} on the original Falconer distance problem, and in fact implies the stronger pinned distance result. In the one-parameter case, these ideas have proven to be useful in not only estimating the dimension of distance set \cite{Liu19, Shm20} but also the study of restricted families of projections \cite{Harris19}. Hence the multiparameter version of the framework obtained here is expected to be of independent interest and may have further applications in other problems in geometric measure theory that display non-isotropic dilation structure.

\begin{theorem}\label{thm: main1}
Let $d=d_1+\cdots +d_\ell$ with $d_i\geq 2$, $\ell\geq 1$ and denote $d_{\min}=\min(d_1,\cdots, d_\ell)$. Then, for any compact set $E\subset \mathbb{R}^d$ satisfying 
\[
\text{dim}(E)>\begin{cases} d-\frac{d_{\min}}{2}+\frac{1}{4},& d_{\min} \text{ is even},\\ d-\frac{d_{\min}}{2}+\frac{1}{4}+\frac{1}{4d_{\min}},& d_{\min} \text{ is odd},\end{cases}
\]there exists $x\in E$ such that
\[
\left|\Delta^{\vec{d}}_x(E):=\{(|x_1-y_1|,\ldots, |x_\ell-y_\ell|)\in \mathbb{R}^{\ell}:\, y \in E\}\right|>0.
\]
\end{theorem}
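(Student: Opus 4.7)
The plan is to follow the framework of \cite{GIOW, DIOWZ} for the one-parameter Falconer problem, adapted to the product setting. I would fix a Frostman measure $\mu$ supported on $E$ of dimension $s$ slightly larger than the stated threshold, and for each $x \in E$ introduce the pushforward $\nu_x := (D_x)_*\mu$ on $\mathbb{R}^\ell_+$, where $D_x(y) := (|x_1-y_1|, \ldots, |x_\ell-y_\ell|)$. Since $\supp \nu_x \subset \Delta^{\vec{d}}_x(E)$, it suffices to produce a pin $x \in E$ for which $\nu_x$ is absolutely continuous with square-integrable density, since then $|\Delta^{\vec{d}}_x(E)| > 0$. Accordingly, I would aim to bound $\int_{E'} \|\nu_x\|_{L^2}^2 \, d\mu(x)$ for some ``good'' subset $E' \subset E$ of positive $\mu$-measure and conclude by pigeonhole.

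After Fourier expansion, this $L^2$ norm becomes an integral over frequencies $\xi = (\xi_1,\ldots,\xi_\ell) \in \mathbb{R}^\ell$ whose amplitude factors as a product of spherical exponentials $e^{i\xi_i |x_i - y_i|}$, one per $\mathbb{R}^{d_i}$-component. I would dyadically localize the magnitudes $|\xi_i| \sim R_i$ and pigeonhole: by a convexity-type comparison the worst regime turns out to be the one in which the coordinate $i_0$ with $d_{i_0} = d_{\min}$ carries the largest frequency. There the problem essentially reduces to a weighted $L^2$ restriction estimate for the sphere in $\mathbb{R}^{d_{\min}}$, to which the Du--Zhang refined $L^2$ bound applies, producing the characteristic $\tfrac{d_{\min}}{2} - \tfrac{1}{4}$ gain, with the extra loss $\tfrac{1}{4d_{\min}}$ in the odd case coming from the known parity restriction in that estimate. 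The remaining $d - d_{\min}$ factor dimensions are absorbed trivially, yielding exactly the threshold in the theorem.

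The above $L^2$ estimate only handles a ``broad'' or non-concentrated case, in the spirit of the good/bad frame decompositions of \cite{GIOW, DIOWZ}. The main obstacle, and the place where a genuinely new ingredient is required, is the ``bad'' case: after wave packet decomposition, $\mu$ may concentrate along tube structures whose direction vectors $\tfrac{x_i - y_i}{|x_i - y_i|}$ in each factor cluster together. In the one-parameter setting such configurations are excluded by Orponen-type radial projection theorems, sharpened in \cite{GIOW, DIOWZ}. Here I would need a genuinely \emph{multiparameter} radial projection theorem, asserting that a Frostman measure on $\mathbb{R}^{d_1} \times \cdots \times \mathbb{R}^{d_\ell}$ of sufficiently large dimension cannot simultaneously radially project from a positive-$\mu$-measure set of centers $x$ onto small subsets of each sphere $S^{d_i - 1}$. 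Formulating the correct multiparameter statement and proving it, while leveraging the one-parameter radial projection theory in each individual coordinate yet exploiting the coupling of the factors through the shared base point $x$, is the step I expect to be the primary difficulty.

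Combining the good-case $L^2$ estimate with the ruling out of the bad case via the new multiparameter radial projection theorem yields $\int_{E'} \|\nu_x\|_{L^2}^2 \, d\mu(x) < \infty$ on a positive-measure set $E' \subset E$, and hence a pin $x \in E$ with $|\Delta^{\vec{d}}_x(E)| > 0$, finishing the proof.
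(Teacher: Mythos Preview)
Your overall architecture matches the paper's: a good/bad wave packet decomposition in the style of \cite{GIOW, DIOWZ}, with the bad part controlled by a new multiparameter radial projection theorem (which you correctly flag as the main new ingredient) and the good part handled by a weighted $L^2$ estimate. The paper indeed proves exactly such a multiparameter radial projection theorem (its Theorem \ref{thm: multipara proj0} and the more general Theorem \ref{thm: multipara proj}), and the final pigeonhole/Cauchy--Schwarz step is as you describe.

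However, your treatment of the good case has a real gap. You claim that after dyadic localization ``the problem essentially reduces to a weighted $L^2$ restriction estimate for the sphere in $\mathbb{R}^{d_{\min}}$'' and that ``the remaining $d-d_{\min}$ factor dimensions are absorbed trivially.'' This does not work: the Frostman measure $\mu$ and the weight it induces are not tensor products, so the multiparameter $L^2$ estimate does not factor into a one-parameter estimate in $\mathbb{R}^{d_{\min}}$ times something trivial in the other coordinates. The paper addresses this explicitly (see the discussion in \S1.1), and its cure is a genuinely \emph{multiparameter} refined decoupling theorem (its Theorem \ref{thm:lRefDec}) together with a multiparameter weighted extension estimate (Lemma \ref{lem:MWFE}) in which all $\ell$ scales $R_{j_1},\ldots,R_{j_\ell}$ appear simultaneously and the ``good'' condition is imposed on \emph{product} tubes $T_1\times\cdots\times T_k$. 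The pigeonholing that makes refined decoupling work cannot be done variable-by-variable without a super-logarithmic loss; one must pigeonhole once over the product structure.

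There is also a tool mismatch: what you call ``the Du--Zhang refined $L^2$ bound'' is the fractal $L^2$ restriction estimate underlying the Mattila-integral approach of \cite{DGOWWZ18, DZ18}, which in one parameter gives the \emph{different} odd-dimensional threshold $\tfrac{d}{2}+\tfrac14+\tfrac{1}{8d-4}$. The threshold $\tfrac14+\tfrac{1}{4d_{\min}}$ you are aiming for comes instead from refined \emph{decoupling} at the endpoint $p=\tfrac{2(d_{\min}+1)}{d_{\min}-1}$, as in \cite{GIOW, DIOWZ}. The paper notes that extending the Du--Zhang/Mattila-integral route to the multiparameter setting is open. Finally, the paper works with \emph{two} Frostman measures $\mu_1,\mu_2$ on subsets $E_1,E_2\subset E$ whose projections onto certain subspaces of each $\mathbb{R}^{d_i}$ are separated (Lemma \ref{lem: E1E2}); this separation is used both in applying the radial projection theorem and in controlling tube overlaps, and is not visible in your single-measure formulation.
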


The assumption $d_i\geq 2$ in the multiparameter Falconer problem is necessary. In fact, if there is some $d_i=1$, then there are examples showing that in such cases no nontrivial threshold can be obtained. To see this, assume that $d_1=1$, then there exists Cantor type set $E_1\subset \mathbb{R}^{d_1}$ with arbitrarily large dimension such that $|\Delta(E_1)|_1=0$. By considering the set $E=E_1\times B_1^{d_2}\times\cdots\times B_1^{d_\ell}$, where $B_1^{d_i}$ denotes the unit ball centered at the origin in $\mathbb{R}^{d_i}$, one can construct set in $\mathbb{R}^d$ with arbitrarily large dimension but satisfies $|\Delta^{\vec{d}}(E)|_{\ell}=0$.

In the case that $\ell=1$, Theorem \ref{thm: main1} recovers the best known result towards the Falconer conjecture when $d$ is even, originally proved in \cite{GIOW, DIOWZ}. When $d$ is odd, the conclusion given by Theorem \ref{thm: main1} is inferior to the state-of-the-art result (dimensional threshold $\frac{d}{2}+\frac{1}{4}+\frac{1}{8d-4}$) towards the Falconer conjecture, proved in \cite{DGOWWZ18, DZ18}. Both of these two works are based on Mattila's framework which reduces the Falconer problem to the spherical decay estimate of Fourier transform of measures \cite{Mat85}, and it remains to be understood whether a similar reduction can be achieved via the multiparameter Mattila integral derived in \cite{HIR}.

In general, if the threshold $d-\frac{d_{\min}}{2}+\delta_0$ can be obtained for the multiparameter problem for some $\ell\geq 2$ (in other words, 
for all compact $\tilde{E}\subset \mathbb{R}^d$, ${\rm dim}(\tilde{E})>d-\frac{d_{\min}}{2}+\delta_0$ implies that $|\Delta^{\vec{d}}(\tilde{E})|_{\ell}>0$), then, the threshold $\frac{d}{2}+\delta_0$ can be obtained for the original Falconer problem, by considering the set $\tilde{E}:=E\times B_1^{d}\times\cdots\times B_1^d\subset \mathbb{R}^{d\ell}$. Indeed, it is direct to see that $|\Delta(E)|_1>0$ if and only if $|\Delta^{\vec{d}}(\tilde{E})|_\ell>0$, and ${\rm dim}(E)>\frac{d}{2}+\delta_0$ implies that ${\rm dim}(\tilde{E})>d\ell-\frac{d}{2}+\delta_0$.

Compared to the original one-parameter distance set, the multiparameter distance set captures the non-isotropic geometric information of the set. For instance, consider two compact subsets in $\mathbb{R}^2\times\mathbb{R}^2$: $E_1=[0,1]^2\times \{(0,0)\}$ and $E_2=[0,1]^2\times [0,1]^2$. Apparently, both $|\Delta(E_1)|_1$, $|\Delta(E_2)|_1$ are positive. However, one can see that $|\Delta^{(2,2)}(E_1)|_2=0$ while $|\Delta^{(2,2)}(E_2)|_2>0$.

In addition, we point out that the multiparameter distance problem in the continuous setting is of a genuine multiparameter nature, and doesn't seem to be approachable using a tensor product type argument. This is significantly different from the discrete setting, where one can use distance results with smaller number of parameters as blackboxes to study distance problems with more parameters. Indeed, this is exactly the path taken in \cite{IJP17}, where the authors applied the Guth--Katz distinct distance result as a blackbox in the study of multiparameter distinct distance problems.

At first sight, such a strategy might seem to work in the continuous case as well. For example, given a set $E\subset \mathbb{R}^{2}\times \mathbb{R}^2$ with ${\rm dim}(E)>4-\frac{2}{2}+\frac{1}{4}=3+\frac{1}{4}$, define $E_{x}=\{y\in \mathbb{R}^2:\, (x,y)\in E\}$, $\forall x\in \mathbb{R}^2$. We call $E_x$ a good fiber if it satisfies ${\rm dim}(E_x)>\frac{5}{4}$. Suppose one could show that $\{x:\, E_x \text{ good}\}$ has Hausdorff dimension larger than $\frac{5}{4}$, then a Fubini type argument, combined with the two dimensional Falconer result (dimensional threshold $\frac{5}{4}$) of \cite{GIOW}, would imply that $|\Delta^{(2,2)}(E)|_2>0$. However, this doesn't hold true in general. In fact, there exists example of a compact set with Hausdorff dimension $2$ in the plane that is a graph over an uncountable set of directions (see \cite{DF} for such a construction). One can use this to easily build a set $E\subset \mathbb{R}^2\times \mathbb{R}^2$ with arbitrarily large dimension that doesn't satisfy the good fiber property. This seems to be a manifestation of the fact that Hausdorff dimension does not always behave well when forming Cartesian products, and is a key difference between the continuous and discrete versions of the multiparameter distance problem.

\subsection{Strategy and new difficulties in the multiparameter setting}

The strategy of the proof of Theorem \ref{thm: main1} is inspired largely by \cite{GIOW, DIOWZ}, where the authors studied the original Falconer conjecture (i.e. $\ell=1$) in even dimensions. Here is a sketch of the main framework. Fix a set $E$ with dimension larger than $\alpha$, one starts with a choice of two disjoint subsets $E_1, E_2\subset E$ each of which supports a Frostman measure of exponent $\alpha$, denoted by $\mu_1, \mu_2$ respectively. Then, in Step 1, one prunes the measure $\mu_1$ to get a new complex valued measure $\mu_{1,g}$ and shows that their $L^1$ error is small using a radial projection argument; in Step 2, via a weighted restriction estimate, one shows that an $L^2$ quantity involving $\mu_{1,g}$ and $\mu_2$ is finite by using a refined decoupling estimate proved in \cite{GIOW}, which then implies the desired result through an identity of Liu \cite{Liu18}. A particularly nice feature of this strategy is that it deduces the stronger pinned distance result.

Unfortunately, neither of the two steps described above can be iterated directly in the multiparameter setting, mainly because the slices of a Frostman measure are in general not necessarily still Frostman measures satisfying desirable energy estimates. More precisely, an iteration of the pruning process of the Frostman measure in Step 1 doesn't seem to produce a measure that is sufficiently nice. And since the weight function, generated by the Frostman measure, is not necessarily a tensor product, the weighted restriction estimate in Step 2 doesn't follow from iterating the one-parameter argument. In fact, even without the presence of the weight, one doesn't seem to be able to iterate the refined decoupling inequality in Step 2. The reason is that the application of the refined decoupling requires a dyadic pigeonholing process, which, when performed in each variable separately in the multiparameter setting, would yield a loss much worse than $\log$.

In this article, we overcome the difficulties in Step 1 by studying multiparameter versions of radial projections of fractal measures. In particular, we extend the radial projection theorem of Orponen \cite{O17b} to the multiparameter setting, which allows us to identify an effective way to prune the Frostman measure in Step 1. More precisely, the multiparameter radial projection theorem will be used to remove product bad wave packets from the Frostman measure. To the best of our knowledge, such a theorem seems to be new, and is expected to be of independent interest. See below for further discussions on the theorem and sample applications to the visibility of sets. In addition, we prove new multiparameter weighted restriction estimate to tackle the difficulties arising in Step 2, which relies on a newly observed multiparameter refined decoupling theorem. 


\subsection{Multiparameter radial projection and application to visibility of sets}

As the statement of the full version of the multiparameter radial projection theorem (Theorem \ref{thm: multipara proj}) is quite technical, here we only state a special case of it (``the large $\alpha$ case''). 

We first introduce some notation. Let $d=d_1+\cdots d_\ell$ and $d_i\geq 2$, $\ell\geq 1$. Denote by $\pi_i:\mathbb{R}^d\to\mathbb{R}^{d_i}$ the orthogonal projection. Given $y=(y_1,\cdots,y_\ell)\in \mathbb{R}^{d_1}\times\cdots\times\mathbb{R}^{d_\ell}=\mathbb{R}^d$, define the $\ell$-parameter radial projection $P^{(\ell)}_y: \mathbb{R}^d \setminus \{x:x_i=y_i \text{ for some } i=1,\cdots,\ell\}\to S^{d_1-1}\times \cdots \times S^{d_\ell-1}$ by
$$
P^{(\ell)}_y (x) = \left(\frac{x_1-y_1}{|x_1-y_1|},\cdots, \frac{x_\ell-y_\ell}{|x_\ell-y_\ell|}\right)\,.
$$ In the following, $\mathcal{M}(\mathbb{R}^d)$ denotes the space of compactly supported Radon measures on $\mathbb{R}^d$.

\begin{theorem}\label{thm: multipara proj0}
For every $\alpha>d-1$ and $\beta>2(d-1)-\alpha$, there exists $p=p(\alpha,\beta)>1$ such that the following holds. Suppose that $\mu, \nu \in \mathcal{M}(\mathbb{R}^d)$ satisfy 
\begin{enumerate}
\item $\pi_i({\rm supp}\,\mu) \cap \pi_i({\rm supp}\,\nu) = \varnothing$, $\forall i=1,\ldots, \ell$;
\item $\mu(B(x,r))\leq  C_\alpha(\mu) r^\alpha$, $\nu(B(x,r))\leq C_\beta(\nu)r^\beta$, $\forall x\in \mathbb{R}^d$, $\forall r>0$. 
\end{enumerate}
Then,  
\[
\int \|P^{(\ell)}_y\mu\|^p_{L^p(S^{d_1-1}\times\cdots\times S^{d_{\ell}-1})}\,d\nu(y) \lesssim C_\alpha(\mu)^p C_\beta(\nu),
\]
where the implicit constant depends only on $d$ and the diameter of ${\rm supp}\,\mu \cup {\rm supp}\,\nu$.
\end{theorem}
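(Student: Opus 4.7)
The plan is to prove Theorem \ref{thm: multipara proj0} by induction on the number of parameters $\ell$. The base case $\ell = 1$ is Orponen's one-parameter radial projection theorem. The key structural observation enabling the induction is that $P_y^{(\ell)}$ factors as a product of one-parameter radial projections in each coordinate block, which yields a natural slicing identity via polar coordinates.

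For the inductive step, write $y = (y_1, y')$ with $y_1 \in \mathbb{R}^{d_1}$ and $y' \in \mathbb{R}^{d_2+\cdots+d_\ell}$. Define, distributionally, the sliced measure $\tilde\mu^{y_1,\omega_1}$ on $\mathbb{R}^{d_2+\cdots+d_\ell}$ by integrating $\mu$ along the half-ray $y_1 + \mathbb{R}_+\omega_1$ in the first factor, weighted by the Jacobian $r^{d_1-1}$. Polar coordinates in the first factor yield
\[
\|P_y^{(\ell)}\mu\|_{L^p(\prod_i S^{d_i-1})}^p = \int_{S^{d_1-1}}\|P_{y'}^{(\ell-1)}\tilde\mu^{y_1,\omega_1}\|_{L^p(\prod_{i\geq 2}S^{d_i-1})}^p\,d\omega_1.
\]
Disintegrating $d\nu(y) = d\nu_{y_1}(y')\,d\bar\nu(y_1)$ along the first factor and applying the inductive hypothesis for each fixed $(y_1,\omega_1)$ would give
\[
\int\|P_{y'}^{(\ell-1)}\tilde\mu^{y_1,\omega_1}\|_{L^p}^p\,d\nu_{y_1}(y')\lesssim C_{\alpha'}(\tilde\mu^{y_1,\omega_1})^p\cdot C_{\beta'}(\nu_{y_1}),
\]
for appropriate $\alpha' > d_2 + \cdots + d_\ell - 1$ and $\beta' > 2(d_2+\cdots+d_\ell - 1) - \alpha'$. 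The hypothesis $\alpha > d - 1$ guarantees that the generic slice exponent, approximately $\alpha - (d_1 - 1)$, comfortably exceeds $d_2 + \cdots + d_\ell - 1$, with an analogous statement for $\nu$; and the disjoint projection hypothesis in each factor ensures $\supp\tilde\mu^{y_1,\omega_1}$ and $\supp\nu_{y_1}$ remain projection-disjoint in the remaining $\ell - 1$ factors, so the inductive hypothesis indeed applies.

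The main obstacle will be controlling the averages $\int C_{\alpha'}(\tilde\mu^{y_1,\omega_1})^p\,d\omega_1$ and $\int C_{\beta'}(\nu_{y_1})\,d\bar\nu(y_1)$ uniformly in terms of $C_\alpha(\mu)$ and $C_\beta(\nu)$, since Marstrand--Mattila type slicing theorems only give a.e.~statements, not the required quantitative control. To avoid this, I would instead discretize $\mu$ and $\nu$ at a small scale $\delta > 0$, dyadically decompose according to the annular product shells $|x_i - y_i| \sim 2^{-k_i}$ in each factor, and express
\[
\|P_y^{(\ell)}\mu\|_{L^p}^p \sim \sum_{\vec k}\prod_i(\delta\cdot 2^{k_i})^{-(d_i-1)(p-1)}\sum_{T\in\mathbb{T}_{\vec k}(y)}\mu(T)^p,
\]
where $\mathbb{T}_{\vec k}(y)$ is the collection of product $\delta$-tubes $T = T_1\times\cdots\times T_\ell$ based at $y$ with $T_i$ of length $\sim 2^{-k_i}$ in the $i$-th factor. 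This reduces the theorem to the multiparameter product-tube estimate
\[
\int\sum_{T\in\mathbb{T}_{\vec k}(y)}\mu(T)^p\,d\nu(y) \lesssim C_\alpha(\mu)^p C_\beta(\nu)\cdot \delta^{(d-\ell)(p-1)}\prod_i 2^{k_i(d_i-1)(p-1)},
\]
whose scaling balances exactly with the factors in the dyadic sum. This tube estimate is itself amenable to induction on $\ell$: one applies Orponen's one-parameter bound to the first factor (applied to suitable conditional slices of $\mu$) and dispatches the remaining factors via Fubini and the inductive hypothesis. Since the supports of $\mu$ and $\nu$ are bounded and projection-disjoint in each factor, the distances $|x_i - y_i|$ stay in a bounded positive range, so only $O(1)$ annular shells $\vec k$ contribute and summing produces no logarithmic loss.
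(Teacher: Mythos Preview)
Your inductive strategy has a genuine gap, and it is precisely the one you yourself flagged: slices and conditional measures of a Frostman measure do not inherit a uniform Frostman constant. You correctly note this obstacle for the continuous slicing $\tilde\mu^{y_1,\omega_1}$, but your discretized fix does not escape it. The product-tube estimate you write down is exactly the right intermediate statement, but your proposed proof of it---``apply Orponen's one-parameter bound to the first factor (applied to suitable conditional slices of $\mu$) and dispatch the remaining factors via Fubini and the inductive hypothesis''---requires, at either the outer or inner step of the induction, a measure of the form $A\mapsto \mu(A\times T')$ (or $\mu(T_1\times A')$), and there is simply no uniform $\alpha'$-Frostman bound on such an object. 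For any individual slice the constant can be arbitrarily bad, and since the estimate is $L^p$ with $p>1$, averaging over slices goes the wrong way by Jensen. Without an additional idea, the induction does not close.

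The paper proceeds entirely differently and never slices. It converts the multiparameter radial projection to a product of orthogonal projections $\pi_e=\pi_{e_1}\times\cdots\times\pi_{e_\ell}$ via the identity $\int\|P_y^{(\ell)}\mu_y\|_{L^p}^p\,d\nu(y)=\int\|\pi_e\mu\|_{L^p(\pi_e\nu)}^p\,d\mathcal{H}^{d-\ell}(e)$ (Lemma~\ref{projlem1}), then runs a duality and Cauchy--Schwarz argument on the Fourier side. The crucial point is that the resulting quantities are \emph{multiparameter energies} of the form $\iint\prod_i|x_i-y_i|^{-t_i}\,d\mu(x)\,d\mu(y)$, and these can be bounded directly from the \emph{full} $\alpha$-Frostman condition on $\mathbb{R}^d$ (Lemma~\ref{projlem3}) without ever passing to a slice. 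Indeed, the ball condition yields the product-box bound $\mu(B^{d_1}(x_1,r_1)\times\cdots\times B^{d_\ell}(x_\ell,r_\ell))\lesssim C_\alpha(\mu)\prod_i r_i^{\alpha-d+d_i}$ by an elementary covering, and this is what drives the whole argument. If you want to salvage your tube estimate, this multiparameter energy bound is the missing ingredient: it gives you a direct (non-inductive) route that bypasses slicing altogether.
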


As a remark, when $\ell=1$, Theorem \ref{thm: multipara proj0} is simply a weaker version of Orponen's radial projection theorem \cite{O17b}, where the ball condition (2) in the above is replaced by a weaker energy condition for the measures. This stronger ball condition is used in an essential way in our argument to estimate a multiparameter analogue of the energy of a measure. 

Theorem \ref{thm: multipara proj0} is sharp up to the endpoint, in the sense that the assumption $\alpha>d-1$ cannot be further relaxed.  Indeed, this is because the condition is already required in the one-parameter case. To see the sharpness, consider the following $\ell$-parameter example: $\mu=\mu_1\times m_2\chi_{B_1^{d_2}}\times\cdots\times m_\ell\chi_{B_1^{d_\ell}}$, where $m_i$ denotes the Lebesgue measure on $\mathbb{R}^{d_i}$. The problem obviously splits into a tensor product of $\ell$ one-parameter problems. Consider an example of $\mu_1$ satisfying $\mathcal{H}^{d_1-1}(\text{supp}\,\mu_1)=0$ and
\[
\sup\{\alpha_1:\, \mu_1(B(x,r))\lesssim r^{\alpha_1},\,\forall x\in \mathbb{R}^{d_1},\forall r>0\}= d_1-1,
\]where $\mathcal{H}^{d_1-1}$ denotes the ($d_1-1$)-dimensional Hausdorff measure. Then, one has that $\mu$ satisfies the ball condition as in (2) of Theorem \ref{thm: multipara proj0} for any $\alpha<d-1$. For all $y_1\in \mathbb{R}^{d_1}$ outside the support of $\mu_1$, $\forall p>1$, one obviously has $\|P_{y_1}^{(1)}\mu_1\|_{L^p(S^{d_1-1})}=\infty$. Hence, $\|P_{y}^{(\ell)}\mu\|_{L^p(S^{d_1-1}\times \cdots\times S^{d_\ell-1})}=\infty$, $\forall y\in \mathbb{R}^d$ with $y_i\notin \pi_i({\rm supp}\,\mu)$, $i=1,\cdots, \ell$.

When $\alpha\leq d-1$, we prove a more involved version of Theorem \ref{thm: multipara proj0} in Section \ref{sec: proj} (Theorem \ref{thm: multipara proj}) that incorporates multiparameter orthogonal projections in the integral. Compared to the one-parameter setting, where the energy of the measure is well preserved under the orthogonal projection onto almost every subspace (of an appropriate dimension) according to the Marstrand projection theorem \cite{KM}, the energy of the measures under multiparameter orthogonal projections usually does not behave so well. This is one of the main difficulties one is faced with in extending Theorem \ref{thm: multipara proj0} to Theorem \ref{thm: multipara proj}. This is also one of the reasons why one would need to work with multiparameter analogue of the energy in the proof of both theorems.

As an application, our multiparameter radial projection theorem can be used to study (in-)visibility of sets in the multiparameter setting. More precisely, let $K\subset \mathbb{R}^d=\mathbb{R}^{d_1}\times \cdots \times \mathbb{R}^{d_\ell}$ be a Borel set, where $d_i\geq 2$, $\ell \geq 1$ as before. We say $K$ is \emph{$\ell$-parameter invisible} from $x\in \mathbb{R}^d$ if 
\[
\mathcal{H}^{d-\ell}(P^{(\ell)}_x(K\setminus \{y:x_i=y_i \text{ for some } i=1,\cdots,\ell\}))=0,
\]where $\mathcal{H}^{d-\ell}:=\mathcal{H}^{d_1-1}\big|_{S^{d_1-1}}\times \cdots\times \mathcal{H}^{d_\ell-1}\big|_{S^{d_\ell-1}}$. The set $K$ is said to be \emph{$\ell$-parameter visible} from $x$ if it is not $\ell$-parameter invisible from $x$. Define
\[
{\rm Inv}^{(\ell)}(K)=\{x\in \mathbb{R}^d:\, K \text{ is invisible from }x\}.
\]The basic question is to determine how large the set ${\rm Inv}^{(\ell)}(K)$ can be. The study of invisibility is a classical problem in geometric measure theory. For instance, if $d-1<{\rm dim}(K)\leq d$, Mattila and Orponen \cite{MO, O2} have proved the sharp estimate ${\rm dim}({\rm Inv}^{(1)}(K))\leq 2(d-1)-{\rm dim}(K)$. See Mattila's survey \cite[Section 6]{M2} for a more detailed introduction to this line of research.

A corollary of our Theorem \ref{thm: multipara proj0} is the following sharp estimate. 

\begin{corollary}\label{cor1}
Assume that the Borel set $K\subset \mathbb{R}^d=\mathbb{R}^{d_1}\times \cdots \times \mathbb{R}^{d_\ell}$ satisfies ${\rm dim}(K)>d-1$, where $d_i\geq 2$, $\ell\geq 2$. Then there holds
\begin{equation}\label{eqn: cor1}
{\rm dim}({\rm Inv}^{(\ell)}(K))\leq 2(d-1)-{\rm dim}(K).
\end{equation}
\end{corollary}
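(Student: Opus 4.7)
The plan is to argue by contradiction using Theorem \ref{thm: multipara proj0}. First, I note that the estimate \eqref{eqn: cor1} is automatic when $\dim(\mathrm{Inv}^{(\ell)}(K)) \leq d - d_{\min}$: combining $\dim(K) \leq d$ with $d_{\min} \geq 2$ yields $2(d-1) - \dim(K) \geq d - 2 \geq d - d_{\min}$. So I assume $\dim(\mathrm{Inv}^{(\ell)}(K)) > d - d_{\min}$ and suppose for contradiction that $\dim(\mathrm{Inv}^{(\ell)}(K)) > 2(d-1) - \dim(K)$. I then choose parameters
\[
d - 1 < \alpha < \dim(K), \qquad \max\bigl(d - d_{\min},\; 2(d-1) - \alpha\bigr) < \beta < \dim(\mathrm{Inv}^{(\ell)}(K)),
\]
and, by Frostman's lemma, nonzero compactly supported Radon measures $\mu$ on $K$ and $\nu$ on $\mathrm{Inv}^{(\ell)}(K)$ satisfying $\mu(B(x, r)) \lesssim r^\alpha$ and $\nu(B(y, r)) \lesssim r^\beta$.

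To apply Theorem \ref{thm: multipara proj0} I must enforce the disjointness $\pi_i(\mathrm{supp}\,\mu) \cap \pi_i(\mathrm{supp}\,\nu) = \varnothing$ for every $i$, which I will achieve by localizing. Fix $\epsilon > 0$ small and partition $\mathbb{R}^d$ into product boxes $Q_{\vec j} = Q^1_{j_1} \times \cdots \times Q^\ell_{j_\ell}$ of side length $\epsilon$. Since only $O(\epsilon^{-d})$ such boxes meet $\mathrm{supp}\,\mu$, pigeonholing produces $\vec{j}^\ast$ with $\mu(Q_{\vec j^\ast}) \gtrsim \epsilon^{d}$. I define $\mu' := \mu|_{Q_{\vec j^\ast}}$ and
\[
\nu' := \nu\bigl|_{\{y \,:\, \mathrm{dist}(\pi_i(y), Q^i_{j_i^\ast}) > \epsilon \text{ for every } i\}}.
\]
A standard cylindrical covering, using the Frostman bound on $\nu$ and the boundedness of $\mathrm{supp}\,\nu$, shows
\[
\nu\bigl(\pi_i^{-1}(2 Q^i_{j_i^\ast})\bigr) \lesssim \epsilon^{\beta - d + d_i} \leq \epsilon^{\beta - d + d_{\min}},
\]
which tends to $0$ as $\epsilon \to 0$ precisely because $\beta > d - d_{\min}$. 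Summing in $i$ and taking $\epsilon$ small enough gives $\nu'(\mathbb{R}^d) \geq \tfrac{1}{2}\nu(\mathbb{R}^d) > 0$, while by construction $\mu', \nu'$ inherit the Frostman bounds of $\mu, \nu$ and satisfy the required projection-disjointness.

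Applying Theorem \ref{thm: multipara proj0} to $(\mu', \nu')$, legitimate because $\alpha > d - 1$ and $\beta > 2(d-1) - \alpha$, yields $p > 1$ with
\[
\int \|P^{(\ell)}_y \mu'\|_{L^p(S^{d_1-1} \times \cdots \times S^{d_\ell-1})}^p\, d\nu'(y) < \infty,
\]
so for $\nu'$-a.e.\ $y$ the pushforward $(P^{(\ell)}_y)_\ast \mu'$ is absolutely continuous with respect to $\mathcal{H}^{d-\ell}$. On the other hand every $y \in \mathrm{supp}\,\nu' \subset \mathrm{Inv}^{(\ell)}(K)$ contradicts this: since $\alpha > d - 1 \geq d - d_i$, the Frostman bound forces $\mu'(\{x : x_i = y_i\}) = 0$ for every $i$, so $(P^{(\ell)}_y)_\ast \mu'$ has total mass $\mu'(\mathbb{R}^d) > 0$ yet is supported on $P^{(\ell)}_y(\mathrm{supp}\,\mu' \setminus \bigcup_i \{x_i = y_i\}) \subset P^{(\ell)}_y(K \setminus \bigcup_i \{x_i = y_i\})$, a set of $\mathcal{H}^{d-\ell}$-measure zero by the invisibility of $K$ from $y$. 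Hence the pushforward is singular with respect to $\mathcal{H}^{d-\ell}$, contradicting the almost-everywhere absolute continuity. The main technical obstacle is arranging the multiparameter disjointness hypothesis of Theorem \ref{thm: multipara proj0}, which has no analogue in the one-parameter case; the cylindrical covering estimate that makes the pigeonhole succeed is exactly what necessitates the preliminary reduction to the regime $\beta > d - d_{\min}$.
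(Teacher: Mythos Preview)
Your argument is correct and takes a genuinely different route from the paper's. The paper first derives Corollary~\ref{cor2} from Theorem~\ref{thm: multipara proj0}, then splits ${\rm Inv}^{(\ell)}(K)$ according to the set ${\rm Cross}(K)=\{x:\exists\,i,\,\exists\,y\in K,\,x_i=y_i\}$: points outside ${\rm Cross}(K)$ land in $S^{(\ell)}(\mu)$ directly, while to handle ${\rm Inv}^{(\ell)}(K)\cap{\rm Cross}(K)$ the paper invokes Lemma~\ref{lem: E1E2} repeatedly to produce $\ell+1$ subsets $K_1,\ldots,K_{\ell+1}\subset K$ with pairwise separated projections in every factor, so that each $x$ avoids ${\rm Cross}(K_j)$ for at least one $j$. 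Your approach instead sidesteps ${\rm Cross}(K)$ entirely: you localize $\mu$ to a single small product box by pigeonholing, then excise from $\nu$ the thin cylinders over that box, using the Frostman bound on $\nu$ with exponent $\beta>d-d_{\min}$ to guarantee the excised mass is negligible. This is more direct---it does not pass through Corollary~\ref{cor2} and does not need the iterated splitting of Lemma~\ref{lem: E1E2}---at the cost of introducing the auxiliary threshold $d-d_{\min}$ and the preliminary case distinction. The paper's route has the advantage that it isolates Corollary~\ref{cor2} as a result of independent interest; your route has the advantage of being self-contained and arguably more elementary. One minor remark: in your pigeonholing you only need $\mu(Q_{\vec j^\ast})>0$, not the quantitative lower bound $\gtrsim\epsilon^d$, since Theorem~\ref{thm: multipara proj0} is applied qualitatively.
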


As far as the authors are aware of, this seems to be the first estimate of this kind for visibility of sets in the multiparameter setting. In addition, this estimate is sharp. Indeed, let $K_1\subset \mathbb{R}^{d_1}$ be a sharp example for Mattila--Orponen's estimate in the one-parameter setting. Define $K=K_1\times B_1^{d_2+\cdots+d_\ell}$. Then one can easily check that $K$ is a sharp example for (\ref{eqn: cor1}).

Moreover, we have the following slightly stronger corollary, concerning the analogue of the above for measures. Let $\mathbb{R}^d=\mathbb{R}^{d_1}\times \cdots\times \mathbb{R}^{d_\ell}$ be the same as before, $d_i\geq 2$, $\ell\geq 2$.

\begin{corollary}\label{cor2}
Let $\mu\in\mathcal{M}(\mathbb{R}^d)$ satisfy for some $\alpha>d-1$ that
\[
\mu(B^d(x,r))\lesssim r^\alpha,\quad \forall x\in \mathbb{R}^d,\, r>0.
\]Define the set
\[
\begin{split}
S^{(\ell)}(\mu)=&\{x\in \mathbb{R}^d:\, x_i\neq y_i,\, \forall y\in {\rm supp}\mu, \, i=1,\cdots,\ell,\\
&\quad P^{(\ell)}_x\mu \text{ is not absolutely continuous w.r.t. } \mathcal{H}^{d-\ell}\}. 
\end{split}
\]Then there holds 
\[
{\rm dim}(S^{(\ell)}(\mu))\leq 2(d-1)-\alpha.
\]
\end{corollary}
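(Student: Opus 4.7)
The plan is to apply Theorem \ref{thm: multipara proj0} and argue by contradiction. Suppose $\dim S^{(\ell)}(\mu) > 2(d-1) - \alpha$, and choose $\beta$ with
\[
2(d-1) - \alpha < \beta < \dim S^{(\ell)}(\mu).
\]
The goal is to produce a Frostman measure $\nu$ of exponent $\beta$ essentially supported on $S^{(\ell)}(\mu)$ and eligible for Theorem \ref{thm: multipara proj0}, and then derive a contradiction from the resulting $L^p$-finiteness of $P^{(\ell)}_y \mu$ for $\nu$-a.e.\ $y$.

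The main piece of bookkeeping is to arrange the disjointness hypothesis~(1) of Theorem \ref{thm: multipara proj0}, which is not automatic from the mere coordinate-wise non-equality condition built into $S^{(\ell)}(\mu)$. For each $k \in \mathbb{N}$, I would set
\[
S_k := \{x \in S^{(\ell)}(\mu) : \mathrm{dist}(x_i, \pi_i(\supp\mu)) \geq 2^{-k},\ \forall\, i = 1, \ldots, \ell\}.
\]
Since $\supp\mu$ is compact and every $x \in S^{(\ell)}(\mu)$ has $x_i \notin \pi_i(\supp\mu)$ for each $i$, we have $S^{(\ell)}(\mu) = \bigcup_k S_k$. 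Countable stability of Hausdorff dimension then yields some $k_0$ with $\dim S_{k_0} > \beta$. Frostman's lemma (after a standard measurability reduction, e.g.\ to an analytic subset carrying positive $\mathcal{H}^\beta$ content) produces $\nu \in \mathcal{M}(\mathbb{R}^d)$ with $\nu(S_{k_0}) > 0$, $\supp\nu \subset \overline{S_{k_0}}$, and $\nu(B(x,r)) \lesssim r^\beta$. The closed separation condition defining $S_{k_0}$ survives in the closure, so $\pi_i(\supp\nu) \cap \pi_i(\supp\mu) = \varnothing$ for every $i$.

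Since $\alpha > d - 1$ and $\beta > 2(d-1) - \alpha$, Theorem \ref{thm: multipara proj0} applies to the pair $(\mu,\nu)$ and gives some $p > 1$ with
\[
\int \|P^{(\ell)}_y \mu\|^p_{L^p(S^{d_1-1} \times \cdots \times S^{d_\ell - 1})}\, d\nu(y) \lesssim C_\alpha(\mu)^p\, C_\beta(\nu) < \infty.
\]
Hence $\|P^{(\ell)}_y \mu\|_{L^p(\mathcal{H}^{d-\ell})} < \infty$ for $\nu$-a.e.\ $y$, which forces $P^{(\ell)}_y \mu$ to be absolutely continuous with respect to $\mathcal{H}^{d-\ell}$ for such $y$ (the density being in $L^p$). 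But $\nu(S^{(\ell)}(\mu)) \geq \nu(S_{k_0}) > 0$, and by the very definition of $S^{(\ell)}(\mu)$, $P^{(\ell)}_y \mu$ is not absolutely continuous for any $y \in S^{(\ell)}(\mu)$, a contradiction. No step is seriously hard; the only mild obstacle is the decomposition engineering the projected-support disjointness, after which the statement follows directly from Theorem \ref{thm: multipara proj0}, in the same spirit as Corollary \ref{cor1}.
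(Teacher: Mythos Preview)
Your proof is correct and follows essentially the same approach as the paper: both deduce the corollary from Theorem~\ref{thm: multipara proj0} via a Frostman-measure contradiction, using that $\|P^{(\ell)}_y\mu\|_{L^p}<\infty$ forces absolute continuity. Your version is slightly more direct in that you apply Theorem~\ref{thm: multipara proj0} as a black box (obtaining some $p>1$) rather than tracking the admissible range of $p$ and letting $p\to 1$ as the paper does, and you are more explicit about the decomposition $S_k$ needed to secure the projected-support separation hypothesis, which the paper's sketch leaves implicit.
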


The $\ell=1$ case of this (sharp) estimate is proved by Orponen \cite{O17b}. And a similar argument as above shows that this bound in the $\ell$-parameter setting, $\forall \ell\geq 2$, is sharp as well. Both Corollary \ref{cor1} and \ref{cor2} are directly implied by Theorem \ref{thm: multipara proj0}. For the sake of completeness, a justification is provided in Appendix \ref{appendix: cor}.

\subsection{Structure of the paper and notations}
We provide an outline of the proof in Section \ref{sec: outline} and discuss the pruning process of the Frostman measures in Section \ref{sec: bad}. Section \ref{sec: proj} is devoted to the justification of the multiparameter radial projection theorems (Theorem \ref{thm: multipara proj0} and \ref{thm: multipara proj}) which is independent of the rest of the article. Finally, we prove the multiparameter weighted restriction estimate in Section \ref{sec: dec} which will complete the proof of Theorem \ref{thm: main1}.

Throughout the article, we write $A\lesssim B$ if $A\leq CB$ for some absolute constant $C$; $A\sim B$ if $A\lesssim B$ and $B\lesssim A$; $A\lesssim_\epsilon B$ if $A\leq C_\epsilon B$ for all $\epsilon>0$; $A\lessapprox B$ if $A\leq C_\epsilon R^\epsilon B$ for any $\epsilon>0$, $R>1$.

For any $d\geq 1$, $B_1^d$ denotes the unit ball in $\mathbb{R}^d$ centered at the origin, and $B^{d}(x,r)$ denotes the ball in $\mathbb{R}^{d}$ centered at $x$ of radius $r$. For any $1\leq m\leq n$, $G(n,m)$ denotes the Grassmanian, the space of $m$-dimensional subspaces in $\mathbb{R}^n$. 

For a large parameter $R$, ${\rm RapDec}(R)$ denotes those quantities that are bounded by a huge (absolute) negative power of $R$, i.e. ${\rm RapDec}(R) \leq C_N R^{-N}$ for arbitrarily large $N>0$. Such quantities are negligible in our argument. Similarly, $\text{RapDec}(R_{j_{i_1}},\cdots, R_{j_{i_k}})$ denotes a quantity less than $\text{RapDec}(R_{j_{i_s}})$ for all $1\leq s\leq k$. We say a function is essentially supported in a region if (the appropriate norm of) the tail outside the region is ${\rm RapDec}(R)$ for the underlying parameter $R$.

\begin{acknowledgement} 
XD is supported by NSF DMS-2107729. YO is supported by NSF DMS-2042109. RZ is supported by the NSF grant DMS-1856541, DMS-1926686 and by the Ky Fan and Yu-Fen Fan Endowment Fund at the Institute for Advanced Study. We would like to thank Tuomas Orponen for pointing us to the example of Davies--Fast \cite{DF} and Alex Iosevich for helpful discussions over the course of the project. We are also grateful to the anonymous referee whose suggestions have greatly improved the accuracy and exposition of the article.
\end{acknowledgement}

\section{Proof of Theorem \ref{thm: main1}: outline}\label{sec: outline}
\setcounter{equation}0

Given $E\subset\mathbb{R}^d$ with $\text{dim}(E)>d-\frac{d_{\min}}{2}$, without loss of generality, assume $E$ is contained in the unit ball $B_1^d$. Let $\alpha\in (d-\frac{d_{\min}}{2}, {\rm dim}(E))$, we will construct two subsets $E_1, E_2\subset E$ each of which supports a probability measure $\mu_1$, $\mu_2$ such that
\[
\mu_i(B(x,r))\lesssim r^\alpha,\quad \forall x\in\mathbb{R}^d,\,\forall r>0.
\]The key properties $E_1, E_2$ have are that their images under a fixed collection of orthogonal projections remain well separated and the multiparameter radial projection theorem (Theorem \ref{thm: multipara proj}) applies. The exact construction of the two sets is explained at the end of the section.

Let $x\in E_2$ be any fixed point and let $d^x(y):=(|x_1-y_1|,\dots, |x_\ell-y_\ell|)$ be its induced multiparameter distance map determined by $\vec{d}=(d_1,\ldots, d_\ell)$. Then, the pushforward measure $d^x_\ast(\mu_1)$, defined as
\[
\int_{\mathbb{R}^\ell} \psi(t_1,\cdots,t_\ell)\,d^x_\ast(\mu_1)=\int_{E_1}\psi(|x_1-y_1|,\dots, |x_\ell-y_\ell|)\,d\mu_1(y),
\]is a natural measure that is supported on $\Delta^{\vec{d}}_x(E)$. 

We will construct another complex valued measure $\mu_{1,g}$ that is the \emph{good} part of $\mu_1$ with respect to $\mu_2$, and study how its pushforward under the map $d^x$ differs from $d^x_\ast(\mu_1)$. The main result we will prove is the following.

\begin{theorem}\label{thm: actual}
Let $d=d_1+\cdots +d_\ell$ with $d_i\geq 2$, and $E\subset \mathbb{R}^d$ be a compact set with $\text{dim}(E)>d-\frac{d_{\min}}{2}$. Then for any $\alpha\in (d-\frac{d_{\min}}{2}, {\rm dim}(E))$, there exist $E_1, E_2\subset E$ with ${\text dist}(E_1, E_2)\gtrsim 1$ such that the following are true. 
\begin{enumerate}
\item For $i=1,2$, $E_i$ has positive $\alpha$ dimensional Hausdorff measure and supports a probability measure $\mu_i$ satisfying $\mu_i(B(x,r))\lesssim r^\alpha$, $\forall x\in\mathbb{R}^d,\,\forall r>0$;
\item There exists a complex valued measure $\mu_{1,g}$ and a subset $E_2' \subset E_2$ so that $\mu_2(E_2') \ge 1 - \frac{1}{1000}$ and for each $x \in E_2'$,
\begin{equation}\label{eqn: mainest1}
\| d^x_*(\mu_1) - d^x_*(\mu_{1,g}) \|_{L^1} < \frac{1}{1000}.\end{equation}
\item If in addition, one assumes that
\[
\text{dim}(E)>\alpha>\begin{cases} d-\frac{d_{\min}}{2}+\frac{1}{4},& d_{\min} \text{ is even},\\ d-\frac{d_{\min}}{2}+\frac{1}{4}+\frac{1}{4d_{\min}},& d_{\min} \text{ is odd},\end{cases}
\]then 
\begin{equation}\label{eqn: mainest2}
\int_{E_2}  \| d^x_*(\mu_{1,g}) \|_{L^2}^2 d \mu_2(x) < + \infty. \end{equation}
\end{enumerate} 
\end{theorem}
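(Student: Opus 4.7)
The plan is to carry out a multiparameter analogue of the GIOW/DIOWZ framework, using Theorem \ref{thm: multipara proj0} (and the more technical Theorem \ref{thm: multipara proj}) as the main new input in place of Orponen's one-parameter radial projection theorem.

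For part (1), I would start from Frostman's lemma to obtain a probability measure $\mu$ on $E$ satisfying $\mu(B(x,r)) \lesssim r^\alpha$. To force the separation hypothesis $\pi_i(\text{supp}\,\mu_1)\cap\pi_i(\text{supp}\,\mu_2)=\varnothing$ needed to apply the multiparameter radial projection theorem, I would partition each factor $\mathbb{R}^{d_i}$ into small balls and pigeonhole $\ell$ times to select two product cells $Q_1, Q_2\subset\mathbb{R}^d$ of positive $\mu$-measure whose $\pi_i$-images are disjoint for every $i$. Setting $E_j = E\cap Q_j$ and $\mu_j = \mu|_{E_j}/\mu(Q_j)$, the Frostman bound is preserved and $\mathrm{dist}(E_1,E_2)\gtrsim 1$. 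Choosing the cells from a fat Frostman subset (e.g.\ by throwing out points where $\mu$ locally looks lower-dimensional) ensures each $E_j$ has positive $\alpha$-dimensional Hausdorff measure.

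For part (2), I would introduce a product wave packet decomposition at a large scale $R$: write $\mu_1 = \sum_T \mu_{1,T}$, where $T = T_1\times\cdots\times T_\ell$ ranges over products of $R^{-1/2}$-tubes, one in each factor. Each tube $T_i$ is associated to a direction $\theta_i\in S^{d_i-1}$, so $T$ corresponds to a cap in $S^{d_1-1}\times\cdots\times S^{d_\ell-1}$. Call $T$ \emph{bad} (relative to $\mu_2$) if the corresponding cap on the product of spheres is one on which the radial projection $P_y^{(\ell)}\mu_1$ is abnormally large for many $y\in\mathrm{supp}\,\mu_2$, and define $\mu_{1,g} := \sum_{T\notin\mathbb{T}_{\mathrm{bad}}} \mu_{1,T}$. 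Then for each $x\in E_2$ the $L^1$ error $\|d^x_*(\mu_1)-d^x_*(\mu_{1,g})\|_{L^1}$ is, up to Schwartz tails, dominated by the $\mu_1$-mass carried by bad wave packets whose associated cap lies close to the direction $P_x^{(\ell)}$-pre-image; this mass is in turn controlled by $\|P^{(\ell)}_x \mu_1\|_{L^p}^p$ on a suitable level set. Integrating in $x$ against $\mu_2$ and invoking Theorem \ref{thm: multipara proj0} yields $\int \|P_x^{(\ell)}\mu_1\|_{L^p}^p\,d\mu_2(x)\lesssim 1$ (the hypothesis $\alpha>d-1$ can be arranged since $d-\tfrac{d_{\min}}{2}>d-1$ when $d_{\min}\ge 2$; otherwise one uses Theorem \ref{thm: multipara proj}). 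A Markov/Chebyshev step, with the bad threshold chosen appropriately in $R$, then produces $E_2'\subset E_2$ with $\mu_2(E_2')\ge 1-\tfrac{1}{1000}$ on which \eqref{eqn: mainest1} holds.

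For part (3) I would use a multiparameter version of Liu's identity to rewrite
\[
\int_{E_2}\|d^x_*(\mu_{1,g})\|_{L^2}^2\,d\mu_2(x)
\]
as an integral of $|\widehat{\mu_{1,g}}|^2$ against a product-sphere measure weighted by $\widehat{\mu_2}$, dyadically decomposed in each of the $\ell$ radial variables. On each dyadic piece I would run a broad/narrow reduction and then apply a multiparameter weighted $L^2$ restriction estimate for products of spheres, resting on the new multiparameter refined decoupling theorem announced in the introduction and proved in Section \ref{sec: dec}. The numerology of decoupling for $S^{d_{\min}-1}$ produces exactly the $\tfrac{1}{4}$ (resp.\ $\tfrac{1}{4}+\tfrac{1}{4d_{\min}}$) excess over $d-\tfrac{d_{\min}}{2}$ demanded in the hypothesis of (3).

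The main obstacle is part (3): because the weight generated by $\mu_2$ is not a tensor product, one cannot iterate the one-parameter weighted restriction, and because a variable-by-variable pigeonhole loses much more than $\log R$, the one-parameter refined decoupling cannot be iterated either. The crux of the whole argument is therefore the genuinely multiparameter refined decoupling inequality, together with the verification that the wave packets surviving the pruning in part (2) are suitably transverse across the $\ell$ factors to feed into it.
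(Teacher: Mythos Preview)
Your overall architecture matches the paper's, but there are two concrete gaps that would make the argument fail as written.

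First, your numerical claim in part (2) is backwards: $d-\tfrac{d_{\min}}{2}\le d-1$ whenever $d_{\min}\ge 2$, with equality only at $d_{\min}=2$. So for $d_{\min}\ge 3$ you \emph{cannot} invoke Theorem \ref{thm: multipara proj0}; you must use Theorem \ref{thm: multipara proj}, which requires $\pi_{V_i}\circ\pi_i({\rm supp}\,\mu_1)\cap\pi_{V_i}\circ\pi_i({\rm supp}\,\nu)=\varnothing$ for suitable $k_i$-dimensional subspaces $V_i\subset\mathbb{R}^{d_i}$ (with $k_i\approx d_i-\tfrac{d_{\min}}{2}+1$). Your pigeonholing in part (1) only separates $\pi_i(E_1)$ from $\pi_i(E_2)$; that does \emph{not} guarantee separation after a further projection to a lower-dimensional $V_i$. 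The paper handles this by fixing subspaces $W_i$ in advance and building $E_1,E_2$ so that already $\pi_{W_i}\circ\pi_i(E_1)$ and $\pi_{W_i}\circ\pi_i(E_2)$ are well separated (Lemma \ref{lem: E1E2}); Theorem \ref{thm: multipara proj} is then applied on a small Grassmannian neighborhood of $(W_1,\ldots,W_\ell)$.

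Second, your definition of $\mu_{1,g}$ is off in two ways that break the link between (2) and (3). The decomposition must be multi-scale in each factor (dyadic annuli $R_{j_i}=2^{j_i}R_0$, summed over all $j_i\ge 1$), not at a single scale $R$; otherwise $\mu_{1,g}$ does not approximate $\mu_1$ across all frequencies and the $L^2$ integral in (3) is not the right object. More importantly, ``bad'' must be defined, for every subset $\{i_1,\ldots,i_k\}\subset\{1,\ldots,\ell\}$, by the condition that the $\mu_2$-mass of the partial product tube $\prod_s 4T_{i_s}\times\prod_{i\notin\{i_s\}}B_1^{d_i}$ exceeds an explicit threshold $\prod_s R_{j_{i_s}}^{-\gamma_{i_s}}$. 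This is not the same as your ``$P_y^{(\ell)}\mu_1$ large on a cap'' criterion, and it is precisely this $\mu_2$-mass bound on \emph{good} tubes that is fed into the incidence estimate $M/W$ in part (3) (the paper's \eqref{eq:M/W}) to make the refined decoupling close. There is no broad/narrow step; the good condition itself replaces it.
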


It is easy to see that Theorem \ref{thm: main1} immediately follows from Theorem \ref{thm: actual}. We briefly sketch the argument below for the sake of completeness. The estimate (\ref{eqn: mainest1}) and (\ref{eqn: mainest2}) above can be viewed as multiparameter versions of \cite[Proposition 2.1 and 2.2]{GIOW}. Since the measures $\mu_1, \mu_2$ are not necessarily tensor products, and our construction of $\mu_{1,g}$ will be more involved compared to its one-parameter analogue, it doesn't seem that one can iterate Proposition 2.1 and 2.2 of \cite{GIOW} directly to obtain (\ref{eqn: mainest1}) and (\ref{eqn: mainest2}).

\begin{proof}[Proof of Theorem \ref{thm: main1} assuming Theorem \ref{thm: actual}]

According to conclusions (2) and (3) of Theorem \ref{thm: actual}, there is a point $x \in E_2$ satisfying (\ref{eqn: mainest1}) and $\| d^x_*(\mu_{1,g}) \|_{L^2} < + \infty$. Since $d^x_*(\mu_1)$ is a probability measure, one has from (\ref{eqn: mainest1}) that $\|d^x_*(\mu_{1,g})\|_{L^1} \ge 1 - 1/1000. $
Note that $d^x_*(\mu_1)$ is supported on $\Delta^{\vec{d}}_x(E)$, hence
$$
\int_{\Delta^{\vec{d}}_x(E)} | d^x_* \mu_{1,g} | = \int |d^x_* (\mu_{1,g})| - \int_{\Delta^{\vec{d}}_x(E)^c} |d^x_*(\mu_{1,g})|
$$
$$
\ge 1 - \frac{1}{1000} - \int |d^x_*(\mu_1) - d^x_*(\mu_{1,g})| \ge 1 - \frac{2}{1000}.
$$
On the other hand,
\begin{equation} \label{yes}
\int_{\Delta^{\vec{d}}_x(E)} | d^x_* \mu_{1,g}| \le | \Delta^{\vec{d}}_x(E)|^{1/2} \left( \int |d^x_* \mu_{1,g}|^2 \right)^{1/2}.
\end{equation}
Since $\int |d^x_* \mu_{1,g}|^2$ is finite, it follows that $|\Delta^{\vec{d}}_x(E)|$ is positive. 
\end{proof}

Now we present the details of the construction of $E_1$ and $E_2$.

Let $E\subset \mathbb{R}^d=\mathbb{R}^{d_1}\times\cdots\times \mathbb{R}^{d_\ell}$ be the fixed compact set as before, satisfying ${\rm dim}(E)>d-\frac{d_{\min}}{2}$. For any $\alpha \in (d-\frac{d_{\min}}{2}, {\rm dim}(E))$, the $\alpha$-dimensional Hausdorff measure of $E$ is positive. Next, for each $i=1,\cdots, \ell$, we arbitrarily fix an $m_i$-dimensional subspace $W_i\subset \mathbb{R}^{d_i}$, where $m_i:=d_i-\frac{d_{\min}}{2}+1$ if $d_{\min}$ is even, and $d_i-\frac{d_{\min}}{2}+\frac{1}{2}$ if $d_{\min}$ is odd. It is easy to see that there always holds $\alpha>d-m_i$. We then construct subsets $E_1, E_2\subset E$ according to the following lemma.

\begin{lemma}\label{lem: E1E2}
Let $E\subset \mathbb{R}^d=\mathbb{R}^{d_1}\times\cdots\times \mathbb{R}^{d_\ell}$ be a compact set with positive $\alpha$-dimensional Hausdorff measure, and $W_i$ an $m_i$-dimensional subspace in $\mathbb{R}^{d_i}$, where $\alpha>d-m_i$, $\forall i=1,\cdots,\ell$. Then there exist subsets $E_1, E_2\subset E$ so that 

(1) both $E_1, E_2$ have positive $\alpha$-dimensional Hausdorff measures;

(2) ${\rm dist}(\pi_{W_i} \circ \pi_i(E_1),\pi_{W_i}\circ \pi_i(E_2))\gtrsim 1$, $\forall i=1,\cdots,\ell$, where $\pi_i: \mathbb{R}^d\rightarrow \mathbb{R}^{d_i}$ and $\pi_{W_i}:\mathbb{R}^{d_i}\rightarrow W_i$ denote the orthogonal projections.
\end{lemma}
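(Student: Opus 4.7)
The plan is to exhibit two points $x_0, y_0 \in E$ whose projections $P_i := \pi_{W_i} \circ \pi_i$ differ in every factor $W_i$, and then to let $E_1, E_2$ be the intersections of $E$ with small balls around $x_0$ and $y_0$. Since each $P_i$ is $1$-Lipschitz, the separation of $P_i(E_1)$ and $P_i(E_2)$ will then automatically be controlled from below by $|P_i(x_0)-P_i(y_0)|$ minus twice the ball radius, so it will suffice to choose the radius much smaller than $\min_i |P_i(x_0)-P_i(y_0)|$.

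To locate the required $x_0, y_0$, I would first apply Frostman's lemma: since $\mathcal{H}^\alpha(E)>0$, one obtains a positive Radon measure $\mu$ supported on $E$ with $\mu(E)>0$ and $\mu(B(x,r)) \lesssim r^\alpha$ for all $x \in \mathbb{R}^d$, $r>0$. The key dimensional observation is that for every $i$ and every $x \in \mathbb{R}^d$, the fibre $P_i^{-1}(P_i(x))$ is an affine subspace of $\mathbb{R}^d$ of dimension $d - m_i$. By hypothesis $\alpha > d - m_i$, so this fibre has vanishing $\alpha$-Hausdorff measure, and the Frostman ball bound therefore forces $\mu(P_i^{-1}(P_i(x))) = 0$. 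Taking the union over $i = 1, \ldots, \ell$ still yields a $\mu$-null set, and this is precisely where the hypothesis $\alpha > d - m_i$ is consumed.

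The remainder is a routine selection. Pick any $x_0 \in \textrm{supp}(\mu) \subset E$. The previous step gives $\mu\bigl(E \setminus \bigcup_i P_i^{-1}(P_i(x_0))\bigr) = \mu(E) > 0$, so this set meets $\textrm{supp}(\mu)$ and we may choose $y_0$ from the intersection; then $P_i(x_0) \neq P_i(y_0)$ for every $i$. Set $\delta := \tfrac{1}{4}\min_i |P_i(x_0) - P_i(y_0)| > 0$ and let
\[
E_1 := E \cap \overline{B}(x_0,\delta), \qquad E_2 := E \cap \overline{B}(y_0,\delta).
\]
Since $x_0, y_0 \in \textrm{supp}(\mu)$, both $E_1$ and $E_2$ have positive $\mu$-measure, and the Frostman ball bound gives $\mu \lesssim \mathcal{H}^\alpha|_E$, so both also have positive $\alpha$-Hausdorff measure. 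Finally, the $1$-Lipschitz property of $P_i$ yields $\textrm{dist}(P_i(E_1), P_i(E_2)) \ge |P_i(x_0) - P_i(y_0)| - 2\delta \ge 2\delta$, a positive constant depending only on $E$ and the fixed subspaces $W_i$.

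I do not expect a serious obstacle: the single nontrivial input is the dimensional observation that each fibre $P_i^{-1}(P_i(x))$ is too low-dimensional to carry $\mu$-mass, and the rest is essentially a support-of-measure selection argument. If the ``$\gtrsim 1$'' in the statement is to be read with a universal lower bound, the boundedness of $E$ allows an initial rescaling that raises $2\delta$ above any prescribed threshold.
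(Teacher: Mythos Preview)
Your proof is correct and takes a genuinely different route from the paper's. The paper argues \emph{iteratively}: at stage $i$ it pushes the restricted Hausdorff measure on the current set forward to $W_i$, observes that $\alpha>d-m_i$ forces the pushforward to have at least two support points, and splits into preimages of two separated balls in $W_i$; it then repeats this inside each piece for $i=2,\ldots,\ell$, producing a nested chain $E_1^1\supset E_1^2\supset\cdots\supset E_1^\ell$ (and similarly for the second set). You instead handle all $\ell$ projections \emph{simultaneously}: after passing to a Frostman measure $\mu$, you note that each single fibre $P_i^{-1}(P_i(x_0))$ is a $(d-m_i)$-plane and hence $\mu$-null, take the finite union over $i$, and extract $y_0$ from the complement in one stroke. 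The core dimensional input---$\alpha>d-m_i$ kills a single fibre---is identical in both arguments; what differs is the bookkeeping. Your approach is shorter and avoids the $\ell$-step nesting, at the mild cost of invoking Frostman's lemma (the paper works directly with $\mathcal{H}^\alpha|_E$). Your final remark about the meaning of ``$\gtrsim 1$'' is also on point: in both proofs the separation constant depends on $E$ and the fixed subspaces $W_i$, which is all that is required downstream.
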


\begin{proof}
We first construct a pair of subsets $E_1^1, E_2^1\subset E$ so that they both have 
positive $\alpha$-dimensional Hausdorff measures and their projections onto $W_1$ are separated.
Let $\mu^1$ be the pushforward measure of $\mathcal{H}^\alpha|_{E}$ under the projection $\pi_{W_1}\circ\pi_{1}$. By the assumption that $\alpha>d-m_1$, the support of $\mu^1$ has at least two
distinct points, and so we can take two separated balls $\tilde E_1^1$ and $\tilde E_2^1$ in $W_1$ around these points of positive $\mu^1$ measure. Then their preimages $E_1^1:=(\pi_{W_1}\circ\pi_{1})^{-1}(\tilde E_1^1) \cap E$ and $E_2^1:=(\pi_{W_1}\circ\pi_{1})^{-1}(\tilde E_2^1) \cap E$ satisfy the desired properties.

Next, we construct $E_1^2 \subset E_1^1$ and $E_2^2\subset E_2^1$ so that they both have positive $\alpha$-dimensional Hausdorff measures and their projections onto $W_2$ are separated.
Let $\mu_j^2$ be the pushforward measure of $\mathcal{H}^\alpha|_{E_j^1}$ under the projection $\pi_{W_2}\circ\pi_{2}$, $j=1,2$. By the assumption that $\alpha>d-m_2$, there are one point in the support of $\mu_1^2$ and another point in the support of $\mu_2^2$, and so in $W_2$ around these points we can take two separated balls: $\tilde E_1^2$ of positive $\mu_1^2$ measure and $\tilde E_2^2$ of positive $\mu_2^2$ measure. Then their preimages $E_1^2:=(\pi_{W_2}\circ\pi_{2})^{-1}(\tilde E_1^2) \cap E_1^1$ and $E_2^1:=(\pi_{W_2}\circ\pi_{2})^{-1}(\tilde E_2^2) \cap E_2^1$ satisfy the desired properties.

By repeating the argument in each of the rest of the variables, one eventually obtains subsets $E^{\ell}_1, E^{\ell}_2\subset E$ satisfying the desired conditions. 
\end{proof}

\section{Step 1: construction of $\mu_{1,g}$ and proof of (\ref{eqn: mainest1})}\label{sec: bad}
\setcounter{equation}0

The good measure $\mu_{1,g}$ is going to be defined by eliminating different types of bad multiparameter wave packets from $\mu_1$. .

Let $R_0$ be a large number that will be determined later, and let $R_{\vec{j}} :=(R_{j_{1}},\cdots, R_{j_{\ell}})$ with  $R_{j_i}:= 2^{j_i} R_0$. In each of the $\ell$ components, cover the annulus $R_{j_i-1} \le |\omega_i| \le R_{j_i}$ in $\mathbb{R}^{d_i}$ by rectangular blocks $\tau_i$ with dimensions approximately $R_{j_i}^{1/2} \times \cdots \times R_{j_i}^{1/2}\times R_{j_i}$, with the long direction of each block $\tau_i$ being the radial direction.  Then, choose a partition of unity subordinate to this cover for each $i$ so that

$$ 1 = \psi^i_0 + \sum_{j_i \ge 1, \tau_i} \psi_{j_i, \tau_i}. $$

Let $\delta > 0$ be a small constant to be determined later (more precisely, $\delta$ depends on $\alpha$ and will be determined in the proof of (\ref{eqn: mainest2}) in Section \ref{sec: dec}). For each $(j_i, \tau_i)$, cover the unit ball $B^{d_i}_1$ with tubes $T_i$ of dimensions approximately $R_{j_i}^{-1/2 + \delta}\times \cdots  R_{j_i}^{-1/2 + \delta}\times 2$ with the long axis parallel to the long axis of $\tau_i$. This covering has uniformly bounded overlap. Denote the collection of all these tubes in the $i$-th component as $\mathbb{T}^i_{j_i, \tau_i}$. Let $\eta_{T_i}$ be a smooth cutoff function essentially supported on $T_i$ and with Fourier transform compactly supported in the rectangle of dimensions $R_{j_i}^{1/2}\times\cdots \times R_{j_i}^{1/2}\times 2$ with the short side pointing in the long direction of $T_i$, so that for each choice of $j_i$ and $\tau_i$, $ \sum_{T_i \in \mathbb{T}^i_{j_i, \tau_i}} \eta_{T_i} $ is equal to $1$ on the ball of radius $2$. More precisely, $\eta_{T_i}$ rapidly decays outside the concentric tube of $T_i$ with a constant multiple of the radius.   

For each $T_i \in \mathbb{T}^i_{j_i, \tau_i}$, define an operator acting on the $i$-th variable:
\[
M^i_{T_i} f :=\eta_{T_i}  \mathcal{F}^{-1}_i[\psi_{j_i, \tau_i} \mathcal{F}_i(f)],
\]where $\mathcal{F}_i$ denotes the Fourier transform in the $i$-th variable. Roughly speaking, the operator $M^i_{T_i}$ maps $f$ in the $i$-th variable to the part of it that has Fourier support in $\tau_i$ and physical support in $T_i$.  Define also $M^i_0 f := \mathcal{F}^{-1}_i[\psi^i_0 \mathcal{F}_i(f)]$.  We denote $\mathbb{T}^i_{j_i} = \bigcup_{\tau_i} \mathbb{T}^i_{j_i, \tau_i}$ and $ \mathbb{T}^{i}=\bigcup_{j_i \ge 1} \mathbb{T}^i_{j_i}$. Hence, for any $L^1$ function $f$ supported on the unit ball, up to a small tail, $f$ can be decomposed in terms of these operators.

\begin{lemma}
Let $d=d_1+\cdots +d_\ell$ and $f\in L^1$ be a function supported on the unit ball $B_1^d$. Then
\[
f = \left[M^1_0  + \sum_{T_1 \in \mathbb{T}^1} M^1_{T_1} \right]\cdots\left[M^\ell_0  + \sum_{T_\ell \in \mathbb{T}^\ell} M^\ell_{T_\ell} \right]f+\text{RapDec}(R_0)f_{\text{tail}}\\
\]where the tail satisfies $\|f_{\text{tail}}\|_{L^1}\lesssim \|f\|_{L^1}$. 
\end{lemma}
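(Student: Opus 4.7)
The plan is to first establish a single-variable decomposition and then iterate.

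\emph{Step 1 (one-variable version).} I first show that if $g\in L^1$ is supported in the unit ball in its $i$-th variable (with arbitrary behavior in the other variables), then
\[
g = A_i g + \text{RapDec}(R_0)\, g_{\mathrm{tail},i}, \qquad A_i := M^i_0 + \sum_{T_i\in \mathbb{T}^i} M^i_{T_i},
\]
with $\|g_{\mathrm{tail},i}\|_{L^1}\lesssim \|g\|_{L^1}$. Applying the Fourier partition of unity $1 = \psi^i_0 + \sum_{j_i,\tau_i} \psi_{j_i,\tau_i}$ to $\mathcal{F}_i g$ gives $g = M^i_0 g + \sum_{j_i,\tau_i} h_{j_i,\tau_i}$, where $h_{j_i,\tau_i} := \mathcal{F}_i^{-1}[\psi_{j_i,\tau_i}\mathcal{F}_i g]$ is the convolution of $g$ in the $i$-th variable with the Schwartz-type kernel $\check\psi_{j_i,\tau_i}$, which is essentially localized to a rectangle dual to $\tau_i$. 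Since $g$ is supported in $B^{d_i}_1$ in that variable, $h_{j_i,\tau_i}$ is concentrated in a slight enlargement of $B^{d_i}_1$, contained in $B^{d_i}_2$, with an $L^1$-tail outside of size $\text{RapDec}(R_{j_i})\|g\|_{L^1}$. On $B^{d_i}_2$ one has $\sum_{T_i\in \mathbb{T}^i_{j_i,\tau_i}} \eta_{T_i}\equiv 1$, so multiplying $h_{j_i,\tau_i}$ by this partition recovers $\sum_{T_i} M^i_{T_i} g$ modulo another $\text{RapDec}(R_{j_i})$ error coming from the $\eta_{T_i}$ being only essentially rather than exactly supported in $T_i$. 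Since the number of $\tau_i$'s at scale $j_i$ is polynomial in $R_{j_i}$, summing over $\tau_i$ preserves the RapDec factor; the geometric series in $R_{j_i}=2^{j_i}R_0$ then collapses to $\text{RapDec}(R_0)$.

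\emph{Step 2 (iteration).} Starting from $f$, apply Step 1 successively in variables $1,2,\ldots,\ell$. At each stage the residual function remains supported in $B^{d_j}_1$ in every variable $j$ not yet decomposed, since $A_i$ acts only in the $i$-th variable (both $\psi_{j_i,\tau_i}$ and $\eta_{T_i}$ depend only on $x_i$) and thus preserves supports in the other variables. After $\ell$ steps one obtains
\[
f = A_\ell A_{\ell-1}\cdots A_1 f + \sum_{i=1}^{\ell} A_\ell\cdots A_{i+1}\bigl[\text{RapDec}(R_0)\, g_{\mathrm{tail},i}\bigr],
\]
where $g_{\mathrm{tail},i}$ is the tail produced when applying Step 1 to $A_{i-1}\cdots A_1 f$. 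Each $A_j$ is uniformly bounded on $L^1$ (by the uncertainty principle one has $\|\check\psi_{j_j,\tau_j}\|_{L^1}\lesssim 1$ and $\sum_{T_j}|\eta_{T_j}|\lesssim 1$), so each error term has $L^1$ norm $\lesssim \text{RapDec}(R_0)\|f\|_{L^1}$. Because $A_i$ and $A_j$ act on disjoint variables they commute, so $A_\ell\cdots A_1 = A_1\cdots A_\ell$, which matches the order in the statement. Taking $f_{\mathrm{tail}}$ to be the total tail (divided by the prefactor $\text{RapDec}(R_0)$) gives the desired decomposition.

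\emph{Main obstacle.} The main task is bookkeeping the $L^1$ errors: one must check that the $\text{RapDec}(R_{j_i})$ decay survives the summations over $\tau_i$, $T_i$, and the scales $j_i$. This reduces to the standard fact that the inverse Fourier transform of a smooth bump supported in a slab decays rapidly off the dual rectangle, together with the observation that the cardinalities of $\{\tau_i\}$ and $\{T_i\}$ at each scale are only polynomial in $R_{j_i}$ and hence absorbable into the rapid decay factor.
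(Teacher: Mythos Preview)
Your overall approach---establish the one-variable decomposition and then iterate---is exactly what the paper does (the paper cites the one-variable case as \cite[Lemma~3.4]{GIOW} rather than reproving it). However, there is a bookkeeping slip in Step~2 that leads you to an unjustified claim.

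With your notation $g_{\mathrm{tail},i}=(I-A_i)(A_{i-1}\cdots A_1 f)/\text{RapDec}(R_0)$, the correct telescoping is
\[
f-A_\ell\cdots A_1 f=\sum_{i=1}^\ell (A_{i-1}\cdots A_1-A_i\cdots A_1)f=\sum_{i=1}^\ell \text{RapDec}(R_0)\,g_{\mathrm{tail},i},
\]
with \emph{no} extra operators $A_\ell\cdots A_{i+1}$ acting on the tails. Your displayed formula inserts those factors, and to control them you assert that each $A_j$ is uniformly bounded on $L^1$. The justification you give (``$\|\check\psi_{j_j,\tau_j}\|_{L^1}\lesssim 1$ and $\sum_{T_j}|\eta_{T_j}|\lesssim 1$'') only bounds a single $(j_j,\tau_j)$-piece; it ignores the infinite sum over scales $j_j\ge 1$ and the $\sim R_{j_j}^{(d_j-1)/2}$ caps $\tau_j$ at each scale, so it does not yield $L^1$-boundedness of $A_j$ on general inputs.

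The fix is simple and is precisely what the paper does: with the correct telescoping you only need $\|g_{\mathrm{tail},i}\|_{L^1}\lesssim\|A_{i-1}\cdots A_1 f\|_{L^1}$ (from Step~1) and $\|A_{i-1}\cdots A_1 f\|_{L^1}\lesssim\|f\|_{L^1}$. The latter follows inductively from the decomposition itself, since $A_k\cdots A_1 f=A_{k-1}\cdots A_1 f-\text{RapDec}(R_0)\,g_{\mathrm{tail},k}$ at each step; no claim about $A_j$ acting on arbitrary $L^1$ functions is needed.
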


This decomposition essentially follows from iterating \cite[Lemma 3.4]{GIOW}. Since the tail term becomes more complicated in the multiparameter case, we provide the proof below for the sake of completeness.

\begin{proof}
We prove the case $\ell=2$ here. The proof of the general multiparameter case proceeds similarly. By applying \cite[Lemma 3.4]{GIOW} iteratively (first in the $x_1$ variable, then in $x_2$), one obtains
\[
\begin{split}
f=&\big[M^1_0  + \sum_{T_1 \in \mathbb{T}^1} M^1_{T_1} \big]f+\text{RapDec}(R_0)f_{\text{tail}}^{(1)}\\
=& \left[M^1_0  + \sum_{T_1 \in \mathbb{T}^1} M^1_{T_1} \right]\left[M^2_0  + \sum_{T_2 \in \mathbb{T}^2} M^2_{T_2} \right]f+ \text{RapDec}(R_0)f_{\text{tail}}^{(2)}+\text{RapDec}(R_0)f_{\text{tail}}^{(1)}\,,
\end{split}
\]
where
$$
\|f^{(1)}_{\text{tail}}(\cdot,x_2)\|_{L^1_{x_1}} \leq \|f(\cdot,x_2)\|_{L^1_{x_1}}\,,
$$
and 
$$
\|f^{(2)}_{\text{tail}}(x_1, \cdot)\|_{L^1_{x_2}} \leq \Big\| \big[M^1_0  + \sum_{T_1 \in \mathbb{T}^1} M^1_{T_1} \big]f(x_1,\cdot) \Big\|_{L^1_{x_2}}\,.
$$
Therefore,
$$
\|f^{(1)}_{\text{tail}}\|_{L^1} \leq \|f\|_{L^1}\,,
$$
and 
$$
\|f^{(2)}_{\text{tail}}\|_{L^1} \leq \|f-\text{RapDec}(R_0)f_{\text{tail}}^{(1)}\|_{L^1} \leq \|f\|_{L^1}+\text{RapDec}(R_0)\|f\|_{L^1}\,.
$$
\end{proof}

We now define different types of bad product tubes. Let $c(\alpha)> 0$ be a large constant to be determined later, and for each $i$, let $4T_i$ denote the concentric tube of four times the radius. For any given $1\leq k\leq \ell$ and components $1\leq i_1<\cdots <i_k\leq \ell$, we say a product tube $(T_{i_1},\cdots, T_{i_k}) \in \mathbb{T}^{i_1}_{j_{i_1}}\times \cdots \times \mathbb{T}^{i_k}_{j_{i_k}}$ is \emph{($i_1,\cdots, i_k$)-bad} if
\[
\mu_2 \left(\big(\prod_{s=1}^k 4T_{i_s}\big)\times \big(\prod_{i\neq i_1,\cdots,i_k}B_1^{d_i}\big)\right) \ge \begin{cases} \prod_{s=1}^k R_{j_{i_s}}^{-(\frac{d_{i_s}}{2}-\frac{d_{\min}}{4}) + c(\alpha) \delta}, & d_{\min} \text{ is even}; \\ \prod_{s=1}^k R_{j_{i_s}}^{-(\frac{d_{i_s}}{2}-\frac{d_{\min}}{4}-\frac{1}{4}) + c(\alpha) \delta}, & d_{\min} \text{ is odd}. \end{cases}
\]A product tube $(T_{i_1},\cdots, T_{i_k})$ is \emph{($i_1, \cdots, i_k$)-good} if it is not ($i_1,\cdots, i_k$)-bad.






We are now ready to construct the complex valued measure $\mu_{1,g}$, by removing all bad product wave packets from $\mu_1$. Define the good part of the measure $\mu_1$ with respect to $\mu_2$ as
\begin{equation} \label{DefG}
\mu_{1,g}=\sum_{k=0}^{\ell} \sum_{1\leq i_1<\cdots<i_k\leq\ell} \bigg(\prod_{i\neq i_1,\cdots,i_k} M^i_0\bigg) \bigg(\sum_{\substack{(T_{i_1},\cdots,T_{i_k})\in \mathbb{T}^{i_1}\times\cdots\times\mathbb{T}^{i_k} \\ (i_1,\cdots,i_k){\textrm -good}}} M^{i_1}_{T_{i_1}}\cdots M^{i_k}_{T_{i_k}}\mu_1\bigg).
\end{equation}
Here, the $k=0$ term is $M_0^1M_0^2\cdots M_0^\ell \mu_1$ by convention. We point out that $\mu_{1,g}$ is only a complex valued measure, and is essentially supported in the $R_0^{-1/2+\delta}$-neighborhood of $E_1$ with a rapidly decaying tail away from it (see Lemma \cite[Lemma 5.2]{GIOW} for the proof of the analogue in the one-parameter case, which implies our claim by iteration).

\subsection{Proof of (\ref{eqn: mainest1})}

Fix a point $x\in E_2$, one has by definition that
\begin{equation}\label{eqn: bad}
\begin{split}
&\|d^x_\ast(\mu_1)-d^x_\ast(\mu_{1,g})\|_{L^1}\\
\leq & \text{RapDec}(R_0)+\sum_{k=1}^{\ell} \sum_{1\leq i_1<\cdots<i_k\leq\ell} \sum_{j_{i_1},\cdots, j_{i_k}\geq 1}\\
&\quad  \sum_{\substack{(T_{i_1},\cdots,T_{i_k})\in \mathbb{T}^{i_1}_{j_{i_1}}\times\cdots\times\mathbb{T}^{i_k}_{j_{i_k}} \\ (i_1,\cdots,i_k){\textrm -bad}}}\left\|d_\ast^x\left[\big(\prod_{i\neq i_1,\cdots,i_k} M^i_0\big) M^{i_1}_{T_{i_1}}\cdots M^{i_k}_{T_{i_k}}\mu_1\right]\right\|_{L^1}.
\end{split}
\end{equation}


In the following, we will first reduce the quantity above to the $\mu_1$ measure of certain bad regions, which are unions of bad product tubes of different types. Such reduction follows very closely the one-parameter case treated in Section 3 of \cite{GIOW}. Many arguments below for this reduction are iterations of their corresponding one-parameter analogues. However, it is sometimes impossible to directly iterate the one-parameter result (see for instance Lemma \ref{lem: hardL1} below), and even if it is possible, oftentimes there are various rapidly decaying tails involved in the reduction, which makes the iteration process quite delicate. Therefore, for the sake of completeness and clarity, we include a sketch of the reduction and provide necessary details for handling new complications in the multiparameter setting. 

To estimate the pushforward measures defined for each product tube, we first need the following lemma.

\begin{lemma}\label{lem: easyL1}
For any given $1\leq k\leq \ell$, any components $1\leq i_1<\cdots <i_k\leq \ell$, and any product tube $(T_{i_1},\cdots, T_{i_k}) \in \mathbb{T}^{i_1}_{j_{i_1}}\times \cdots \times \mathbb{T}^{i_k}_{j_{i_k}}$. Let $f$ be a function supported in the unit ball, then there holds
\begin{equation}\label{eqn: x in}
\|M^{i_1}_{T_{i_1}}\cdots M^{i_k}_{T_{i_k}} f\|_{L^1}\lesssim \sum_{S\subset \{1,\cdots, k\}}\left(\prod_{s\in \{1,\cdots, k\}\setminus S}\text{RapDec}(R_{j_{i_s}})\right)\left\|f\left(\prod_{s\in S} \chi_{2T_{i_s}}\right)\right\|_{L^1}.
\end{equation}

Moreover, for any $i=1,\ldots, \ell$, there holds
\begin{equation}\label{eqn: M0}
    \|M^i_0 f\|_{L^1}\lesssim \|f\|_{L^1}.
\end{equation}
\end{lemma}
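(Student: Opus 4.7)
The plan is to establish the one-variable base case $k=1$ of (\ref{eqn: x in}) first, and then iterate. For (\ref{eqn: M0}), the operator $M^i_0$ is simply convolution in the $i$-th variable with $\mathcal{F}_i^{-1}[\psi^i_0]$, which is a fixed Schwartz profile; Young's inequality in $x_i$ followed by Fubini in the remaining variables gives $\|M^i_0 f\|_{L^1} \lesssim \|f\|_{L^1}$ immediately.

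For the base case of (\ref{eqn: x in}), I would prove that for any $L^1$ function $g$,
\[
\|M^{i_s}_{T_{i_s}} g\|_{L^1} \lesssim \|g\, \chi_{2T_{i_s}}\|_{L^1} + \textrm{RapDec}(R_{j_{i_s}}) \|g\|_{L^1}.
\]
Decompose $g = g\chi_{2T_{i_s}} + g(1-\chi_{2T_{i_s}})$. The contribution of the first piece is handled by Young's inequality in the $x_{i_s}$ variable: the kernel $K_{i_s} := \mathcal{F}_{i_s}^{-1}[\psi_{j_{i_s},\tau_{i_s}}]$ satisfies $\|K_{i_s}\|_{L^1_{x_{i_s}}} \lesssim 1$, since $\psi_{j_{i_s},\tau_{i_s}}$ is a smooth bump on a box of volume comparable to $|\tau_{i_s}|$ whose inverse Fourier transform concentrates on the dual box of reciprocal volume; combined with $\|\eta_{T_{i_s}}\|_{L^\infty}\lesssim 1$, this yields the $\|g\chi_{2T_{i_s}}\|_{L^1}$ contribution. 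For the tail piece, I would exploit the scale mismatch between the essential support of $K_{i_s}$ (whose shortest scales are $R_{j_{i_s}}^{-1/2}$) and the thickness of $2T_{i_s}$ (namely $R_{j_{i_s}}^{-1/2+\delta}$): if $x_{i_s} \in 2T_{i_s}$, then $|x_{i_s}-y|\gtrsim R_{j_{i_s}}^{-1/2+\delta}$ for $y$ in the $x_{i_s}$-support of $g(1-\chi_{2T_{i_s}})$, so factoring out a fraction of the Schwartz decay of $K_{i_s}$ yields a $\textrm{RapDec}(R_{j_{i_s}})$ prefactor multiplying an integrable residual convolution; if $x_{i_s}$ lies outside a constant dilate of $T_{i_s}$, the factor $\eta_{T_{i_s}}(x_{i_s})$ itself is $\textrm{RapDec}(R_{j_{i_s}})$ by construction. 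Summing the two cases gives the desired bound.

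Given the base case, (\ref{eqn: x in}) follows by iteration. The decisive structural observation is that $M^{i_s}_{T_{i_s}}$ acts only in the $x_{i_s}$ variable and therefore commutes with multiplication by $\chi_{2T_{i_{s'}}}$ whenever $s'\neq s$. Applying the base case to the outermost operator $M^{i_1}_{T_{i_1}}$ with $g = M^{i_2}_{T_{i_2}}\cdots M^{i_k}_{T_{i_k}} f$, then commuting the retained cutoff $\chi_{2T_{i_1}}$ past the remaining operators onto $f$, reduces the number of factors by one; repeating this with $M^{i_2}_{T_{i_2}}$ applied to $M^{i_3}_{T_{i_3}}\cdots M^{i_k}_{T_{i_k}}(f\chi_{2T_{i_1}})$, and so on, the recursion unfolds into a bound over the $2^k$ branches indexed by subsets $S\subset\{1,\ldots,k\}$ exactly as in (\ref{eqn: x in}), with each variable $s\notin S$ contributing a $\textrm{RapDec}(R_{j_{i_s}})$ factor. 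The main obstacle is the tail estimate in the base case; the multiparameter iteration itself is routine because operators in distinct variables decouple through the commutation property.
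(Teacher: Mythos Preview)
Your proposal is correct and follows essentially the same approach as the paper: the one-variable base case (which the paper imports as \cite[Lemma~3.2]{GIOW} while you sketch it directly) combined with induction on $k$, using that operators and spatial cutoffs in distinct variables commute. The only imprecision is in your tail estimate, where the case ``$x_{i_s}\in 2T_{i_s}$'' does not guarantee separation from $y\notin 2T_{i_s}$ near the boundary; replace it by $x_{i_s}\in cT_{i_s}$ for some fixed $c<2$ (harmless since $\eta_{T_{i_s}}$ already decays rapidly off $T_{i_s}$), and the argument goes through.
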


\begin{proof}
Estimate (\ref{eqn: x in}) follows from the one-parameter analogous result in \cite[Lemma 3.2]{GIOW} and induction on $k$. Without loss of generality, assume that $\{i_1,\cdots, i_k\}=\{1,\cdots,k\}$. First consider the case $k=1$. Denote $x''=(x_2,\cdots,x_\ell)$ and $d''=d_2+\cdots+d_\ell$.
By applying \cite[Lemma 3.2]{GIOW} to the variable $x_1$ one obtains that
\[
\begin{split}
\|M_{T_1}^1 f\|_{L^1}\lesssim &\Big\| \|f(\cdot, x'')\|_{L^1_{x_1}(2T_1)}+\text{RapDec}(R_{j_1})\|f(\cdot,x'')\|_{L^1_{x_1}(\mathbb{R}^{d_1})} \Big\|_{L^1_{x''}}\\
\leq &\|f\|_{L^1(2T_1\times \mathbb{R}^{d''})}+\text{RapDec}(R_{j_1})\|f\|_{L^1}\,.
\end{split}
\]Note that even though we have chosen to work with non compactly supported cutoff functions $\eta_{T_i}$ in the definition of $M^i_{T_i}$, \cite[Lemma 3.2]{GIOW} still holds true.

Now assume that \eqref{eqn: x in} holds in the cases $k=1,2,\cdots,m-1$ and we prove it in the case $k=m$. Applying \eqref{eqn: x in} in the case $k=m-1$ to the function $M^m_{T_m}f$, one has
\begin{align*}
    &\|M^1_{T_1}\cdots M^m_{T_m} f\|_{L^1} \\
    \lesssim &\sum_{S\subset \{1,\cdots, m-1\}}\left(\prod_{s\in \{1,\cdots, m-1\}\setminus S}\text{RapDec}(R_{j_s})\right)\left\|M^m_{T_m}f\left(\prod_{s\in S} \chi_{2T_s}\right)\right\|_{L^1},
\end{align*}
and for each $L^1$ norm on the right hand side there holds
\begin{align*}
    &\left\|M^m_{T_m}f\left(\prod_{s\in S} \chi_{2T_s}\right)\right\|_{L^1} \\
    \lesssim &\left\|f\left(\prod_{s\in S} \chi_{2T_s}\right)\chi_{2T_m}\right\|_{L^1} + \text{RapDec}(R_{j_m}) \left\|f\left(\prod_{s\in S} \chi_{2T_s}\right)\right\|_{L^1}\,,
\end{align*}
from which \eqref{eqn: x in} in the case $k=m$ follows.

For the second estimate (\ref{eqn: M0}), by definition, 
\[M^i_0 f := \mathcal{F}^{-1}_i[\psi^i_0 \mathcal{F}_i(f)]=\mathcal{F}^{-1}_i(\psi^i_0)\ast f.
\]Since $\mathcal{F}^{-1}_i(\psi^i_0)$ is essentially supported in a ball in $\mathbb{R}^{d_i}$ centered at the origin with radius $R_0^{-1}$, and satisfies $\|\mathcal{F}^{-1}_i(\psi^i_0)\|_{L^1}\lesssim 1$, one concludes (\ref{eqn: M0}). 
\end{proof}

A direct consequence of the lemma above is that for each $T_{i_s}\in \mathbb{T}^{i_s}_{j_{i_s}}$, $s=1,\cdots, k$, 
\[
\begin{split}
&\left\| d_\ast^x\left[\big(\prod_{i\neq i_1,\cdots,i_k} M^i_0\big) M^{i_1}_{T_{i_1}}\cdots M^{i_k}_{T_{i_k}}\mu_1\right] \right\|_{L^1}\\
\lesssim& \sum_{S\subset \{1,\cdots, k\}}\left(\prod_{s\in \{1,\cdots, k\}\setminus S}\text{RapDec}(R_{j_{i_s}})\right) \mu_1\left(\big(\prod_{s\in S}2T_{i_s}\big)\times \big(\prod_{i\neq i_s,\,\forall s\in S}B_1^{d_i}\big)\right).
\end{split}
\]

There is another piece of information needed for us to bound the total contribution of the bad tubes, that is, it suffices to consider those tubes that are close to the fixed point $x$, at least in one of the variables. In the one-parameter setting studied in \cite{GIOW}, let $d^x(y):=|x-y|$ be the one-parameter distance map, $T\in \mathbb{T}_{j,\tau}$ be a one-parameter tube of radius $R_j^{-1/2+\delta}$ with long direction given by the block $\tau$, and $M_T$ be the associated operator that morally restricts the support of the input function to $T$ and its Fourier support to $\tau$. It is shown in Lemma 3.1 of \cite{GIOW} that $\|d^x_\ast(M_T \mu_1)\|_{L^1}\lesssim \text{RapDec}(R_{j})$ if $x\notin 2T$. In the lemma below, we explore the multiparameter extension of this phenomenon.

\begin{lemma}\label{lem: hardL1}
For any given $1\leq k\leq \ell$, any components $1\leq i_1<\cdots <i_k\leq \ell$, and any product tube $(T_{i_1},\cdots, T_{i_k}) \in \mathbb{T}^{i_1}_{j_{i_1}}\times \cdots \times \mathbb{T}^{i_k}_{j_{i_k}}$. Let $x\in E_2$. Suppose that for $A\subset \{1,\cdots, k\}$, one has $x_{i_a}\notin 2T_{i_a}$, $\forall a\in A$. Then,
\[
\begin{split}
&\left\| d_\ast^x\left[\big(\prod_{i\neq i_1,\cdots,i_k} M_0^i\big)M_{T_{i_1}}^{i_1}\cdots M_{T_{i_k}}^{i_k} \mu_1 \right] \right\|_{L^1}\\
\lesssim& \left(\prod_{a\in A}\text{RapDec}(R_{j_{i_a}})\right)\cdot\\
&\quad\sum_{S\subset \{1,\cdots, k\}\setminus A} \left(\prod_{s\in \{1,\cdots, k\}\setminus (S\cup A)}\text{RapDec}(R_{j_{i_s}})\right)\mu_1\left(\big(\prod_{s\in S}2T_{i_s} \big) \times \big( \prod_{i\neq i_s,\,\forall s\in S} B_1^{d_i}\big) \right).
\end{split}
\]

\end{lemma}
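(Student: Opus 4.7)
The plan is to combine the one-parameter rapid-decay estimate \cite[Lemma 3.1]{GIOW}, applied in each coordinate $i_a$ with $a\in A$, with the already established Lemma \ref{lem: easyL1}, used to handle the remaining coordinates in $\{1,\ldots,k\}\setminus A$. The crucial structural observation is that the multiparameter distance map factors as a product of one-parameter distance maps, so the pushforward $d^x_\ast$ factors as $\prod_{i=1}^\ell (d^{x_i}_i)_\ast$, and these one-parameter pushforwards commute since they act on disjoint coordinates.

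Set $B := \{1,\ldots,k\}\setminus A$, $C := \{1,\ldots,\ell\}\setminus\{i_1,\ldots,i_k\}$, $\bar A := \{1,\ldots,\ell\}\setminus\{i_a:a\in A\}$, and write $F := \bigl(\prod_{i\in C} M^i_0\bigr)\bigl(\prod_{s=1}^k M^{i_s}_{T_{i_s}}\bigr)\mu_1$. Using polar coordinates $y_i = x_i + t_i\omega_i$ in each variable, $(d^x_\ast F)(t) = \int F(x+t\cdot\omega)\prod_i t_i^{d_i-1}\,d\omega$. The first step is to pull the absolute value outside the $\omega_i$ and $t_i$ integrals for $i\in\bar A$ and then revert those variables back to Cartesian form, which yields
\[
\|d^x_\ast F\|_{L^1} \leq \int \bigl\|\,d^{x_A}_\ast[F(\cdot,y_{\bar A})]\,\bigr\|_{L^1([0,\infty)^{|A|})}\, dy_{\bar A},
\]
where $d^{x_A}_\ast := \prod_{a\in A}(d^{x_{i_a}}_{i_a})_\ast$. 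For each fixed $y_{\bar A}$, the slice factors as $F(\cdot,y_{\bar A}) = \bigl(\prod_{a\in A} M^{i_a}_{T_{i_a}}\bigr)H(\cdot,y_{\bar A})$, where $H := \bigl(\prod_{i\in C} M^i_0\bigr)\bigl(\prod_{s\in B} M^{i_s}_{T_{i_s}}\bigr)\mu_1$. Since $x_{i_a}\notin 2T_{i_a}$ for each $a\in A$, the one-parameter estimate \cite[Lemma 3.1]{GIOW}, suitably extended to arbitrary $L^1$-inputs supported in the unit ball, shows that each composition $(d^{x_{i_a}}_{i_a})_\ast M^{i_a}_{T_{i_a}}$ gains a factor of $\mathrm{RapDec}(R_{j_{i_a}})$ in $L^1$. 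Iterating this gain across $a\in A$ by Fubini (legal because operators on distinct coordinates commute) and then integrating in $y_{\bar A}$ yields $\|d^x_\ast F\|_{L^1} \lesssim \prod_{a\in A}\mathrm{RapDec}(R_{j_{i_a}})\cdot\|H\|_{L^1(\mathbb{R}^d)}$. Lemma \ref{lem: easyL1}, together with $\|M^i_0 f\|_{L^1}\lesssim \|f\|_{L^1}$, then bounds $\|H\|_{L^1}$ by $\sum_{S\subset B}\bigl(\prod_{s\in B\setminus S}\mathrm{RapDec}(R_{j_{i_s}})\bigr)\,\mu_1\bigl(\prod_{s\in S}2T_{i_s}\times\prod_{i\neq i_s\,\forall s\in S}B^{d_i}_1\bigr)$, which recombines to exactly the stated bound since $B\setminus S = \{1,\ldots,k\}\setminus(A\cup S)$.

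The main obstacle will be the upgrade just alluded to: \cite[Lemma 3.1]{GIOW} is stated for the specific Frostman measure $\mu_1$, yet here it must be applied to the $L^1$-slice $H(\cdot,y_{\bar A})$, which need not be Frostman. I expect that a careful reading of the GIOW proof will show it only uses the $L^1$ bound and the unit-ball support of the input, so this extension should be routine; nevertheless, one must verify it and track the essential supports (up to rapidly decaying tails) of $H$ after each $M^i$ is applied, to ensure the slices genuinely live, up to negligible error, in the unit ball so that the one-parameter gain is legitimately applicable.
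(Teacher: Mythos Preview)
Your approach is essentially the same as the paper's: both factor $d^x_\ast$ into commuting one-parameter pushforwards, iterate the one-parameter rapid-decay estimate \cite[Lemma 3.1]{GIOW} across the coordinates indexed by $A$ to gain $\prod_{a\in A}\mathrm{RapDec}(R_{j_{i_a}})$, and then invoke Lemma \ref{lem: easyL1} on the remaining function $H=(\prod_{i\in C}M^i_0)(\prod_{s\in B}M^{i_s}_{T_{i_s}})\mu_1$ to produce the sum over $S\subset B$. The paper formalizes the first step as the general inequality $\|d^x_\ast[\prod_{a\in A}M^{i_a}_{T_{i_a}}f]\|_{L^1}\lesssim(\prod_{a\in A}\mathrm{RapDec}(R_{j_{i_a}}))\|f\|_{L^1}$ for compactly supported $f$ separated from $x$ in every coordinate, and then specializes to $f=H$; you arrive at the same inequality by freezing $y_{\bar A}$ first, but the content is identical. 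Your anticipated obstacle is exactly right: the paper also notes that the proof (not merely the statement) of \cite[Lemma 3.1]{GIOW} yields the needed bound for arbitrary $L^1$ inputs with the separation hypothesis, and the compact-support/separation issue for $H$ is harmless precisely because the operators defining $H$ act only on the $\bar A$-coordinates, so in each $A$-coordinate $H$ retains the support of $\mu_1$, hence of $E_1$, which is separated from $x_{i_a}\in\pi_{i_a}(E_2)$ by the construction in Lemma \ref{lem: E1E2}.
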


\begin{proof}
By a standard limiting argument, it suffices to study the case that $d\mu_1=\mu_1(y)dy$. In this case, according to Lemma \ref{lem: easyL1}, it suffices to prove that
\begin{equation}\label{eqn: special f}
\begin{split}
&\left\| d_\ast^x\left[\big(\prod_{i\neq i_1,\cdots, i_k} M_0^i \big)M_{T_{i_1}}^{i_1}\cdots M_{T_{i_k}}^{i_k} \mu_1 \right] \right\|_{L^1}\\
\lesssim & \left(\prod_{a\in A}\text{RapDec}(R_{j_{i_a}})\right) \left\| \big(\prod_{i\neq i_1,\cdots,i_k} M_0^i\big)\big( \prod_{s\in \{1,\cdots,k\}\setminus A}M^{i_s}_{T_{i_s}}\big)\mu_1\right\|_{L^1}.
\end{split}
\end{equation}We will in fact prove a more general estimate: for any $f\in L^1$ with compact support and such that ${\rm dist}(x_{i},\pi_{i}({\rm supp}f))\gtrsim 1$, $\forall i=1,\cdots,\ell$, there holds
\begin{equation}\label{eqn: general f}
\left\| d^x_\ast \left[\prod_{a\in A} M_{T_{i_a}}^{i_a} f \right] \right\|_{L^1} \lesssim \left(\prod_{a\in A}\text{RapDec}(R_{j_{i_a}})\right) \|f\|_{L^1}.
\end{equation}It is easy to see that (\ref{eqn: special f}) will follow from (\ref{eqn: general f}) by taking 
\[
f=(\prod_{i\neq i_1,\cdots,i_k} M_0^i)( \prod_{s\in \{1,\cdots,k\}\setminus A}M^{i_s}_{T_{i_s}})\mu_1.\]

For the sake of brevity, we assume that $\ell=2$, $\{i_1,\cdots,i_k\}=\{1, 2\}$ and $A=\{1,2\}$ (i.e. $x_i\notin 2T_i$, $i=1,2$). Then the desired estimate becomes
\begin{equation}\label{eqn: x out}
\|d^x_\ast(M_{T_1}^1 M_{T_2}^2 f)\|_{L^1}\lesssim \text{RapDec}(R_{j_1}, R_{j_2})\|f\|_{L^1}.
\end{equation}The general case can be justified in the exact same way. 

Estimate (\ref{eqn: x out}) will follow from iterating its one-parameter counterpart (\cite[Lemma 3.1]{GIOW}). More precisely, the proof of \cite[Lemma 3.1]{GIOW} implies the following result in the one-parameter setting: for any $g\in L^1$ with compact support such that ${\rm dist}(x,{\rm supp}g)\gtrsim 1$, if $x\notin 2T_j$ (where $T_j\in\mathbb{T}_j$), then 
\[
\|d^x_\ast(M_{T_j}g)\|_{L^1}\lesssim {\rm RapDec}(R_j)\|g\|_{L^1}.
\]


Back in the multiparameter setting, by definition, one has
\[
\begin{split}
d^x_\ast(M_{T_1}^1 M_{T_2}^2 f)(t_1,t_2)=&M_{T_1}^1 M_{T_2}^2 f \ast^{(1)}\lambda_{t_1}\ast^{(2)}\lambda_{t_2}(x_1,x_2)\\
=& \int_{S^{d_1-1}(x_1,t_1)}\int_{S^{d_2-1}(x_2,t_2)} M_{T_1}^1M_{T_2}^2 f(y_1,y_2)\, d\lambda_{t_2}(y_2) d\lambda_{t_1}(y_1),
\end{split}
\]where $d\lambda_{t_i}$ denotes the surface measure on $S^{d_i-1}(x_i,t_i)$ and $*^{(i)}$
stands for the convolution in the $i$-th variable. This suggests that
\[
d^x_\ast(M_{T_1}^1 M_{T_2}^2 f)(t_1,t_2)=d^{x_2}_\ast[M_{T_2}^2(d^{x_1}_\ast(M_{T_1}^1f))(t_1)](t_2).
\]

By Fubini, the desired bound (\ref{eqn: x out}) then follows from applying the one-parameter estimate first to $g=d_\ast^{x_1}(M_{T_1}^1 f)(t_1)$ in the $x_2$ variable and then to $g=f$ in the $x_1$ variable.
\end{proof}

We are now ready to go back and bound the expression (\ref{eqn: bad}). Since there are finitely many choices of $1\leq k\leq\ell$ and $1\leq i_1<\cdots <i_k\leq \ell$, it suffices to show that the term corresponding to each fixed $k$ and $\{i_1, \cdots,i_k\}$ is bounded as desired. 

Fix a choice of $1\leq k\leq \ell$ and components $1\leq i_1<\cdots <i_k\leq \ell$. Because of symmetry, we assume that $\{i_1,\cdots, i_k\}=\{1,\cdots, k\}$ without loss of generality. Our goal is thus to show that there exists a subset $E'_2\subset E_2$ so that $\mu_2(E'_2)\geq 1-\frac{1}{1000 C_\ell}$ and for each $x\in E'_2$ there holds
\begin{equation}\label{eqn: bad 1}
\sum_{j_1,\cdots,j_k\geq 1} \sum_{\substack{(T_{1},\cdots,T_{k})\in \mathbb{T}^{1}_{j_{1}}\times\cdots\times\mathbb{T}^{k}_{j_{k}} \\ (1,\cdots,k){\textrm -bad}}}\left\|d_\ast^x\left[\big(\prod_{i=1}^k M^{i}_{T_{i}} \big)\big(\prod_{i=k+1}^\ell M^i_0\big) \mu_1\right]\right\|_{L^1}\leq\frac{1}{1000 C_\ell}.
\end{equation}for sufficiently large $R_0$, where $C_\ell$ is some large constant depending on $\ell$.

For any point $x\in E_2$ and $j_{i_1},\cdots,j_{i_k}\geq 1$, the inner sum on the LHS of the above can be bounded as
\[
\begin{split}
    &\sum_{\substack{(T_{1},\cdots,T_{k})\in \mathbb{T}^{1}_{j_{1}}\times\cdots\times\mathbb{T}^{k}_{j_{k}} \\ (1,\cdots,k){\textrm -bad}}}\left\|d_\ast^x\left[\big(\prod_{i=1}^k M^{i}_{T_{i}} \big)\big(\prod_{i=k+1}^\ell M^i_0\big) \mu_1\right]\right\|_{L^1}\\
    \leq &\sum_{A\subset \{1,\cdots, k\}} \sum_{\substack{(T_{1},\cdots,T_{k})\in \mathbb{T}^{1}_{j_{1}}\times\cdots\times\mathbb{T}^{k}_{j_{k}} \\ (1,\cdots,k){\textrm -bad},\\ x_{a}\notin 2T_{a},\, \forall a\in A,\\ x_{a}\in 2T_{a},\, \forall a\in \{1,\cdots, k\}\setminus A}}\left\|d_\ast^x\left[\big(\prod_{i=1}^k M^{i}_{T_{i}} \big)\big(\prod_{i=k+1}^\ell M^i_0\big)\mu_1\right]\right\|_{L^1}\\
    \leq &\sum_{A\subset \{1,\cdots, k\}} \sum_{S\subset \{1,\cdots, k\}\setminus A} \left(\prod_{s\in \{1,\cdots, k\}\setminus S}\text{RapDec}(R_{j_{s}})\right)\cdot\\
    &\qquad \sum_{\substack{(T_{1},\cdots,T_{k})\in \mathbb{T}^{1}_{j_{1}}\times\cdots\times\mathbb{T}^{k}_{j_{k}} \\ (1,\cdots,k){\textrm -bad},\\ x_{s}\in 2T_{s},\, \forall s\in S}} \mu_1\left(\big(\prod_{s\in S}2T_{s} \big) \times \big( \prod_{i\neq s,\,\forall s\in S} B_1^{d_i}\big) \right),
\end{split}
\]where we have applied Lemma \ref{lem: hardL1} in the second inequality.

In the following, we will fix a choice of $A$ and $S$, which is fine since there are only finitely many possibilities. Without loss of generality, assume that $S=\{1,\cdots,m\}$, $m\leq k$. For any given tubes $T_i\in \mathbb{T}^i_{j_i}$, $i=m+1,\cdots,k$, define the set
\[
\begin{split}
\text{Bad}_{j_1,\cdots,j_k}^{T_{m+1},\cdots, T_k}:=&\big\{(y,x)\in E_1\times E_2:\, \exists (T_1,\cdots, T_m) \in \mathbb{T}^1_{j_1}\times \cdots\times \mathbb{T}^m_{j_m} \text{ s.t. }\\
&\quad T_1\times \cdots \times T_k \text{ is ($1,\cdots, k$)-bad},\, y_i, x_i\in 2T_i,\,\forall i=1,\cdots, m  \big\}.
\end{split}
\]Then, observing that the sets $E_1, E_2$ are separated by distance $\gtrsim 1$ in every variable (as a consequence of Lemma \ref{lem: E1E2}) hence the tubes $T_i$ being measured have only finite overlap (up to constant $R_{j_i}^{\delta d_i}$), one thus has  
\begin{equation}\label{eqn: bad 2}
\begin{split}
&\sum_{j_1,\cdots,j_k\geq 1} \left(\prod_{i=m+1}^k \text{RapDec}(R_{j_i}) \right) \sum_{\substack{(T_{1},\cdots,T_{k})\in \mathbb{T}^{1}_{j_{1}}\times\cdots\times\mathbb{T}^{k}_{j_{k}} \\ (1,\cdots,k){\textrm -bad},\\ x_{i}\in 2T_{i},\, 1\leq i\leq m}} \mu_1\left(\big(\prod_{i=1}^m 2T_{i} \big) \times \big( \prod_{i=m+1}^\ell B_1^{d_i}\big) \right)\\
\lesssim & \sum_{j_1,\cdots,j_k\geq 1} \left(\prod_{i=m+1}^k \text{RapDec}(R_{j_i}) \right)\left(\prod_{i=1}^m R_{j_i}^{\delta d_i} \right)\cdot\\
&\quad \sum_{T_{i}\in \mathbb{T}^{i}_{j_{i}},\, m+1\leq i\leq k} \mu_1\left(\big( \bigcup_{\substack{(T_{1},\cdots,T_{m})\in \mathbb{T}^{1}_{j_{1}}\times\cdots\times\mathbb{T}^{m}_{j_{m}}: \\ T_1\times\cdots\times T_k \,(1,\cdots,k){\textrm -bad},\\ x_{i}\in 2T_{i},\, 1\leq i\leq m}}  \prod_{i=1}^m 2T_i\big) \times \big( \prod_{i=m+1}^\ell B_1^{d_i}\big) \right)\\
=&\sum_{j_1,\cdots,j_k\geq 1} \left(\prod_{i=m+1}^k \text{RapDec}(R_{j_i}) \right)\left(\prod_{i=1}^m R_{j_i}^{\delta d_i}\right) \sum_{T_{i}\in \mathbb{T}^{i}_{j_{i}},\, m+1\leq i\leq k} \mu_1(\text{Bad}_{j_1,\cdots,j_k}^{T_{m+1},\cdots, T_k}(x)).
\end{split}
\end{equation}Here, we have used the notation $\text{Bad}_{j_1,\cdots,j_k}^{T_{m+1},\cdots, T_k}(x)$ to denote the slice of the set $\text{Bad}_{j_1,\cdots,j_k}^{T_{m+1},\cdots, T_k}$ at $x\in E_2$.

The key estimate of the subsection is the following.
\begin{lemma}\label{lem: step1main}
Let $\alpha>d-\frac{d_{\min}}{2}$, there exists sufficiently large $c(\alpha)>0$ such that for all $j_1, \cdots, j_k\geq 1$ and all $(T_{m+1},\cdots, T_k)\in \mathbb{T}^{m+1}_{j_{m+1}}\times\cdots\times \mathbb{T}^k_{j_k}$,
\begin{equation}\label{eqn: bad mu}
\mu_1\times \mu_2(\text{Bad}_{j_1,\cdots,j_k}^{T_{m+1},\cdots, T_k})\leq C(R_{j_{m+1}},\cdots, R_{j_k}) \prod_{i=1}^m R_{j_i}^{-2\delta d_i}.
\end{equation}
\end{lemma}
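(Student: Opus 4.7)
The plan is to bound $\mu_1\times\mu_2(\mathrm{Bad}_{j_1,\ldots,j_k}^{T_{m+1},\ldots,T_k})$ by combining a geometric reduction to the product of spheres with the multiparameter radial projection theorem. Fix $y\in E_1$ and let $\mathrm{Bad}(y)$ denote the $y$-slice. The key geometric observation is that if $x\in\mathrm{Bad}(y)$, then by definition there exist tubes $T_i=T_i(y_i,\tau_i)\in\mathbb{T}^i_{j_i,\tau_i}$ with $y_i,x_i\in 2T_i$ for $i=1,\ldots,m$ making the product (with the fixed $T_{m+1},\ldots,T_k$) $(1,\ldots,k)$-bad. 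Since $T_i$ has long direction $\tau_i$ and transverse size $R_{j_i}^{-1/2+\delta}$, the vector $(x_i-y_i)/|x_i-y_i|$ lies in a cap $\Omega_{\tau_i}\subset S^{d_i-1}$ of radius $\sim R_{j_i}^{-1/2+\delta}$. Let $B(y)\subset S^{d_1-1}\times\cdots\times S^{d_m-1}$ be the union of product caps $\prod_{i=1}^m\Omega_{\tau_i}$ over bad direction tuples $(\tau_1,\ldots,\tau_m)$; then the $m$-parameter radial projection $P_y^{(m)}$ (in the first $m$ components) carries $\mathrm{Bad}(y)$ into $B(y)$, so that $\mu_2(\mathrm{Bad}(y))\leq P_y^{(m)}\mu_2(B(y))$.

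I then apply H\"older's inequality on the sphere product, then again on the outer $d\mu_1$ integral, to get
\[
\mu_1\times\mu_2(\mathrm{Bad})\leq \Bigl(\int\|P_y^{(m)}\mu_2\|_{L^p(\prod S^{d_i-1})}^p\,d\mu_1(y)\Bigr)^{1/p}\Bigl(\int |B(y)|\,d\mu_1(y)\Bigr)^{1/p'}
\]
for a suitable $p>1$. The first factor is controlled by a constant independent of the $R_{j_i}$'s by applying the multiparameter radial projection theorem (Theorem \ref{thm: multipara proj}) with $m$ parameters to the Frostman measures $\mu_1,\mu_2$; the separation of $E_1,E_2$ after projecting onto the subspaces $W_i$ (Lemma \ref{lem: E1E2}) supplies the disjointness hypothesis. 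For the second factor, the bad condition gives $\mu_2(4T_1\times\cdots\times 4T_k\times B_1^{d_{k+1}}\times\cdots\times B_1^{d_\ell})\geq \prod_{i=1}^kR_{j_i}^{-c_i+c(\alpha)\delta}$ with $c_i=d_i/2-d_{\min}/4$ in the even case (and an extra $-1/4$ in the odd case). Since tubes $T_i(y_i,\tau_i)$ with varying $\tau_i$ have bounded overlap away from $y_i$, summing over $\tau$ and using Frostman on $\mu_2$ yields
\[
\#\mathrm{BadDir}(y)\lesssim C(R_{j_{m+1}},\ldots,R_{j_k})\prod_{i=1}^k R_{j_i}^{c_i-c(\alpha)\delta},
\]
and multiplying by the cap measure $\prod_{i=1}^m R_{j_i}^{-(d_i-1)(1/2-\delta)}$ gives $|B(y)|\lesssim C(R_{j_{m+1}},\ldots,R_{j_k})\prod_{i=1}^m R_{j_i}^{\frac{1}{2}-\frac{d_{\min}}{4}+(d_i-1-c(\alpha))\delta}$. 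Raising to the $1/p'$ and choosing $c(\alpha)$ large enough (in terms of $\alpha$, $d$, $p$) forces each exponent to be $\leq-2\delta d_i$, yielding \eqref{eqn: bad mu}.

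The main obstacle I anticipate is applying Theorem \ref{thm: multipara proj} at the first factor: since we only know $\alpha>d-\tfrac{d_{\min}}{2}$, which can be well below $d-1$, the clean large-$\alpha$ version Theorem \ref{thm: multipara proj0} does not apply, and one must invoke the full multiparameter projection result incorporating orthogonal projections onto the $W_i$'s. Verifying that the projected measures retain enough multiparameter energy, and that the resulting exponent $p$ is compatible with the cap-size computation in the second factor (so the $\delta$-losses are absorbed by a fixed choice of $c(\alpha)$), is the delicate part. A secondary technical point is the odd $d_{\min}$ case, where the extra $\tfrac{1}{4}$ in $c_i$ is absorbed by the slightly larger dimensional threshold in Theorem \ref{thm: main1} and does not change the structure of the argument.
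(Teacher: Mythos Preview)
Your overall strategy---H\"older on the sphere product to split into an $L^p$ radial-projection factor and a cap-measure factor, then bound the first by the multiparameter radial projection theorem and the second by counting bad directions---matches the paper's approach. However, there is a genuine gap in how you set up the factorization.

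You write the first factor as $\int \|P_y^{(m)}\mu_2\|_{L^p(\prod_{i=1}^m S^{d_i-1})}^p\,d\mu_1(y)$, a radial projection onto the full ambient spheres $S^{d_i-1}$. Theorem~\ref{thm: multipara proj} does \emph{not} bound this quantity unless $\alpha>d-1$ (the case $k_i=d_i$, i.e.\ Theorem~\ref{thm: multipara proj0}), which, as you correctly note, can fail once $d_{\min}\geq 3$. What Theorem~\ref{thm: multipara proj} actually controls is
\[
\int \|P_w^{(\ell)}\pi_V\mu_2\|_{L^p(\prod_i S^{k_i-1})}^p\,d\pi_V\mu_1(w)
\]
for a.e.\ $V=(V_1,\ldots,V_\ell)$ with $\dim V_i=k_i$: the radial projection is taken \emph{after} orthogonally projecting to the $V_i$'s, and lands in $S^{k_i-1}$, not $S^{d_i-1}$. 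There is no way to pass from the latter to the former, so the H\"older split as you have written it cannot be closed. Your final paragraph flags this as ``the main obstacle'' but does not resolve it; in fact the entire geometric setup (caps $\Omega_{\tau_i}\subset S^{d_i-1}$, the set $B(y)$, the cap-size computation) has to be redone in the projected space.

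The fix, which is what the paper carries out, is to push the whole argument into the projected space from the start. One fixes a good $V$ near the pre-selected $W$ from Lemma~\ref{lem: E1E2} for which the inner integral in Theorem~\ref{thm: multipara proj} is finite, observes that each relevant tube $T_i$ projects under $\pi_{V_i}$ to a tube $\tilde T_i\subset V_i$ of the same transverse scale $R_{j_i}^{-1/2+\delta}$ (this uses that $T_i$ meets both $\pi_i(E_1)$ and $\pi_i(E_2)$, whose $V_i$-projections are $\gtrsim 1$ apart, so $\pi_{V_i}$ does not collapse the long direction), defines a projected bad region $\widetilde{\mathrm{Bad}}\subset \pi_V(E_1)\times\pi_V(E_2)$, and checks $\mu_1\times\mu_2(\mathrm{Bad})\leq \pi_V\mu_1\times\pi_V\mu_2(\widetilde{\mathrm{Bad}})$. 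The H\"older and cap-counting steps then run entirely in $\prod_i S^{k_i-1}$. With cap measure $\prod_i R_{j_i}^{(-\frac12+\delta)(k_i-1)}$ and $k_i-1=d_i-\tfrac{d_{\min}}{2}$ (even case), the main exponent $c_i-\tfrac{k_i-1}{2}$ cancels exactly to zero, leaving a clean $-(c(\alpha)-\text{const})\delta$ decay. Your own cap computation in $S^{d_i-1}$ happens to give a nonpositive main exponent $\tfrac12-\tfrac{d_{\min}}{4}$ as well, so numerically the second factor would have been fine; the obstruction is solely that the $L^p$ bound for the unprojected radial projection is unavailable.
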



\begin{proof}[Proof of estimate (\ref{eqn: mainest1}) using Lemma \ref{lem: step1main}]
According to the reduction explained above, it suffices to show that there exists $E_2'\subset E_2$ with $\mu_2(E_2')\geq 1-\frac{1}{1000 C_\ell}$ such that $\forall x\in E_2'$, there holds that 
\begin{align*}
   & \sum_{j_1,\cdots,j_k\geq 1} \left(\prod_{i=m+1}^k \text{RapDec}(R_{j_i}) \right)\left(\prod_{i=1}^m R_{j_i}^{\delta d_i} \right) \sum_{T_{i}\in \mathbb{T}^{i}_{j_{i}},\, m+1\leq i\leq k} \mu_1(\text{Bad}_{j_1,\cdots,j_k}^{T_{m+1},\cdots, T_k}(x))\\
   \leq &\frac{1}{1000 C_\ell}\,,
\end{align*}
if $R_0$ is chosen sufficiently large.


Note that
\[
\mu_1 \times \mu_2 (\text{Bad}_{j_1,\cdots,j_k}^{T_{m+1},\cdots, T_k}) = \int \mu_1(\text{Bad}_{j_1,\cdots,j_k}^{T_{m+1},\cdots, T_k}(x)) d\mu_2(x). 
\]
Then, by (\ref{eqn: bad mu}), one is able to choose a subset $B_{j_1,\cdots,j_k}^{T_{m+1},\cdots,T_k} \subset E_2$ so that 
\[
\mu_2(B_{j_1,\cdots,j_k}^{T_{m+1},\cdots,T_k}) \le \big(\prod_{i=1}^m R_{j_i}^{- (1/2) \delta d_i}\big) \big(\prod_{i=m+1}^k R_{j_i}^{-N_i}\big)
\]and for all $x \in E_2 \setminus B_{j_1,\cdots,j_k}^{T_{m+1},\cdots,T_k}$, 
\[
\mu_1(\text{Bad}_{j_1,\cdots,j_k}^{T_{m+1},\cdots, T_k}(x)) \leq C(R_{j_{m+1}},\cdots,R_{j_k})\big(\prod_{i=m+1}^k R_{j_i}^{N_i}\big) \big(\prod_{i=1}^m R_{j_i}^{- (3/2) \delta d_i}\big). 
\]Here, $N_{m+1},\cdots, N_k$ are sufficiently large numbers that are chosen so that
\[
\sum_{T_{i}\in \mathbb{T}^{i}_{j_{i}},\, m+1\leq i\leq k} \mu_2(B_{j_1,\cdots,j_k}^{T_{m+1},\cdots,T_k}) \le \prod_{i=1}^k R_{j_i}^{- (1/2) \delta d_i}.
\]

 Define
 \[
 E_2' = E_2 \setminus \left(\bigcup_{j_1,\cdots, j_k \ge 1}\bigcup_{T_{i}\in \mathbb{T}^{i}_{j_{i}},\, m+1\leq i\leq k} B_{j_1,\cdots,j_k}^{T_{m+1},\cdots,T_k}\right).
 \]By taking $R_0$ sufficiently large, one has $\mu_2(E_2') \ge 1 - \frac{1}{1000 C_\ell}$ as desired.  Moreover, for each $x \in E_2'$, there holds
\[
\begin{split}
&\sum_{j_1,\cdots,j_k\geq 1} \left(\prod_{i=m+1}^k \text{RapDec}(R_{j_i}) \right)\left(\prod_{i=1}^m R_{j_i}^{\delta d_i} \right) \sum_{T_{i}\in \mathbb{T}^{i}_{j_{i}},\, m+1\leq i\leq k} \mu_1(\text{Bad}_{j_1,\cdots,j_k}^{T_{m+1},\cdots, T_k}(x))\\
\lesssim &\sum_{j_1,\cdots, j_k \ge 1} \prod_{i=1}^m R_{j_i}^{\delta d_i-(3/2)\delta d_i} \cdot \prod_{i=m+1}^k \text{RapDec}(R_{j_i}) \lesssim R_0^{- (1/2) \delta d_i}. 
\end{split}
\]Hence the desired bound follows by choosing $R_0$ sufficiently large.

\end{proof}

The crucial ingredient in the proof of Lemma \ref{lem: step1main} is a new multiparameter radial projection theorem (Theorem \ref{thm: multipara proj}), which extends the one-parameter version proved by Orponen \cite{O17b}. Theorem \ref{thm: multipara proj} is in fact more general than Orponen's theorem, as it includes the case that $\alpha$ is small as well. The statement and the proof of Theorem \ref{thm: multipara proj}, as well as the proof of Lemma \ref{lem: step1main}, will be given in the next section.

\section{Proof of the multiparameter radial projection theorem}\label{sec: proj}
\setcounter{equation}0

We prove the new multiparameter radial projection theorem in this section, which may be of independent interest. We first prove the special case of the theorem, Theorem \ref{thm: multipara proj0}, where $\alpha$ is assumed to be very large. It is then extended to the general case later. The proof of Lemma \ref{lem: step1main}, which implies estimate (\ref{eqn: mainest1}), is presented at the end of the section.

Below are several ingredients used in the proof of Theorem \ref{thm: multipara proj0}. 

Denote $e=(e_1,\cdots,e_\ell)\in S^{d_1-1}\times\cdots\times S^{d_{\ell}-1}$, and consider the orthogonal projection 
$$\pi_e=\pi_{e_1}\times\cdots\times\pi_{e_\ell}: \mathbb{R}^d= \mathbb{R}^{d_1}\times\cdots\times\mathbb{R}^{d_\ell} \to e_1^\perp\times\cdots\times e_\ell^\perp,
$$
where $e_i^{\perp}\in G(d_i,d_{i}-1)$ is the orthogonal complement of the vector $e_i$ in $\mathbb{R}^{d_i}$ and $\pi_{e_i}: \mathbb{R}^{d_i} \to e_i^\perp$ is the corresponding orthogonal projection. Given $\mu\in\mathcal{M}(\mathbb{R}^d)$ and $y\in\mathbb{R}^d$ with $y_i\notin \pi_i({\rm supp}\,\mu)$, define
$$
\mu_y(x)=C_{d_1,\cdots,d_\ell} \prod_{i=1}^\ell |x_i-y_i|^{1-d_i} d\mu(x),
$$
where $C_{d_1,\cdots,d_\ell}$ is chosen to make Lemma \ref{projlem1} below true.

\begin{lemma}\label{projlem1}
Let $\mu\in C_c(\mathbb{R}^d)$ and $\nu\in\mathcal{M}(\mathbb{R}^d)$ with $\pi_i({\rm supp}\,\mu) \cap \pi_i({\rm supp}\,\nu) = \varnothing$, $\forall i=1,\ldots, \ell$. Then, for $p\in(0,\infty)$,
$$
\int \|P^{(\ell)}_y\mu_y\|^p_{L^p(S^{d_1-1}\times\cdots\times S^{d_{\ell}-1})}\,d\nu(y) = \int_{S^{d_1-1}\times\cdots\times S^{d_{\ell}-1}} \|\pi_e\mu\|_{L^p(\pi_e\nu)}^p d\mathcal{H}^{d-\ell}(e)\,,
$$where $\mathcal{H}^{d-\ell}:=\mathcal{H}^{d_1-1}\big|_{S^{d_1-1}}\times \cdots\times \mathcal{H}^{d_\ell-1}\big|_{S^{d_\ell-1}}$. 
\end{lemma}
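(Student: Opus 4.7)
The identity is at its heart a change-of-variables computation in polar coordinates, performed simultaneously in each of the $\ell$ factors. Standard mollification lets me reduce to the case where $\mu$ is absolutely continuous with a smooth, compactly supported density, still denoted $\mu$, with the projected supports of $\mu$ and $\nu$ still pairwise disjoint in each coordinate.

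For the LHS, for each fixed $y$ with $y_i\notin\pi_i(\mathrm{supp}\,\mu)$ for all $i$, the first step is to compute the density of $(P_y^{(\ell)})_*\mu_y$ with respect to the product measure $\mathcal{H}^{d-\ell}$ on $\prod_i S^{d_i-1}$. Introducing polar coordinates $x_i = y_i + r_i e_i$ with $r_i>0$, $e_i\in S^{d_i-1}$ in each factor produces a Jacobian $\prod_i r_i^{d_i-1}$ which exactly cancels the weight $\prod_i |x_i-y_i|^{1-d_i} = \prod_i r_i^{1-d_i}$ appearing in the definition of $\mu_y$; the disjoint-projection hypothesis guarantees $\mu_y$ is integrable, since $r_i$ is bounded away from $0$ on $\mathrm{supp}\,\mu$ for $y\in\mathrm{supp}\,\nu$. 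The result is
\[
(P_y^{(\ell)}\mu_y)(e) = C_{d_1,\ldots,d_\ell}\int_{(0,\infty)^\ell}\mu(y_1+r_1 e_1,\ldots,y_\ell+r_\ell e_\ell)\,dr,
\]
so the LHS equals $C_{d_1,\ldots,d_\ell}^p\iint |F(y,e)|^p\,d\mathcal{H}^{d-\ell}(e)\,d\nu(y)$, where $F(y,e):=\int_{(0,\infty)^\ell}\mu(y+r\cdot e)\,dr$.

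For the RHS, the pushforward rule unfolds the norm to $\|\pi_e\mu\|_{L^p(\pi_e\nu)}^p=\int|(\pi_e\mu)(\pi_e y)|^p\,d\nu(y)$, and a Fubini argument in each $e_i$-direction gives $(\pi_e\mu)(\pi_e y)=\int_{\mathbb{R}^\ell}\mu(y+t\cdot e)\,dt$. Splitting $\mathbb{R}^\ell$ into its $2^\ell$ orthants via $t_i=\sigma_i r_i$ with $r_i>0$ and $\sigma\in\{\pm1\}^\ell$ puts this in the form $\sum_\sigma F(y,\sigma e)$. Applying Fubini on the RHS to exchange the $y$ and $e$ integrals, and then substituting $e\mapsto\sigma e$ on the product sphere (an isometry that preserves $\mathcal{H}^{d-\ell}$) inside each summand, lets me recognize the RHS as an integral involving the same $F(y,e)$ as the LHS, up to an explicit combinatorial factor.

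The hard step, and the one where the disjoint-projection hypothesis does real work, is the passage from the sum of $2^\ell$ orthant integrals $|\sum_\sigma F(y,\sigma e)|^p$ to a single sphere integral of $|F(y,e)|^p$: for general $p$ the algebraic identity $|\sum_\sigma F(y,\sigma e)|^p = \sum_\sigma |F(y,\sigma e)|^p$ fails without some separation condition. The geometric input from $\pi_i(\mathrm{supp}\,\mu)\cap\pi_i(\mathrm{supp}\,\nu)=\varnothing$ will be used to show that for $\nu$-a.e.\ $y$ and $\mathcal{H}^{d-\ell}$-a.e.\ $e$ the orthant contributions to $(\pi_e\mu)(\pi_e y)$ do not interact in a way that breaks this collapse, so that the sum reorganizes via the sign symmetry into a clean multiple of $\int_S |F(y,e)|^p\,d\mathcal{H}^{d-\ell}(e)$. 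Choosing $C_{d_1,\ldots,d_\ell}$ precisely to absorb the resulting combinatorial factor yields the stated equality; this bookkeeping is the main technical point of the proof.
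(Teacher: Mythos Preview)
Your polar-coordinate reduction is correct: the left side is $C^p\iint|F(y,e)|^p$ with $F(y,e)=\int_{(0,\infty)^\ell}\mu(y+r\cdot e)\,dr$, and the right side is $\iint\big|\sum_\sigma F(y,\sigma e)\big|^p$. The paper itself gives no independent proof of the lemma, deferring entirely to the one-parameter version in Orponen \cite{O17b}; your computation is the natural multiparameter adaptation of that argument, so up to the ``hard step'' there is nothing to compare.

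The resolution you propose for that step, however, is wrong. The disjoint-projection hypothesis does \emph{not} force at most one orthant term $F(y,\sigma e)$ to be nonzero for $\mathcal{H}^{d-\ell}$-a.e.\ $e$, nor does it produce any $\mu$-independent collapse. A one-parameter counterexample: take $d=2$, let $\nu$ be a (mollified) point mass at the origin, and let $\mu$ be a smooth bump of mass $a$ near $(1,0)$ plus one of mass $b$ near $(-1,0)$. The supports are disjoint, yet for $e$ near $(\pm 1,0)$ the full line $\mathbb{R} e$ meets both bumps, so $(\pi_e\mu)(0)$ picks up $a+b$ while the half-line integrals $F(0,e)$ and $F(0,-e)$ pick up $a$ and $b$ separately. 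One finds that the ratio of the right side to $\iint|F|^p$ is approximately $2(a+b)^p/(a^p+b^p)$, which varies with $a,b$ whenever $p\neq 1$. Thus the ``combinatorial factor'' you hope to absorb into $C_{d_1,\ldots,d_\ell}$ is not combinatorial at all---it depends on $\mu$---and no $p$-independent choice of the constant makes the identity exact in the generality stated.

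What your computation \emph{does} yield, immediately, is two-sided comparability: since $\mu\geq 0$, one has $F(y,e)\leq\sum_\sigma F(y,\sigma e)\leq 2^\ell\max_\sigma F(y,\sigma e)$, and the sign symmetry $e\mapsto\sigma e$ of the product-sphere measure converts this into matching upper and lower bounds after integrating in $e$. That comparability (with constants depending on $p,\ell$) is all that is actually used downstream in the paper; see the display \eqref{eq:projlem}, where the lemma enters only through an upper bound.
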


Lemma \ref{projlem1} follows from exactly the same argument as its known one-parameter analogue, see \cite[Lemma 3.1]{O17b}. We will also use the following generalized formula for integration  in polar coordinates, see \cite[(24.2)]{MattilaBook}:

\begin{lemma}\label{projlem2}
For any non-negative Borel function $f$ on $\mathbb{R}^n$,
\begin{equation}\label{eqn: polar}
\int_{G(n,k)}\int_{V^\perp} |x|^a f(x)\,d\mathcal{H}^{n-k}(x) d \gamma_{n,k}(V)=C_{n,k}\int_{\mathbb{R}^n}|y|^{a-k}f(y)\,dy,
\end{equation}
where $\gamma_{n,k}$ is the Haar measure on $G(n,k)$.
\end{lemma}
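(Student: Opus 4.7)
The plan is to reduce both sides of (\ref{eqn: polar}) to identical one-dimensional radial integrals via polar coordinates, and to match them using the uniqueness (up to scaling) of rotation-invariant finite Borel measures on $S^{n-1}$.

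First I would apply polar coordinates within each $(n-k)$-dimensional subspace $V^\perp$: writing $x = r\theta$ with $r=|x|$ and $\theta \in S(V^\perp) := S^{n-1}\cap V^\perp$, one has $d\mathcal{H}^{n-k}(x) = r^{n-k-1}\,dr\,d\sigma_{V^\perp}(\theta)$, where $\sigma_{V^\perp}$ denotes the surface measure on the unit sphere of $V^\perp$. Integrating against $d\gamma_{n,k}(V)$ and applying Fubini, the LHS of (\ref{eqn: polar}) becomes
\[
\int_0^\infty r^{a+n-k-1}\left[\int_{G(n,k)}\int_{S(V^\perp)} f(r\theta)\,d\sigma_{V^\perp}(\theta)\,d\gamma_{n,k}(V)\right]dr.
\]

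The crucial step is to identify the bracketed double integral. Define a finite Borel measure $\lambda$ on $S^{n-1}$ by setting $\int_{S^{n-1}} g\,d\lambda := \int_{G(n,k)}\int_{S(V^\perp)} g(\theta)\,d\sigma_{V^\perp}(\theta)\,d\gamma_{n,k}(V)$ for Borel $g \geq 0$. Because $\gamma_{n,k}$ is $O(n)$-invariant and the family $\{\sigma_{V^\perp}\}_V$ transforms equivariantly under $O(n)$ (i.e.\ the pushforward of $\sigma_{V^\perp}$ by $U \in O(n)$ equals $\sigma_{(UV)^\perp}$), the measure $\lambda$ is rotation-invariant on $S^{n-1}$, and hence $\lambda = C'_{n,k}\,\mathcal{H}^{n-1}|_{S^{n-1}}$ for a constant $C'_{n,k}$ depending only on $n$ and $k$. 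Meanwhile, the usual polar decomposition on $\mathbb{R}^n$ gives
\[
\int_{\mathbb{R}^n} |y|^{a-k} f(y)\,dy = \int_0^\infty r^{a+n-k-1}\int_{S^{n-1}} f(r\theta)\,d\mathcal{H}^{n-1}(\theta)\,dr,
\]
so the two sides of (\ref{eqn: polar}) coincide with $C_{n,k} = C'_{n,k}$. The argument is essentially routine; the only non-mechanical ingredient is the $O(n)$-equivariance of $\{\sigma_{V^\perp}\}$, which is classical and follows from the uniqueness of rotation-invariant probability measures on spheres combined with the transitivity of $O(n)$ on $G(n,k)$. This is precisely why the paper simply cites the identity from Mattila's book rather than proving it.
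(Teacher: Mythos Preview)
Your argument is correct: the reduction via polar coordinates in each $V^\perp$ together with the $O(n)$-invariance of the averaged spherical measure is precisely the standard route to this identity. The paper does not supply its own proof but simply cites the formula from Mattila's book \cite[(24.2)]{MattilaBook}, so there is no alternative approach to compare against; you have filled in exactly what the reference would provide.
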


\begin{lemma}\label{projlem3}
Let $0<\alpha\leq d$. Let $\mu\in\mathcal{M}(\mathbb{R}^d)$ with $\mu(B(x,r))\leq  C_\alpha(\mu) r^\alpha$, $\forall x\in \mathbb{R}^d$, $\forall r>0$. Then
$$
\int_{\mathbb{R}^d}\int_{\mathbb{R}^d} \frac{d\mu(x)d\mu(y)}{\prod_{i=1}^\ell |x_i-y_i|^{t_i}} \approx \int_{\mathbb{R}^d} \frac{|\widehat\mu(\xi)|^2}{\prod_{i=1}^\ell |\xi_i|^{d_i-t_i}}\,d\xi \lesssim C_\alpha(\mu)^2\,,
$$
whenever $0<t_i<\alpha-d+d_i, \forall i=1,\cdots,\ell$. Here the implicit constant in $\lesssim$ can depend on the diameter of the support of $\mu$.
\end{lemma}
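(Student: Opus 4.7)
The two assertions split naturally. For the first approximate equality, the plan is to identify the Fourier transform of the tensor product kernel $K(z_1,\ldots,z_\ell) = \prod_{i=1}^\ell |z_i|^{-t_i}$ with $c\prod_i|\xi_i|^{-(d_i-t_i)}$ using the classical Riesz potential calculation in each factor. The Fourier transform of $|z_i|^{-t_i}$ on $\mathbb{R}^{d_i}$ equals a dimensional constant times $|\xi_i|^{-(d_i-t_i)}$ whenever $0<t_i<d_i$, which is guaranteed by the hypothesis $t_i<\alpha-d+d_i\leq d_i$; since Fourier transforms tensorize across $\mathbb{R}^d=\mathbb{R}^{d_1}\times\cdots\times\mathbb{R}^{d_\ell}$, we obtain $\widehat K(\xi)=c\prod_i|\xi_i|^{-(d_i-t_i)}$. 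Parseval's identity then converts the double $\mu$-integral of $K(x-y)$ into the claimed Fourier-side expression.

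For the upper bound $\lesssim C_\alpha(\mu)^2$, I would use a dyadic decomposition together with a covering lemma that bridges the isotropic Frostman condition with the anisotropic product geometry. Setting $r_i=2^{-k_i}$, split the integral according to $|x_i-y_i|\sim r_i$; since $\mu$ has compact support of diameter $D$, only scales $r_i\leq D$ contribute. The key geometric observation is that the slice $\{x:|x_i-y_i|\sim r_i,\ \forall i\}$ is contained in a product of $d_i$-dimensional balls of radii $r_i$, which can be covered by $\prod_i(r_i/r_{\min})^{d_i}$ Euclidean balls of radius $r_{\min}:=\min_i r_i$ in $\mathbb{R}^d$. Applying the Frostman hypothesis to each covering ball yields
\[
\mu\bigl(\{x:|x_i-y_i|\sim r_i,\ \forall i\}\bigr)\lesssim C_\alpha(\mu)\,r_{\min}^{\alpha-d}\prod_{i=1}^\ell r_i^{d_i}.
\]
Integrating in $y$ against $\mu$ and using $\mu(\mathbb{R}^d)\lesssim_D C_\alpha(\mu)$ gives the same bound, with one extra factor of $C_\alpha(\mu)$, for the $\mu\times\mu$-mass of the dyadic shell.

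Multiplying by the kernel weight $\prod_i r_i^{-t_i}$ and summing reduces the total to
\[
C_\alpha(\mu)^2\sum_{r_i\leq D} r_{\min}^{\alpha-d}\prod_i r_i^{d_i-t_i}.
\]
I would estimate this by splitting according to which index $j$ achieves the minimum: with $j$ and $r_j$ fixed, the dyadic sums of $r_i^{d_i-t_i}$ over $r_j\leq r_i\leq D$ for $i\neq j$ are each bounded by a $D$-dependent constant (the exponents $d_i-t_i$ are positive since $\alpha\leq d$), and the remaining sum over $r_j\leq D$ of $r_j^{\alpha-d+d_j-t_j}$ converges precisely because the hypothesis $t_j<\alpha-d+d_j$ makes that exponent strictly positive. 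Summing over the $\ell$ choices of $j$ produces the claimed bound.

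The main obstacle is the covering step, which is the only place the multiparameter structure really bites. The Frostman condition is phrased isotropically on Euclidean balls, but the dyadic shells natural to the product kernel are anisotropic boxes whose side lengths may be wildly disparate. The refinement $\prod_i(r_i/r_{\min})^{d_i}$ isotropic balls of radius $r_{\min}$ is exactly what extracts the sharp exponent $\alpha-d+d_j-t_j$ and matches the range of $t_j$ in the hypothesis; cruder alternatives such as $\mu(\text{box})\lesssim C_\alpha(\mu)\,r_{\max}^\alpha$ would leave divergent sums $\sum r_i^{-t_i}$ at small scales in the directions other than the one controlling $r_{\max}$.
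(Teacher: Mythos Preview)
Your proposal is correct and follows essentially the same approach as the paper: the covering step---packing the anisotropic box by $\prod_i (r_i/r_{\min})^{d_i}$ Euclidean balls of radius $r_{\min}$ and applying the Frostman condition to each---is identical, yielding the same intermediate bound $\mu(\text{box})\lesssim C_\alpha(\mu)\,r_{\min}^{\alpha-d}\prod_i r_i^{d_i}$. The only cosmetic difference is in the final summation: the paper observes one more inequality, $r_{\min}^{\alpha-d}\prod_i r_i^{d_i}\lesssim_D \prod_i r_i^{\alpha-d+d_i}$ (valid since $\alpha-d\le 0$ and $r_i\le D$), which makes the multiparameter geometric sum factor as $\prod_i\sum_{j_i} 2^{-j_i(\alpha-d+d_i-t_i)}$ and avoids your case split over which index realizes the minimum; both routes are valid and give the same constant dependence.
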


\begin{proof}
Let the diameter of the support of $\mu$ be $L$. Note that for $(r_1,\cdots,r_\ell)$ with $r_j=\min\{r_i:i=1,\cdots,\ell\}$ and $r_i\lesssim L, \forall i$,
\begin{align*}
&\mu\left( B^{d_1}(x_1,r_1)\times\cdots\times B^{d_\ell}(x_\ell,r_\ell)\right) \\
\lesssim &  C_\alpha(\mu) r_j^\alpha \prod_{i\neq j} \left(\frac{r_i}{r_j}\right)^{d_i} = C_\alpha(\mu) r_j^{\alpha-d+d_j} \prod_{i\neq j} r_i^{d_i} \lesssim C_\alpha(\mu) \prod_{i=1}^\ell r_i^{\alpha-d+d_i}\,.
\end{align*}
Then, by decomposing into regions $2^{-j_i-1}L<|x_i-y_i|\leq 2^{-j_i}L$, we get that
$$
\int_{\mathbb{R}^d}\int_{\mathbb{R}^d} \frac{d\mu(x)d\mu(y)}{\prod_{i=1}^\ell |x_i-y_i|^{t_i}} \lesssim C_\alpha(\mu)^2 \sum_{j_1,\cdots,j_\ell=0}^\infty\, \prod_{i=1}^\ell \frac{2^{-j_i(\alpha-d+d_i)}}{2^{-j_it_i}}\lesssim C_\alpha(\mu)^2\,,
$$
provided that $t_i<\alpha-d+d_i, \forall i=1,\cdots,\ell$, as desired.
\end{proof}

\begin{lemma}\label{projlem4}
Let $\nu\in\mathcal{M}(\mathbb{R}^d)$. Let $\sigma\in\mathcal{M}(S^{d_1-1}\times\cdots\times S^{d_\ell-1})$ with
$$
\sigma\left((B^{d_1}(x_1,r_1)\times\cdots\times B^{d_\ell}(x_\ell,r_\ell))\cap(S^{d_1-1}\times\cdots\times S^{d_\ell-1})\right) \lesssim r_1^{\beta_1}\cdots r_\ell^{\beta_\ell}\,.
$$
Then
\begin{align*}
&\int_{S^{d_1-1}\times\cdots\times S^{d_\ell-1}}\int_{e_1^\perp\times\cdots\times e_\ell^\perp}\int_{e_1^\perp\times\cdots\times e_\ell^\perp}\frac{d\pi_e\nu(z)d\pi_e\nu(w)}{\prod_{i=1}^\ell |z_i-w_i|^{t_i}}\,d\sigma(e)\\
\lesssim &\int_{\mathbb{R}^d}\int_{\mathbb{R}^d} \frac{d\nu(x)d\nu(y)}{\prod_{i=1}^\ell |x_i-y_i|^{t_i}}\,,
\end{align*}
whenever $t_i<\beta_i, \forall i=1,\cdots,\ell$.
\end{lemma}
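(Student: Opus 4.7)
The plan is to reduce the left-hand side to an integral against $\nu \times \nu$ and then bound a $\sigma$-integral uniformly in the two points of $\nu$. Since $\pi_{e_i}$ is an orthogonal projection, we have $\pi_{e_i}(x_i) - \pi_{e_i}(y_i) = \pi_{e_i}(x_i - y_i)$, and therefore
\begin{equation*}
\int_{e_i^\perp}\int_{e_i^\perp}\cdots = \int_{\mathbb{R}^d}\int_{\mathbb{R}^d} \prod_{i=1}^\ell \frac{1}{|\pi_{e_i}(x_i - y_i)|^{t_i}}\, d\nu(x)d\nu(y).
\end{equation*}
After applying Fubini (justified by non-negativity) to interchange the integrals in $e$ and $(x,y)$, the claim reduces to the uniform bound
\begin{equation*}
\sup_{u_1\in S^{d_1-1},\ldots,u_\ell\in S^{d_\ell-1}} \int_{S^{d_1-1}\times\cdots\times S^{d_\ell-1}} \prod_{i=1}^\ell \frac{d\sigma(e)}{(1 - \langle u_i,e_i\rangle^2)^{t_i/2}} \lesssim 1,
\end{equation*}
after writing $\pi_{e_i}(x_i-y_i) = |x_i-y_i|\,\pi_{e_i}(u_i)$ with $u_i = (x_i-y_i)/|x_i-y_i|$ and noting that $|\pi_{e_i}(u_i)|^2 = 1 - \langle u_i,e_i\rangle^2$.

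Next, I would control the kernel by its distance to the singular set $\{\pm u_i\} \subset S^{d_i-1}$ in each factor. Since $1 - \langle u_i,e_i\rangle^2 = (1-\langle u_i,e_i\rangle)(1+\langle u_i,e_i\rangle)$, the quantity $(1-\langle u_i,e_i\rangle^2)^{1/2}$ is comparable to $\mathrm{dist}(e_i,\{u_i,-u_i\})$ on $S^{d_i-1}$. Hence for dyadic $r_{i,k_i} \sim 2^{-k_i}$ the annulus $A_i^{k_i} := \{e_i : \mathrm{dist}(e_i,\{\pm u_i\}) \sim 2^{-k_i}\}$ contributes at most $2^{k_i t_i}$ to the integrand in the $i$-th slot.

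Then I would invoke the product ball condition on $\sigma$: covering each $A_i^{k_i}$ by $O(1)$ balls $B^{d_i}(\cdot,2^{-k_i})$ (two points $\pm u_i$, independent of $k_i$), the hypothesis gives
\begin{equation*}
\sigma\Bigl((A_1^{k_1}\times\cdots\times A_\ell^{k_\ell}) \cap (S^{d_1-1}\times\cdots\times S^{d_\ell-1})\Bigr) \lesssim \prod_{i=1}^\ell 2^{-k_i\beta_i}.
\end{equation*}
Summing over the dyadic decomposition in each factor gives
\begin{equation*}
\int \prod_i (1-\langle u_i,e_i\rangle^2)^{-t_i/2}\, d\sigma(e) \lesssim \sum_{k_1,\ldots,k_\ell \ge 0} \prod_{i=1}^\ell 2^{-k_i(\beta_i - t_i)},
\end{equation*}
a geometric series that converges precisely when $t_i < \beta_i$ for each $i$, yielding the desired uniform bound. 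The main (and essentially only) obstacle is organising the dyadic decomposition correctly around the two antipodal singularities in each factor so that the product ball hypothesis on $\sigma$ can be applied cleanly; once that is done, the geometric summation is automatic under the hypothesis $t_i < \beta_i$, and the proof concludes by inserting this uniform bound back into the $\nu\times\nu$ integral.
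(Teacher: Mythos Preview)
Your proof is correct and follows essentially the same approach as the paper: both unfold the pushforward $\pi_e\nu$ to a $\nu\times\nu$ integral, swap the order of integration, and reduce to the uniform bound $\int \prod_i |\pi_{e_i}(x_i)|^{-t_i}\,d\sigma(e)\lesssim \prod_i |x_i|^{-t_i}$ via a dyadic decomposition around the singular directions and the product ball condition on $\sigma$. The only cosmetic differences are that you normalize to unit vectors $u_i$ and track the two antipodal singularities $\pm u_i$ explicitly, whereas the paper simply notes that $\{e_i:|\pi_{e_i}(x_i)|\le 2^{-j_i}|x_i|\}$ lies in a cap of radius $\sim 2^{-j_i}$.
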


\begin{proof}
We first notice that 
$$
\int_{S^{d_1-1}\times\cdots\times S^{d_\ell-1}} \frac{d\sigma(e)}{\prod_{i=1}^\ell |\pi_{e_i}(x_i)|^{t_i}}\lesssim \frac{1}{\prod_{i=1}^\ell |x_i|^{t_i}}.
$$
Indeed, for fixed $x_i\in \mathbb{R}^{d_i}$, the subset $\{e_i\in S^{d_i-1}: |\pi_{e_i}(x_i)|\leq 2^{-j_i}|x_i|\}$ is contained in a spherical cap of radius $\sim 2^{-j_i}$, and hence, by decomposing into regions $2^{-j_i-1}|x_i|<|\pi_{e_i}(x_i)|\leq 2^{-j_i}|x_i|$, we get that 
$$
\int_{S^{d_1-1}\times\cdots\times S^{d_\ell-1}} \frac{d\sigma(e)}{\prod_{i=1}^\ell |\pi_{e_i}(x_i)|^{t_i}} \lesssim \sum_{j_1,\cdots,j_\ell=0}^{\infty}\, \prod_{i=1}^\ell \frac{2^{-j_i\beta_i}}{(2^{-j_i}|x_i|)^{t_i}}\lesssim \frac{1}{\prod_{i=1}^\ell |x_i|^{t_i}}\,,
$$
provided that $t_i<\beta_i, \forall i=1,\cdots,\ell$. 

Therefore,
\begin{align*}
&\int_{S^{d_1-1}\times\cdots\times S^{d_\ell-1}}\int_{e_1^\perp\times\cdots\times e_\ell^\perp}\int_{e_1^\perp\times\cdots\times e_\ell^\perp}\frac{d\pi_e\nu(z)d\pi_e\nu(w)}{\prod_{i=1}^\ell |z_i-w_i|^{t_i}}\,d\sigma(e)\\
= &  \int_{S^{d_1-1}\times\cdots\times S^{d_\ell-1}} \int_{\mathbb{R}^d}\int_{\mathbb{R}^d} \frac{d\nu(x)d\nu(y)}{\prod_{i=1}^\ell|\pi_{e_i}(x_i-y_i)|^{t_i}} d\sigma(e) \\ \lesssim & \int_{\mathbb{R}^d}\int_{\mathbb{R}^d} \frac{d\nu(x)d\nu(y)}{\prod_{i=1}^\ell |x_i-y_i|^{t_i}}\,.
\end{align*}

\end{proof}

\begin{proof}[Proof of Theorem \ref{thm: multipara proj0}]
Fix $\alpha>d-1$ and $\beta>2(d-1)-\alpha$, we choose $\delta>0$ such that $\alpha-(d-1)>\delta>(d-1)-\beta$. We'll prove that
$$
\int \|P^{(\ell)}_y\mu\|^p_{L^p(S^{d_1-1}\times\cdots\times S^{d_{\ell}-1})}\,d\nu(y) \lesssim C_\alpha(\mu)^p C_\beta(\nu),
$$
whenever 
\begin{equation} \label{range of p}
1<p<\min\left\{\frac{2(d_i-1)}{2(d_i-1)-\delta}\,, \frac{\beta-d+d_i}{d_i-1-\delta}: i=1,\cdots, \ell \right\}.
\end{equation}
Note that the choice of $\delta$ guarantees that the right hand side of \eqref{range of p} lies in $(1,2)$, so the range of $p$ is nonempty.

Let $\{\psi_n: n\in\mathbb{N}\}$ be a standard approximation of identity on $\mathbb{R}^d$. Given $\mu \in \mathcal{M}(\mathbb{R}^d)$, $\mu_n:=\mu * \psi_n \to \mu$ weakly and so $P^{(\ell)}_y\mu_n \to P^{(\ell)}_y\mu$ weakly for $y\in {\rm supp}\,\nu$, due to the assumption that the supports of $\mu$ and $\nu$ have separated projections. Then, by Fatou's Lemma,
$$
\int \|P^{(\ell)}_y\mu\|^p_{L^p(S^{d_1-1}\times\cdots\times S^{d_{\ell}-1})}\,d\nu(y) \leq\liminf_{n\to\infty} \int \|P^{(\ell)}_y\mu_n\|^p_{L^p(S^{d_1-1}\times\cdots\times S^{d_{\ell}-1})}\,d\nu(y)\,.
$$
Note that $C_\alpha(\mu_n)\leq C_\alpha(\mu), \forall n\in\mathbb{N}$. Therefore, to prove Theorem \ref{thm: multipara proj0}, we may assume that $\mu\in C_c^\infty(\mathbb{R}^d)$, and hence $\pi_e\mu\in C_c^\infty(e_1^\perp\times\cdots\times e_\ell^\perp)$ for $e\in S^{d_1-1}\times\cdots\times S^{d_{\ell}-1}$. From Lemma \ref{projlem1} it follows that
\begin{align}
&\int \|P^{(\ell)}_y\mu\|^p_{L^p(S^{d_1-1}\times\cdots\times S^{d_{\ell}-1})}\,d\nu(y) \label{eq:projlem}\\
\lesssim & \int \|P^{(\ell)}_y\mu_y\|^p_{L^p(S^{d_1-1}\times\cdots\times S^{d_{\ell}-1})}\,d\nu(y) = \int_{S^{d_1-1}\times\cdots\times S^{d_{\ell}-1}} \|\pi_e\mu\|_{L^p(\pi_e\nu)}^p d\mathcal{H}^{d-\ell}(e)\,. \notag 
\end{align}

Next, for fixed $e\in S^{d_1-1}\times\cdots\times S^{d_{\ell}-1}$, we estimate $\|\pi_e\mu\|_{L^p(\pi_e\nu)}$. By duality, we can choose non-negative $f$ with $\|f\|_{L^q(\pi_e\nu)}=1$ and $q=p'$ such that 
\begin{align}
&\|\pi_e\mu\|_{L^p(\pi_e\nu)} = \int_{e_1^\perp\times\cdots\times e_\ell^\perp} \pi_e\mu\cdot f d\pi_e\nu \label{eq:dual}\\
\leq & \left(\int_{e_1^\perp\times\cdots\times e_\ell^\perp} |\widehat{\pi_e \mu}(\xi)|^2\prod_{i=1}^\ell |\xi_i|^\delta\,d\xi \right)^{1/2}\left(\int_{e_1^\perp\times\cdots\times e_\ell^\perp} |\widehat{f d\pi_e\nu}(\xi)|^2\prod_{i=1}^\ell |\xi_i|^{-\delta}\,d\xi\right)^{1/2}. \notag
\end{align}
Moreover,
\begin{align}
&\int_{e_1^\perp\times\cdots\times e_\ell^\perp} |\widehat{f d\pi_e\nu}(\xi)|^2\prod_{i=1}^\ell |\xi_i|^{-\delta}\,d\xi \label{eq:holder}\\
\approx & \int \int f(z)f(w) \prod_{i=1}^\ell |z_i-w_i|^{\delta-d_i+1}\,d\pi_e\nu(z) d\pi_e\nu(w) \notag \\
\leq & \left( \int \int \prod_{i=1}^\ell |z_i-w_i|^{p(\delta-d_i+1)}\,d\pi_e\nu(z) d\pi_e\nu(w)\right)^{1/p}\,, \notag
\end{align}
where the second inequality follows from H\"older's inequality.

Now, in view of \eqref{eq:projlem}, by duality again, we can choose non-negative $g$ with $\|g\|_{L^q(S^{d_1-1}\times\cdots\times S^{d_{\ell}-1})}=1$ and $2<q=p'$ such that
\begin{align*}
&\left(\int \|P^{(\ell)}_y\mu\|^p_{L^p(S^{d_1-1}\times\cdots\times S^{d_{\ell}-1})}\,d\nu(y)\right)^{1/p} \\
\lesssim & \int_{S^{d_1-1}\times\cdots\times S^{d_{\ell}-1}} \|\pi_e\mu\|_{L^p(\pi_e\nu)} g(e) d\mathcal{H}^{d-\ell}(e) \leq A^{1/2}B^{1/2}\,,
\end{align*}
where the second inequality follows from the estimate of $\|\pi_e\mu\|_{L^p(\pi_e\nu)}$ as in \eqref{eq:dual} and \eqref{eq:holder}, and 
\begin{equation*}
    A=\int_{S^{d_1-1}\times\cdots\times S^{d_{\ell}-1}} \int_{e_1^\perp\times\cdots\times e_\ell^\perp} |\widehat{\pi_e \mu}(\xi)|^2\prod_{i=1}^\ell |\xi_i|^\delta\,d\xi
    d\mathcal{H}^{d-\ell}(e),
\end{equation*}
\begin{equation*}
    B=\int_{S^{d_1-1}\times\cdots\times S^{d_{\ell}-1}}  \left( \int \int \prod_{i=1}^\ell |z_i-w_i|^{p(\delta-d_i+1)}\,d\pi_e\nu(z) d\pi_e\nu(w)\right)^{1/p} g(e)^2 d\mathcal{H}^{d-\ell}(e)\,.
\end{equation*}
To complete the proof of Theorem \ref{thm: multipara proj0}, we will show that $A\lesssim C_\alpha(\mu)^2$ and $B\lesssim C_\beta(\nu)^{2/p}$.

Note that for $(\xi_1,\cdots,\xi_\ell)\in e_1^\perp\times\cdots\times e_\ell^\perp$,
\begin{align*}
&(\pi_{e_1}\times\cdots\times\pi_{e_\ell}\mu)^\wedge (\xi_1,\cdots,\xi_\ell)\\
= & (\textrm{Id}_{\mathbb{R}^{d_1}}\times\pi_{e_2}\times\cdots\times\pi_{e_\ell}\mu)^\wedge (\eta_1,\xi_2,\cdots,\xi_\ell) = \cdots = \widehat{\mu} (\eta_1,\cdots,\eta_\ell)\,,
\end{align*}
where $\eta_i=\xi_i$ is viewed as a point in $\mathbb{R}^{d_i}$. Hence, by applying Lemma \ref{projlem2} repeatedly in each variable and then applying Lemma \ref{projlem3}, we get that
\begin{equation} \label{A}
A\lesssim \int_{\mathbb{R}^d} |\widehat{\mu}(\eta)|^2\prod_{i=1}^{\ell} |\eta_i|^{\delta-1}\,d\eta \lesssim C_\alpha(\mu)^2\,,
\end{equation}
provided that $d_i+\delta-1<\alpha-d+d_i, \forall i=1,\cdots,\ell$, i.e. $\alpha>d-1+\delta$, which indeed holds by our choice of $\delta$. 

To estimate $B$, we first apply H\"older's inequality
$$
B^p \leq \int_{S^{d_1-1}\times\cdots\times S^{d_{\ell}-1}}  \int \int \prod_{i=1}^\ell |z_i-w_i|^{p(\delta-d_i+1)}\,d\pi_e\nu(z) d\pi_e\nu(w) g(e)^p d\mathcal{H}^{d-\ell}(e)
$$
and observe that
\begin{align*}
 &\int_{(B^{d_1}(x_1,r_1)\times\cdots\times B^{d_\ell}(x_\ell,r_\ell))\cap(S^{d_1-1}\times\cdots\times S^{d_\ell-1})} g(e)^p d\mathcal{H}^{d-\ell}(e)\\
 \leq & \left(\int_{S^{d_1-1}\times\cdots\times S^{d_\ell-1}} g(e)^q d\mathcal{H}^{d-\ell}(e)\right)^{p/q} \left(\mathcal{H}^{d-\ell} (B^{d_1}(x_1,r_1)\times\cdots\times B^{d_\ell}(x_\ell,r_\ell))\right)^{2-p}\\
 \lesssim & \prod_{i=1}^\ell r_i^{(d_i-1)(2-p)}\,.
\end{align*}
Therefore, by Lemma \ref{projlem4} and Lemma \ref{projlem3},
\begin{equation}\label{B}
 B^p\lesssim \int_{\mathbb{R}^d}\int_{\mathbb{R}^d} \frac{d\nu(x)d\nu(y)}{\prod_{i=1}^\ell |x_i-y_i|^{p(d_i-1-\delta)}} \lesssim C_\beta(\nu)^2\,,
\end{equation}
as desired, provided that 
$$p(d_i-1-\delta)<(d_i-1)(2-p), \text{ i.e. } p<\frac{2(d_i-1)}{2(d_i-1)-\delta}, \,\,\forall i=1,\cdots,\ell\,,
$$
and
$$p(d_i-1-\delta)<\beta -d + d_i, \text{ i.e. } p<\frac{\beta-d+d_i}{d_i-1-\delta}, \,\,\forall i=1,\cdots,\ell\,,
$$
which is indeed the case by our choice of $p$ as in \eqref{range of p}.
\end{proof}

When $\alpha\leq d-1$, Theorem \ref{thm: multipara proj0} is not applicable. However, we will show below that by combining with orthogonal projections, the multiparameter radial projection result does hold true on a set of product subspaces of certain dimensions depending on $\alpha$. Here is some notation.

Fix $\vec{d}=(d_1,\cdots,d_\ell)$ and $\vec{k}=(k_1,\cdots,k_\ell)$ with $\ell\geq 1$ and $2\leq k_i \leq d_i$. Let $d=d_1+\cdots d_\ell$ and $k=k_1+\cdots+k_\ell$. Denote 
$$
V=(V_1,\cdots,V_\ell)\in G(d_1,k_1)\times\cdots\times G(d_\ell,k_\ell)=:G,
$$
and
$$
d\gamma(V)=d\gamma_{d_1,k_1}(V_1)\cdots d\gamma_{d_\ell,k_\ell}(V_\ell).
$$ 
Let $\pi_i:\mathbb{R}^d\to\mathbb{R}^{d_i}$, $\pi_{V_i}:\mathbb{R}^{d_i}\to V_i$ and
$$
\pi_V=\pi_{V_1}\times\cdots\times\pi_{V_\ell}: \mathbb{R}^d=\mathbb{R}^{d_1}\times\cdots\times\mathbb{R}^{d_\ell}\to V_1 \times\cdots\times V_\ell =\mathbb{R}^k\,,
$$
be orthogonal projections. Given $V=(V_1,\cdots,V_\ell)\in G$ and $w=(w_1,\cdots,w_\ell)\in V_1\times\cdots\times V_\ell$, define the $\ell$-parameter radial projection
$$
P^{(\ell)}_w: V_1\times\cdots\times V_\ell \setminus \{z: z_i=w_i \text{ for some } i=1,\cdots,\ell\} \to S^{k_1-1}\times \cdots \times S^{k_\ell-1}
$$ 
by
$$
P^{(\ell)}_w (z) = \left(\frac{z_1-w_1}{|z_1-w_1|},\cdots, \frac{z_\ell-w_\ell}{|z_\ell-w_\ell|}\right)\,.
$$ 

\begin{theorem}[Multiparameter radial projection theorem]\label{thm: multipara proj} Let $M:=\max\{d-d_i+k_i-1:i=1,\cdots \ell\}$. Suppose $\alpha> M$, $\beta>2M-\alpha$ and $\beta>M-(k_i-1), \forall i=1,\cdots,\ell$. Then there exists $p=p(\alpha,\beta)>1$ such that the following holds. Suppose that $\mu, \nu \in \mathcal{M}(\mathbb{R}^d)$ with 
\begin{enumerate}
\item $\pi_i({\rm supp}\,\mu) \cap \pi_i({\rm supp}\,\nu) = \varnothing$, $\forall i=1,\ldots, \ell$;
\item $\mu(B(x,r))\leq  C_\alpha(\mu) r^\alpha$, $\nu(B(x,r))\leq C_\beta(\nu)r^\beta$, $\forall x\in \mathbb{R}^d$, $\forall r>0$. 
\end{enumerate}
Let $G'=\{V=(V_1,\cdots,V_\ell)\in G : \pi_{V_i}\circ\pi_i({\rm supp}\,\mu)\cap\pi_{V_i}\circ\pi_i({\rm supp}\,\nu)=\varnothing, \forall i=1,\cdots,\ell\}$. Then,  
\[
\int_{G'} \left(\int_{V_1\times\cdots\times V_\ell} \|P^{(\ell)}_w \pi_V\mu\|^p_{L^p(S^{k_1-1}\times\cdots\times S^{k_{\ell}-1})}\,d\pi_V \nu(w) \right)^{1/p}d\gamma(V) \lesssim C_\alpha(\mu) C_\beta(\nu)^{1/p},
\]
where the implicit constant depends only on $d$ and the diameter of ${\rm supp}\,\mu \cup {\rm supp}\,\nu$.
\end{theorem}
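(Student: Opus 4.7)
The strategy is to mirror the proof of Theorem \ref{thm: multipara proj0}, but with an outer Grassmannian averaging inserted and with iterated applications of Lemma \ref{projlem2} used to lift Fourier integrals from the projected subspaces back to $\mathbb{R}^d$. A standard mollification reduces to $\mu \in C_c^\infty(\mathbb{R}^d)$, so that $\pi_V\mu$ is smooth and compactly supported on $V_1\times\cdots\times V_\ell$ for each $V$. For fixed $V \in G'$, Lemma \ref{projlem1} applied within the slice $V$ gives
\[
\int_{V_1\times\cdots\times V_\ell} \|P^{(\ell)}_w \pi_V \mu\|^p_{L^p}\,d\pi_V\nu(w) \lesssim \int_{S^{k_1-1}\times\cdots\times S^{k_\ell-1}} \|\pi_e \pi_V \mu\|^p_{L^p(\pi_e \pi_V \nu)}\,d\mathcal{H}^{k-\ell}(e),
\]
where for $e = (e_1,\dots,e_\ell)$ with $e_i \in S^{k_i-1}\subset V_i$ the composition $\pi_e\pi_V$ projects onto $W_1\times\cdots\times W_\ell$ with $W_i := V_i\cap e_i^\perp \in G(d_i,k_i-1)$.

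For each fixed $V$ I would run the duality scheme from the proof of Theorem \ref{thm: multipara proj0}: pick a dual $f_{V,e}$ for the inner $L^p$-norm, apply Plancherel combined with Cauchy--Schwarz with a small parameter $\delta>0$ to separate, then pick a dual $g_V$ for the spherical $L^p$-norm. This produces a pointwise-in-$V$ estimate
\[
\left(\int \|P^{(\ell)}_w \pi_V \mu\|^p_{L^p}\,d\pi_V\nu(w)\right)^{1/p} \lesssim A_V^{1/2} B_V^{1/2},
\]
with $A_V$ depending only on $\mu$ and $B_V$ only on $\nu$. Integrating over $V$ against $d\gamma$ and applying Cauchy--Schwarz in $V$ reduces the theorem to proving $A := \int A_V\,d\gamma(V) \lesssim C_\alpha(\mu)^2$ and $B := \int B_V\,d\gamma(V) \lesssim C_\beta(\nu)^{2/p}$.

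The quantity $A$ has the form $\int_{G'}\int_{S^{k_1-1}\times\cdots}\int_{W_1\times\cdots\times W_\ell} |\widehat{\pi_e\pi_V\mu}(\xi)|^2 \prod_i |\xi_i|^\delta\,d\xi\,d\mathcal{H}^{k-\ell}(e)\,d\gamma(V)$. The key observation is that $\widehat{\pi_e\pi_V\mu}(\xi)$ depends only on $W_i = V_i\cap e_i^\perp$, not on the full pair $(V_i,e_i)$. After integrating out the $(d_i-k_i)$-sphere of choices of $e_i \in W_i^\perp\cap V_i$, the product measure $d\gamma_{d_i,k_i}(V_i)\,d\mathcal{H}^{k_i-1}(e_i)$ pushes forward up to a constant to $d\gamma_{d_i,k_i-1}(W_i)$. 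Applying Lemma \ref{projlem2} in each component $i$ (with $n = d_i$ and codimension $d_i-k_i+1$) iteratively then lifts the Fourier integral back to $\mathbb{R}^d$, shifting $|\xi_i|^\delta$ to $|\eta_i|^{\delta-(d_i-k_i+1)}$. Lemma \ref{projlem3} now yields $A \lesssim C_\alpha(\mu)^2$ precisely when $k_i-1+\delta < \alpha-d+d_i$ for every $i$, i.e.\ $\alpha > M + \delta$, which is the hypothesis $\alpha > M$ with $\delta$ chosen small.

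For $B$ I would use H\"older's inequality to bound the $f_{V,e}$-factor by the bilinear form $\int\int\prod_i|z_i-w_i|^{p(\delta-k_i+1)}\,d\pi_e\pi_V\nu(z)\,d\pi_e\pi_V\nu(w)$, then apply Lemma \ref{projlem4} followed by Lemma \ref{projlem2} and Lemma \ref{projlem3} to reduce it to a $\nu$-energy bounded by $C_\beta(\nu)^{2/p}$. The three conditions $\alpha > M$, $\beta > 2M-\alpha$, and $\beta > M-(k_i-1)$ are exactly what is needed to ensure a nonempty admissible range of $p \in (1,2)$ for which all H\"older and Cauchy--Schwarz steps go through simultaneously, directly generalizing the range in the proof of Theorem \ref{thm: multipara proj0}. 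The main obstacle is the careful bookkeeping of dimensions and Grassmannian Haar measures through the iterated applications of Lemma \ref{projlem2}, in particular verifying the compatibility of the change of variables $(V_i,e_i) \mapsto W_i$ with the relevant product measures in each component.
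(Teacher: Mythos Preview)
Your proposal is correct and follows essentially the same approach as the paper. The paper organizes the Grassmannian integration slightly differently: for each fixed $V\in G'$ it runs the full proof of Theorem~\ref{thm: multipara proj0} inside $V_1\times\cdots\times V_\ell$ (treating $V_i$ as $\mathbb{R}^{k_i}$), thereby already performing the $e$-integration via Lemma~\ref{projlem2} to reach
\[
A_V\lesssim \int_{V_1\times\cdots\times V_\ell}|\widehat{\pi_V\mu}(\eta)|^2\prod_i|\eta_i|^{\delta-1}\,d\eta,
\qquad
B_V^p\lesssim \int_{V_1\times\cdots\times V_\ell}\frac{|\widehat{\pi_V\nu}(\eta)|^2}{\prod_i|\eta_i|^{k_i-p(k_i-1-\delta)}}\,d\eta,
\]
and then applies Lemma~\ref{projlem2} a \emph{second} time in each component to pass from $V_i$ to $\mathbb{R}^{d_i}$. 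You instead collapse the two steps into a single change of variable $(V_i,e_i)\mapsto W_i=V_i\cap e_i^\perp\in G(d_i,k_i-1)$ followed by one application of Lemma~\ref{projlem2} with codimension $d_i-k_i+1$; this is an equivalent bookkeeping and lands on the same Fourier integral over $\mathbb{R}^d$. One small imprecision: the fiber over a fixed $W_i$ is not ``$e_i\in W_i^\perp\cap V_i$'' (that intersection is one-dimensional for each $V_i$) but rather the set of unit vectors $e_i\in W_i^\perp\subset\mathbb{R}^{d_i}$, which is indeed $S^{d_i-k_i}$ once $V_i=W_i\oplus\mathbb{R} e_i$ is recovered from $e_i$; the pushforward conclusion you state is correct by orthogonal invariance.
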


\begin{proof}
By the conditions that $\alpha> M$, $\beta>2M-\alpha$ and $\beta>M-(k_i-1), \forall i=1,\cdots,\ell$, we can choose $\delta>0$ such that  $\alpha-M>\delta>M-\beta$ and $k_i-1>\delta, \forall i=1,\cdots,\ell$. We'll prove that
\[
\int_{G'} \left(\int_{V_1\times\cdots\times V_\ell} \|P^{(\ell)}_w \pi_V\mu\|^p_{L^p(S^{k_1-1}\times\cdots\times S^{k_{\ell}-1})}\,d\pi_V \nu(w) \right)^{1/p}d\gamma(V) \lesssim C_\alpha(\mu) C_\beta(\nu)^{1/p},
\]
whenever 
\begin{equation} \label{rangeofp}
1<p<\min\left\{2, \frac{2(k_i-1)}{2(k_i-1)-\delta}\,, \frac{\beta-d+d_i}{k_i-1-\delta}: i=1,\cdots, \ell \right\}.
\end{equation}
Note that the choice of $\delta$ guarantees that the right hand side of \eqref{rangeofp} lies in $(1,2]$, so the range of $p$ is nonempty. By a similar limiting argument as in the proof of Theorem \ref{thm: multipara proj0}, we may assume that $\mu\in C_c^\infty(\mathbb{R}^d)$. 

Following the proof of Theorem \ref{thm: multipara proj0} up to \eqref{A} and \eqref{B}, we can tell that
$$
\left(\int_{V_1\times\cdots\times V_\ell} \|P^{(\ell)}_w \pi_V\mu\|^p_{L^p(S^{k_1-1}\times\cdots\times S^{k_{\ell}-1})}\,d\pi_V \nu(w) \right)^{1/p} \lesssim A^{1/2}B^{1/2}\,,
$$
where 
\begin{equation} \label{A'}
A\lesssim \int_{V_1\times\cdots\times V_\ell} |\widehat{\pi_V\mu}(\eta)|^2\prod_{i=1}^{\ell} |\eta_i|^{\delta-1}\,d\eta\,,
\end{equation}
and 
\begin{equation} \label{B'}
B^p\lesssim \int_{V_1\times\cdots\times V_\ell} \frac{|\widehat{\pi_V\nu}(\eta)|^2}{\prod_{i=1}^{\ell} |\eta_i|^{k_i-p(k_i-1-\delta)}}\,d\eta\,,
\end{equation}
provided that $p(k_i-1-\delta)<(k_i-1)(2-p)$, i.e. $p<\frac{2(k_i-1)}{2(k_i-1)-\delta}, \forall i=1,\cdots,\ell$, which indeed holds by our choice of $p$.

Therefore,
\begin{align*}
&\int_{G'} \left(\int_{V_1\times\cdots\times V_\ell} \|P^{(\ell)}_w \pi_V\mu\|^p_{L^p(S^{k_1-1}\times\cdots\times S^{k_{\ell}-1})}\,d\pi_V \nu(w) \right)^{1/p}d\gamma(V) \\
\lesssim & \left(\int_G A d\gamma(V)\right)^{1/2}\left(\int_G B d\gamma(V)\right)^{1/2}\,,
\end{align*}
and hence it suffices to prove that 
$$
\int_G A d\gamma(V) \lesssim C_\alpha(\mu)^2 \quad \text{ and } \quad \int_G B d\gamma(V) \lesssim C_\beta(\nu)^{2/p}.
$$

Indeed, because of \eqref{A'} and \eqref{B'}, by applying Lemma \ref{projlem2} repeatedly in each variable and then applying Lemma \ref{projlem3}, we get that
$$
\int_G A d\gamma(V) \lesssim \int_{\mathbb{R}^d} \frac{|\widehat{\mu}(\xi)|^2}{\prod_{i=1}^\ell |\xi_i|^{d_i-k_i+1-\delta}} \,d\xi \lesssim C_\alpha(\mu)^2\,,
$$
provided that $k_i-1+\delta<\alpha-d+d_i$, i.e. $\alpha>d-d_i+k_i-1+\delta, \forall i=1,\cdots,\ell$, which is indeed the case by the choice of $\delta$, and
$$
\left(\int_G B d\gamma(V)\right)^p \leq \int_G B^p d\gamma(V) \lesssim \int_{\mathbb{R}^d} \frac{|\widehat{\nu}(\xi)|^2}{\prod_{i=1}^\ell |\xi_i|^{d_i-p(k_i-1-\delta)}} \,d\xi \lesssim C_\beta(\nu)^2\,,
$$
provided that $p(k_i-1-\delta)<\beta-d+d_i$, i.e. $p<\frac{\beta-d+d_i}{k_i-1-\delta}, \forall i=1,\cdots,\ell$, which is guaranteed by the choice of $p$. This completes the proof.
\end{proof}

With Theorem \ref{thm: multipara proj} in tow, we are now ready to prove Lemma \ref{lem: step1main}.

\begin{proof}[Proof of Lemma \ref{lem: step1main}]


Recall from the construction of sets $E_1, E_2$, that for each $i=1,\cdots, \ell$, there is a pre-selected $k_i$-dimensional subspace $W_i\subset \mathbb{R}^{d_i}$ (with $k_i$ equal to $d_i-\frac{d_{\min}}{2}+1$ when $d_{\min}$ is even, $d_i-\frac{d_{\min}}{2}+\frac{1}{2}$ when $d_{\min}$ is odd) such that the projections of $E_1, E_2$ onto each $W_i$ are separated by distance $\gtrsim 1$.

For each $j=1,2$, there exists a Frostman measure $\mu_j$ supported on $E_j$ satisfying
\[
\mu_j(B(x,r))\lesssim r^\alpha,\quad \forall x\in \mathbb{R}^d,\, \forall r>0.
\]

For each $1\leq i\leq \ell$, since $\pi_{W_i}\circ\pi_i(E_1), \pi_{W_i}\circ\pi_i(E_2)$ are well separated, one has that ${\rm dist}(\pi_{V_i}\circ\pi_i(E_1), \pi_{V_i}\circ\pi_i(E_2))\gtrsim 1$ for all $V_i$ in a small enough neighborhood of $W_i$ in $G(d_i, k_i)$, the Grassmanian of $k_i$-dimensional subspaces in $\mathbb{R}^{d_i}$. 

Since $\alpha>d-\frac{d_{\min}}{2}$, the pair of measures $\mu_1, \mu_2$ satisfy the conditions in Theorem \ref{thm: multipara proj}. Therefore, applying Theorem \ref{thm: multipara proj}, there must exist some $V_i\in G(d_i, k_i)$ in a small neighborhood of $W_i$, $i=1,\cdots,\ell$, such that the inner integral in the concluded estimate in Theorem \ref{thm: multipara proj} is finite, in other words, for some $p>1$,
\begin{equation}\label{eqn: proj finite}
\int_{V_1\times\cdots\times V_\ell} \|P^{(\ell)}_w \pi_V\mu_2\|^p_{L^p(S^{k_1-1}\times\cdots\times S^{k_{\ell}-1})}\,d\pi_V \mu_1(w)<+\infty.
\end{equation}

Our goal is to estimate the set $\text{Bad}_{j_1,\cdots,j_k}^{T_{m+1},\cdots, T_k}$ by a properly defined bad region in $V_1\times\cdots\times V_\ell$ and to apply (\ref{eqn: proj finite}).

More precisely, note that for each $i=1,\cdots,m$ and each $j_i\geq 1$, each tube $T_i\in\mathbb{T}^i_{j_i}$ under consideration is projected by $\pi_{V_i}$ to a tube of comparable dimensions (with side length $\sim 1$ in the long direction, and $\sim R_{j_i}^{-\frac{1}{2}+\delta}$ in the rest of the directions). This is because that if $T_i$ appears in a product bad tube, then by definition of $\text{Bad}_{j_1,\cdots,j_k}^{T_{m+1},\cdots, T_k}$ it must intersect both $\pi_i(E_1)$ and $\pi_i(E_2)$, where $\pi_i:\,\mathbb{R}^d\rightarrow \mathbb{R}^{d_i}$ denotes the orthogonal projection. Therefore, the collection $\mathbb{T}_{j_i}^i$ gives rise to a collection $\tilde{\mathbb{T}}^i_{j_i}$ of tubes in $V_i$. We also denote $\tilde{\mathbb{T}}^i=\cup_{j_i\geq 1} \tilde{\mathbb{T}}^i_{j_i}$.

For every $(\tilde{T}_1,\cdots, \tilde{T}_m)\in \tilde{\mathbb{T}}^1_{j_1}\times \cdots\times \tilde{\mathbb{T}}^m_{j_m}$, define it to be \emph{bad} if
\[
\begin{split}
&\pi_V\mu_2(4\tilde{T}_1\times \cdots\times 4\tilde{T}_m\times B_1^{V_{m+1}}\times\cdots\times B_1^{V_\ell})\geq\\ & \quad C_0(R_{j_{m+1}},\cdots, R_{j_k})\cdot\begin{cases} \prod_{i=1}^m R_{j_{i}}^{-(\frac{d_{i}}{2}-\frac{d_{\min}}{4}) + c(\alpha) \delta}, & d_{\min} \text{ is even}; \\ \prod_{i=1}^m R_{j_{i}}^{-(\frac{d_{i}}{2}-\frac{d_{\min}}{4}-\frac{1}{4}) + c(\alpha) \delta}, & d_{\min} \text{ is odd}. \end{cases},
\end{split}
\]where $B_1^{V_i}$ denotes the unit ball in $V_i$ centered at the origin.

Then, it is easy to see that for any given $(T_{m+1},\cdots, T_k)\in \mathbb{T}^{m+1}_{j_{m+1}}\times\cdots\times \mathbb{T}^k_{j_k}$,
\[
\mu_1\times \mu_2 (B_{j_1,\cdots, j_k}^{T_{m+1},\cdots, T_{k}})\leq \pi_V\mu_1\times \pi_V\mu_2(\widetilde{\text{Bad}}_{j_1,\cdots, j_m}),
\]with the bad region defined as
\[
\begin{split}
\widetilde{\text{Bad}}_{j_1,\cdots, j_m}:=&\big\{(\tilde{y},\tilde{x})\in \pi_V(E_1)\times \pi_V(E_2):\, \exists (\tilde{T}_1,\cdots, \tilde{T}_m) \in \tilde{\mathbb{T}}^1_{j_1}\times \cdots\times \tilde{\mathbb{T}}^m_{j_m} \text{ s.t. }\\
&\quad \tilde{T}_1\times \cdots \times \tilde{T}_m \text{ is bad},\, \tilde{y}_i, \tilde{x}_i\in 2\tilde{T}_i,\,\forall i=1,\cdots, m  \big\}.
\end{split}
\]

Indeed, if $(T_1,\cdots,T_m)\in \mathbb{T}^1_{j_1}\times \cdots \mathbb{T}^m_{j_m}$ is such that $T_1\times\cdots\times T_k$ is ($1,\cdots, k$)-bad, then there holds
\[
\begin{split}
&\pi_V \mu_2(4\pi_V(T_1)\times\cdots\times 4\pi_V(T_m)\times B_1^{V_{m+1}}\times\cdots\times B_1^{V_\ell})\\
\geq & \mu_2(4T_1\times \cdots \times4 T_m\times B_1^{d_{m+1}}\times\cdots\times B_1^{d_\ell})\\
\geq & \mu_2(4T_1\times\cdots \times 4T_k\times B_1^{d_{k+1}}\times\cdots\times B_1^{d_\ell}).
\end{split}
\]Hence, the image of $T_1\times\cdots \times T_m$ under the projection $\pi_V$ is contained in some bad $\tilde{T}_1\times\cdots\times \tilde{T}_m$, if one chooses 
\[
C_0(R_{j_{m+1}},\cdots, R_{j_k})=\begin{cases} \prod_{i=m+1}^k R_{j_{i}}^{-(\frac{d_{i}}{2}-\frac{d_{\min}}{4}) + c(\alpha) \delta}, & d_{\min} \text{ is even}; \\ \prod_{i=m+1}^k R_{j_{i}}^{-(\frac{d_{i}}{2}-\frac{d_{\min}}{4}-\frac{1}{4}) + c(\alpha) \delta}, & d_{\min} \text{ is odd}. \end{cases}.
\]It thus suffices to estimate $\pi_V\mu_1\times \pi_V\mu_2(\widetilde{\text{Bad}}_{j_1,\cdots, j_m})$.

Write
\[
\begin{split}
\pi_V\mu_1\times \pi_V\mu_2(\widetilde{\text{Bad}}_{j_1,\cdots, j_m})=&\int \pi_V\mu_2(\widetilde{\text{Bad}}_{j_1,\cdots, j_m}(\tilde{y})) \, d\pi_V\mu_1(\tilde{y})\\
\leq & \int P_{\tilde{y}}^{(\ell)}\pi_V \mu_2 \left(P_{\tilde{y}}^{(\ell)}\big(\widetilde{\text{Bad}}_{j_1,\cdots, j_m}(\tilde{y}) \big) \right) \, d\pi_V \mu_1(\tilde{y}),
\end{split}
\]where
\[
\widetilde{\text{Bad}}_{j_1,\cdots, j_m}(\tilde{y})=\left(\bigcup_{\substack{(\tilde{T}_1\times\cdots\times \tilde{T}_m)\in \tilde{\mathbb{T}}^1_{j_1}\times \cdots\times \tilde{\mathbb{T}}^m_{j_m} \text{ bad}\\ \tilde{y}_i\in 2\tilde{T}_i,\,i=1,\cdots, m}} 2\tilde{T}_1\times\cdots\times 2\tilde{T}_m\right)\times \left( \prod_{i=m+1}^\ell B_1^{V_i}\right).
\]
Then, by H\"older's inequality and the multiparameter radial projection estimate (\ref{eqn: proj finite}), we get that
\begin{align*}
&\pi_V\mu_1\times \pi_V\mu_2(\widetilde{\text{Bad}}_{j_1,\cdots, j_m})\\
\leq & \sup_{\tilde y} \left| P_{\tilde{y}}^{(\ell)}\big(\widetilde{\text{Bad}}_{j_1,\cdots, j_m}(\tilde{y}) \big) \right|^{1-\frac 1p} \int \|P^{(\ell)}_{\tilde y} \pi_V\mu_2\|_{L^p} \,d\pi_V \mu_1(\tilde y)\\
\lesssim & \sup_{\tilde y} \left| P_{\tilde{y}}^{(\ell)}\big(\widetilde{\text{Bad}}_{j_1,\cdots, j_m}(\tilde{y}) \big) \right|^{1-\frac 1p}\,.
\end{align*}

It remains to estimate $\left| P_{\tilde{y}}^{(\ell)}\big(\widetilde{\text{Bad}}_{j_1,\cdots, j_m}(\tilde{y}) \big) \right|$. For every $\tilde{y}\in V_1\times\cdots\times V_\ell$ and $\tilde{T}_i\in \tilde{\mathbb{T}}^i_{j_i}$, let $A(\tilde{T}_i)$ be the cap on the sphere $S^{k_i-1}\subset V_i$ whose center corresponds to the direction of the long axis of $\tilde{T}_i$ and with radius $\sim R_{j_i}^{-\frac{1}{2}+\delta}$. Since ${\rm dist}(\pi_{V_i}\circ \pi_i(E_1), \pi_{V_i}\circ \pi_i(E_2))\gtrsim 1$, one has $P_{\tilde{y}_i}(\pi_{V_i}\circ \pi_i (E_2)\cap 4\tilde{T}_i)\subset A(\tilde{T}_i)$, where $P_{\tilde{y}_i}$ denotes the one-parameter radial projection map centered at $\tilde{y}_i\in V_i$. Similarly, it is easy to see that the product cap $A(\tilde{T}_1)\times \cdots \times A(\tilde{T}_m)\times S^{k_{m+1}-1}\times\cdots \times S^{k_\ell-1}$ contains the image of $4\tilde{T}_1\times\cdots\times 4\tilde{T}_m\times B_1^{V_{m+1}}\times \cdots \times B_1^{V_\ell}$ under the radial projection $P_{\tilde{y}}^{(\ell)}$.

Therefore, one can cover $P_{\tilde{y}}^{(\ell)}\big(\widetilde{\text{Bad}}_{j_1,\cdots, j_m}(\tilde{y}) \big)$ by product caps of the form $A(\tilde{T}_1)\times \cdots \times A(\tilde{T}_m)\times S^{k_{m+1}-1}\times\cdots \times S^{k_\ell-1}$, where each $\tilde{T}_1\times\cdots\times \tilde{T}_m$ is bad. For any such product cap, there holds
\[
\begin{split}
&P_{\tilde{y}}^{(\ell)}\pi_V \mu_2\left(A(\tilde{T}_1)\times \cdots \times A(\tilde{T}_m)\times S^{k_{m+1}-1}\times\cdots \times S^{k_\ell-1} \right) \\
\geq &\pi_V \mu_2(4\tilde{T}_1\times\cdots\times 4\tilde{T}_m\times B_1^{V_{m+1}}\times \cdots \times B_1^{V_\ell})\\
\geq &C_0(R_{j_{m+1}},\cdots, R_{j_k})\cdot\begin{cases} \prod_{i=1}^m R_{j_{i}}^{-(\frac{d_{i}}{2}-\frac{d_{\min}}{4}) + c(\alpha) \delta}, & d_{\min} \text{ is even}; \\ \prod_{i=1}^m R_{j_{i}}^{-(\frac{d_{i}}{2}-\frac{d_{\min}}{4}-\frac{1}{4}) + c(\alpha) \delta}, & d_{\min} \text{ is odd}. \end{cases}
\end{split}
\]

By the Vitali covering lemma, there exists a disjoint subcollection $\mathcal{C}$ of such product caps so that their dilations by a constant (say, $5$) cover $P_{\tilde{y}}^{(\ell)}\big(\widetilde{\text{Bad}}_{j_1,\cdots, j_m}(\tilde{y}) \big)$. Hence, the total number of disjoint product caps in this covering is bounded by
\[
\#\mathcal{C}\leq C_0(R_{j_{m+1}},\cdots, R_{j_k})^{-1}\cdot\begin{cases} \prod_{i=1}^m R_{j_{i}}^{\frac{d_{i}}{2}-\frac{d_{\min}}{4} - c(\alpha) \delta}, & d_{\min} \text{ is even}; \\ \prod_{i=1}^m R_{j_{i}}^{\frac{d_{i}}{2}-\frac{d_{\min}}{4}-\frac{1}{4} - c(\alpha) \delta}, & d_{\min} \text{ is odd}. \end{cases}
\]Therefore, 
\[
\begin{split}
\left| P_{\tilde{y}}^{(\ell)}\big(\widetilde{\text{Bad}}_{j_1,\cdots, j_m}(\tilde{y}) \big) \right| 
\lesssim &\left( \prod_{i=1}^m R_{j_i}^{(-\frac{1}{2}+\delta)(k_i-1)}\right)\cdot\#\mathcal{C}\\
\lesssim & C(R_{j_{m+1}},\cdots, R_{j_k})\prod_{i=1}^m R_{j_i}^{-(c(\alpha)-d_i+\frac{d_{\min}}{2})\delta},
\end{split}
\]where $|\cdot|$ denotes the product surface measure on $S^{k_1-1}\times\cdots\times S^{k_\ell-1}$. Then the desired estimate (\ref{eqn: bad mu}) follows by taking $c(\alpha)$ sufficiently large.



\end{proof}

\section{Step 2: proof of (\ref{eqn: mainest2})}\label{sec: dec}
\setcounter{equation}0

In this section, we prove (\ref{eqn: mainest2}), which will conclude the proof of our main theorem. 

Recall the setup. Let $d=d_1+\cdots+d_\ell$ with $\ell\geq1$ and $d_{\min}=\min\{d_i:i=1,\cdots,\ell\}\geq 2$. Let $E_1, E_2 \subset B^d(0,1)$ with ${\rm dist}(\pi_i(E_1),\pi_i(E_2))\gtrsim 1, \forall i=1,\cdots,\ell$. And $\mu_j$ is a  probability measure supported on $E_j$ such that 
$$
\mu_j(B(x,r))\lesssim r^\alpha, \quad \forall x\in \mathbb{R}^d, \forall r>0, \quad {\rm where} \, j=1,2.
$$
The good part of $\mu_1$ with respect to $\mu_2$ is defined as
\[
\mu_{1,g}=\sum_{k=0}^{\ell} \sum_{1\leq i_1<\cdots<i_k\leq\ell} \bigg(\prod_{i\neq i_1,\cdots,i_k} M^i_0\bigg) \bigg(\sum_{\substack{(T_{i_1},\cdots,T_{i_k})\in \mathbb{T}^{i_1}\times\cdots\times\mathbb{T}^{i_k} \\ (i_1,\cdots,i_k){\textrm -good}}} M^{i_1}_{T_{i_1}}\cdots M^{i_k}_{T_{i_k}}\mu_1\bigg).
\]
We want to prove that 
\begin{equation}\label{L2G}
\int_{E_2}  \| d^x_*\mu_{1,g}\|_{L^2}^2 d \mu_2(x) < \infty\,,
\end{equation}
if 
\[
\alpha>\begin{cases} d-\frac{d_{\min}}{2}+\frac{1}{4},& d_{\min} \text{ is even},\\ d-\frac{d_{\min}}{2}+\frac{1}{4}+\frac{1}{4d_{\min}},& d_{\min} \text{ is odd}.\end{cases}
\]

By the definition of a pushforward measure, we have 
\[
d^x_*\mu_{1,g}(t_1,\cdots,t_\ell)=t_1^{d_1-1}\cdots t_\ell^{d_\ell-1}\mu_{1,g}*^{(1)}\sigma_{t_1}*^{(2)}\sigma_{t_2}\cdots *^{(\ell)} \sigma_{t_\ell} (x)\,,
\]
where $\sigma_{t_i}$ denotes the normalized surface measure on $t_i S^{d_i-1}$ and $*^{(i)}$
stands for the convolution in the $i$-th variable. Because of the assumption that ${\rm dist}(\pi_i(E_1),\pi_i(E_2))\gtrsim 1, \forall i=1,\cdots,\ell$, we only need to consider $t_1,\cdots,t_\ell \sim 1$. Hence,
\begin{align}
&\int_{E_2}  \| d^x_*\mu_{1,g}\|_{L^2}^2 d \mu_2(x) \label{eq:dblint}\\
\sim & \int_{E_2} \int_{\mathbb{R}^\ell_+} |\mu_{1,g}*^{(1)}\sigma_{t_1}*^{(2)} \cdots *^{(\ell)} \sigma_{t_\ell} (x)|^2 t_1^{d_1-1}\cdots t_\ell^{d_\ell-1} \,dt\,d \mu_2(x) \notag\\
= & \int_{E_2} \int_{\mathbb{R}^\ell_+} |\mu_{1,g}*^{(1)}\widehat{\sigma_{r_1}}*^{(2)} \cdots *^{(\ell)} \widehat{\sigma_{r_\ell}} (x)|^2 r_1^{d_1-1}\cdots r_\ell^{d_\ell-1} \,dr\,d \mu_2(x) \notag\\
= &  \int_{\mathbb{R}^\ell_+} \int_{E_2} |\mu_{1,g}*^{(1)}\widehat{\sigma_{r_1}}*^{(2)} \cdots *^{(\ell)} \widehat{\sigma_{r_\ell}} (x)|^2 \,d \mu_2(x) \, r_1^{d_1-1}\cdots r_\ell^{d_\ell-1} \,dr\,, \notag
\end{align}
where the second equation follows by applying an $L^2$ identity of Liu \cite[Theorem 1.9]{Liu18} iteratively in each variable. For each fixed $r=(r_1,\cdots,r_\ell)\in \mathbb{R}^\ell_+$, we have the following multiparameter weighted Fourier extension estimate for the above inner integral over $E_2$. Denote $\vec{\sigma}_r=\sigma_{r_1}\otimes\cdots\otimes\sigma_{r_\ell}$.

\begin{lemma}\label{lem:MWFE}
Let $p=\max\left\{\frac{2(d_i+1)}{d_i-1}: i=1,\cdots,\ell\right\}=\frac{2(d_{\min}+1)}{d_{\min} -1}$. Let
$$
\eta_i=
\frac{2(d-\alpha-d_i)}{p}  -(\frac{d_i}{2}-\frac{d_{\min}}{4})(1-\frac{2}{p})
$$
when $d_{\min}$ is even, and
$$
\eta_i=
\frac{2(d-\alpha-d_i)}{p} -(\frac{d_i}{2}-\frac{d_{\min}}{4}-\frac 14)(1-\frac{2}{p})
$$
when $d_{\min}$ is odd. Then, for any $0<\alpha\leq d$ and any $\epsilon>0$ with $\epsilon+\eta_i<0, \forall i=1,\cdots,\ell$, there exists a constant $C_\epsilon$ such that the following holds. For any $r=(r_1,\cdots,r_\ell)\in \mathbb{R}^\ell_+$,

\begin{align}\label{eq:MWFE}
  &\int_{E_2} |\mu_{1,g}*^{(1)}\widehat{\sigma_{r_1}}*^{(2)} \cdots *^{(\ell)} \widehat{\sigma_{r_\ell}} (x)|^2 \,d \mu_2(x) \\
  \leq & C_\epsilon \bigg(\prod_{i=1}^\ell  \min\left(r_i^{\eta_i+\epsilon{-(d_i-1)}}, 1\right) \bigg) \int |\widehat{\mu_1}|^2\psi_r\,d\xi\,,  \notag
\end{align}
where
\[
\psi_r(\xi)=\prod_{i=1}^\ell \psi_{r_i}(\xi_i).
\]Here, each $\psi_{r_i}$ is a weight function that is $\sim 1$ on the annulus $r_i-1\leq |\xi_i|\leq r_i+1$ and decays off it. To be precise, we could take
$$
\psi_{r_i}(\xi_i)=\left(1+\left|r_i-|\xi_i|\right|\right)^{-100d_i}\,.
$$
\end{lemma}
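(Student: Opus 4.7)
This is a multiparameter weighted restriction-type inequality at frequency $r=(r_1,\dots,r_\ell)$, and the whole point is that the gain over a trivial bound must come from having pruned away the bad product wave packets in Section \ref{sec: bad}.

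Step 1: Dyadic reduction. The convolution with $\widehat{\sigma_{r_i}}$ in the $i$-th variable essentially restricts $\xi_i$ to the annulus $|\xi_i|\sim r_i$. Any term in (\ref{DefG}) carrying a factor $M^i_0$ has $\xi_i$-support in $\{|\xi_i|\lesssim R_0\}$, and hence contributes $\textrm{RapDec}(r_i)$ whenever $r_i\gg R_0$; conversely, when $r_i\lesssim R_0$, the prefactor $\min(r_i^{\eta_i+\epsilon-(d_i-1)},1)$ equals $1$ and a crude bound suffices. I may therefore assume all $r_i\gg R_0$ and reduce to the single $k=\ell$ term in (\ref{DefG}) at the dyadic scale $R_{j_i}\sim r_i$ in each variable:
\[
g_r:=\sum_{\substack{(T_1,\dots,T_\ell)\in\mathbb{T}^1_{j_1}\times\cdots\times\mathbb{T}^\ell_{j_\ell}\\ \text{good}}}M^1_{T_1}\cdots M^\ell_{T_\ell}\mu_1.
\]

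Step 2: Pass from $L^2(\mu_2)$ to the sharp restriction exponent $p=2(d_{\min}+1)/(d_{\min}-1)$ using Jensen (since $\mu_2$ is a probability measure), and then apply a \emph{multiparameter refined decoupling} on $\prod_iS^{d_i-1}$ to $F_r:=g_r*^{(1)}\widehat{\sigma_{r_1}}\cdots*^{(\ell)}\widehat{\sigma_{r_\ell}}$. The decoupling, at $p$ equal to the worst restriction exponent among the $S^{d_i-1}$, should give, up to $\prod_iR_{j_i}^\epsilon$,
\[
\|F_r\|_{L^p(\mu_2)}^p\lesssim_\epsilon\Big(\prod_iR_{j_i}^\epsilon\Big)\mathcal{M}^{1-\tfrac{2}{p}}\sum_{\text{good }(T_1,\dots,T_\ell)}\|(M^1_{T_1}\cdots M^\ell_{T_\ell}\mu_1)*\widehat{\sigma_{r_1}}\cdots\|_p^p,
\]
where $\mathcal{M}$ is any upper bound on the $\mu_2$-mass of a product slab $\prod_i4T_i$. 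By goodness one may take $\mathcal{M}=\prod_iR_{j_i}^{-(d_i/2-d_{\min}/4)+c(\alpha)\delta}$ in the even case (and the stated odd-case analogue), which is precisely the combination producing the $(d_i/2-d_{\min}/4)(1-2/p)$ piece of $\eta_i$.

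Step 3: Bernstein plus orthogonality on the individual wave packets. After the final convolution, each summand is Fourier-supported in a product of tangential slabs to $\prod_iS^{d_i-1}$ of dimensions approximately $R_{j_i}^{1/2}\times\cdots\times R_{j_i}^{1/2}\times 1$ in $\mathbb{R}^{d_i}$, and iterating Bernstein in each variable gives the expected $L^p$-to-$L^2$ loss per packet. Summing squared $L^2$ norms and using the near-orthogonality of wave packets in each frequency variable, together with Plancherel, yields
\[
\sum_{(T_1,\dots,T_\ell)}\|(M^1_{T_1}\cdots M^\ell_{T_\ell}\mu_1)*^{(1)}\widehat{\sigma_{r_1}}\cdots\|_2^2\lesssim\int|\widehat{\mu_1}|^2\psi_r\,d\xi.
\]
Combining Steps 2 and 3 and accounting for the Jacobian $\prod_ir_i^{d_i-1}$ that relates $\widehat{\sigma_{r_i}}$ to the standard spherical averaging reproduces exactly the claimed $\prod_i\min(r_i^{\eta_i+\epsilon-(d_i-1)},1)$ prefactor in front of $\int|\widehat{\mu_1}|^2\psi_r\,d\xi$.

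\textbf{Main obstacle.} The crucial new ingredient is the multiparameter refined decoupling in Step 2. Iterating the one-parameter refined decoupling of \cite{GIOW} in each variable sequentially would require an independent dyadic pigeonholing on tube-incidence counts in each variable, costing an unabsorbable $\prod_i\log R_{j_i}$; the novelty is to perform a single simultaneous pigeonhole on the full product tube structure, so that the total loss is only a single $\prod_iR_{j_i}^\epsilon$. Once this multiparameter refined decoupling is available, the rest of the argument is a careful multiparameter bookkeeping of the Du--Guth--Iosevich--Ou--Wang--Zhang framework.
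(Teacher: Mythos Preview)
Your overall architecture (localize to dyadic scales, multiparameter refined decoupling, use the good-tube threshold, finish by orthogonality and Plancherel) matches the paper, but Step~2 has a genuine gap in how the weight $\mu_2$ is handled, and as a consequence your exponent bookkeeping cannot produce the claimed $\eta_i$.

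Refined decoupling (Theorem~\ref{thm:lRefDec}) is a \emph{Lebesgue} $L^p$ statement: the parameter $M$ counts how many product tubes of $\mathbb{W}$ pass through a small box~$q$, not any $\mu_2$-mass. The inequality you write, $\|F_r\|_{L^p(\mu_2)}^p\lesssim\mathcal{M}^{1-2/p}\sum_T\|f_T\|_p^p$ with $\mathcal{M}=\sup_T\mu_2(\prod_i4T_i)$, is not a consequence of refined decoupling, and Jensen from $L^2(\mu_2)$ to $L^p(\mu_2)$ does not make it one. In the paper the weight is handled differently: after pigeonholing $\|f_T\|_p\sim\lambda$ and a region $Y$ where each box meets $\sim M$ tubes, one smooths $\mu_2$ at scales $1/r_i$ and uses H\"older against Lebesgue measure,
\[
\int_{Y}|f_\lambda|^2\,d\mu_2\le\Big(\int_Y|f_\lambda|^p\,dx\Big)^{2/p}\Big(\int_Y|\mu_2*\eta_{1/r}|^{p/(p-2)}\,dx\Big)^{1-2/p}.
\]
Refined decoupling is applied to the first (Lebesgue) factor and yields $(M/W)^{1-2/p}$. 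A separate double-counting incidence estimate gives $M/W\lesssim \max_T\mu_2(\prod_i4T_i)/\mu_2(\mathcal{N}_{1/r}(Y))$; the denominator cancels against the $\mu_2(\mathcal{N}_{1/r}(Y))^{1-2/p}$ coming from the second H\"older factor, and the numerator is then bounded by the good-tube threshold, producing the $-(\tfrac{d_i}{2}-\tfrac{d_{\min}}{4})(1-\tfrac{2}{p})$ piece of $\eta_i$.

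Crucially, the second H\"older factor also uses the $\alpha$-Frostman bound on $\mu_2$ via $\|\mu_2*\eta_{1/r}\|_\infty\lesssim\prod_i r_i^{d-\alpha}$, and this is precisely what contributes the $\tfrac{2(d-\alpha)}{p}$ portion of $\eta_i$. Nothing in your Steps~2--3 invokes the Frostman condition on $\mu_2$, so your accounting (good threshold + Bernstein/orthogonality + ``Jacobian'') cannot manufacture the $(d-\alpha)$ dependence in $\eta_i$. Incidentally, the $-(d_i-1)$ in the statement is not a Jacobian; it comes from the geometry of the slabs $\theta(T_i)$ and the size $|T_i|$ in the per-packet estimate, together with the surface-measure factor $|\vec\sigma_r(\theta(T))|^{1/2}$.
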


Then, from \eqref{eq:dblint}, Lemma \ref{lem:MWFE}, and the inequality\footnote[2]{If in the integral over the interval $[0,1]$ the factor $r_i^{d_i-1}$ is replaced by $r_i^{\eta_i+\epsilon}$, then this inequality no longer holds in the case that $\eta_i<-1$. This is the reason that we put $\min(r_i^{\eta_i+\epsilon-(d_i-1)},1)$ instead of $r_i^{\eta_i+\epsilon-(d_i-1)}$ on the right hand side of the estimate in Lemma \ref{lem:MWFE}. 

A correction for the one parameter case in higher dimensions: in \cite[Lemma 4.1]{DIOWZ} we also need to deal with a similar issue, and the factor $r^{-\frac{d}{2(d+1)}-\frac{(d-1)\alpha}{d+1}+\epsilon-(d-1)}$ should be changed to $\min(r^{-\frac{d}{2(d+1)}-\frac{(d-1)\alpha}{d+1}+\epsilon-(d-1)},1)$. The new estimate still holds following the proof there.}

$$
 \int_0^1 r_i^{d_i-1}\psi_{r_i}(\xi_i)dr_i + \int_1^\infty r_i^{\eta_i+\epsilon} \psi_{r_i}(\xi_i)dr_i \lesssim |\xi_i|^{\eta_i+\epsilon}\,, 
$$
it follows that
$$
\int_{E_2}  \| d^x_*\mu_{1,g}\|_{L^2}^2 d \mu_2(x) \lesssim_\epsilon \int_{\mathbb{R}^d} |\widehat{\mu_1}(\xi)|^2 \prod_{i=1}^\ell |\xi_i|^{\eta_i+\epsilon}\,d\xi\,,
$$
where the right hand side is finite, by Lemma \ref{projlem3}, provided that $$d_i+\eta_i<\alpha-d+d_i, \textrm{ i.e. } \alpha>d+\eta_i,\,\, \forall i=1,\cdots,\ell.$$
By a direct calculation, $\alpha>d+\eta_i$ if and only if
\[
\alpha>\begin{cases} d-\frac{d_i}{2}+\frac{1}{4},& d_{\min} \text{ is even},\\ d-\frac{d_i}{2}+\frac{1}{4}+\frac{1}{4d_{\min}},& d_{\min} \text{ is odd},\end{cases}
\]
which concludes the desired result \eqref{L2G}.

It remains to prove Lemma \ref{lem:MWFE}. A crucial tool is a multiparameter refined decoupling theorem. 

Here is the setup. Let $d=d_1+\cdots+d_\ell$ with $\ell\geq1$ and $d_i\geq 2, \forall i=1,\cdots,\ell$.
Let $S_i \subset \mathbb{R}^{d_i}$ be a compact and strictly convex $C^2$ hypersurface with Gaussian curvature $\sim 1$. For any $\epsilon>0$, choose $0<\delta\ll \epsilon$. For any $R_i\geq 1$, decompose the $1$-neighborhood of $R_i S_i$ in $\mathbb{R}^{d_i}$ into blocks $\theta_i$ of dimensions $R_i^{1/2} \times \cdots \times R_i^{1/2} \times 1$.  For each $\theta_i$, let $\mathbb{T}_{\theta_i}$ be a finitely overlapping covering of $B^{d_i}(0,1)$ by tubes $T_i$ of dimensions $R_i^{-1/2 + \delta} \times\cdots\times R_i^{-1/2+\delta} \times 1$ with long axis perpendicular to $\theta_i$, and let $\mathbb{T}_i = \bigcup_{\theta_i} \mathbb{T}_{\theta_i}$. Each $T_i\in \mathbb{T}_i$ belongs to $\mathbb{T}_{\theta_i}$ for a single $\theta_i$, and we let $\theta(T_i)$ denote this $\theta_i$. Let $\mathbb{T}=\{T=T_1\times\cdots\times T_\ell: T_i\in\mathbb{T}_i\}$. For $T=T_1\times\cdots\times T_\ell\in\mathbb{T}$, denote $\theta(T)=\theta(T_1)\times\cdots\times\theta(T_\ell)$.
We say that $f$ is \emph{microlocalized} to $(T,\theta(T))$ if $f$ is essentially supported in $2T$ and $\widehat{f}$ is essentially supported in $2\theta(T)$.

\begin{theorem}[$\ell$-parameter refined decoupling]\label{thm:lRefDec}
Let $p\geq 2$. Let $f = \sum_{T \in \mathbb{W}} f_T$, where $\mathbb{W} \subset \mathbb{T}$ and $f_T$ is microlocalized to $(T, \theta(T))$.
Let $Y$ be a union of boxes $q=q_1\times\cdots\times q_\ell$ in $B_1^d$ each of which intersects at most $M$ product tubes $T \in \mathbb{W}$, where $q_i$'s are $R_i^{-1/2}$-cubes in $B_1^{d_i}$. Then,
\begin{enumerate}
\item $\forall \epsilon>0$, \[\| f \|_{L^p(Y)} \leq C_\epsilon \bigg(\prod_{i=1}^\ell R_i^{\gamma_i+\epsilon}\bigg) M^{\frac{1}{2} - \frac{1}{p}} \bigg(\sum_{T \in \mathbb{W}} \| f_T \|_{L^p}^p \bigg)^{1/p}.
\]
\item 
Suppose further that $\|f_T\|_{L^p}$ is roughly constant among all $T\in \mathbb{W}$ and denote $W=|\mathbb{W}|$. Then, $\forall\epsilon>0$,
\[\| f \|_{L^p(Y)} \leq C_\epsilon \bigg(\prod_{i=1}^\ell R_i^{\gamma_i+\epsilon}\bigg) \bigg(\frac MW\bigg)^{\frac{1}{2} - \frac{1}{p}} \bigg(\sum_{T \in \mathbb{W}} \| f_T \|_{L^p}^2 \bigg)^{1/2}.
\]
\end{enumerate}
Here
\[
\gamma_i=\begin{cases} 0,& 2\leq p\leq \frac{2(d_i+1)}{d_i-1},\\ \frac{d_i-1}{4}-\frac{d_i+1}{2p}, & p\geq  \frac{2(d_i+1)}{d_i-1}.\end{cases}
\]
\end{theorem}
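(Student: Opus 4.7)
The plan is to prove Theorem \ref{thm:lRefDec} in two stages: first iterate the one-parameter $\ell^p$-decoupling theorem of Bourgain--Demeter to reduce the left-hand side to a sum over product blocks $\theta = \theta_1 \times \cdots \times \theta_\ell$, and then exploit the spatial localization of product wave packets together with the refined condition on $Y$ to pass from the block-sum to a tube-sum with the prescribed $M^{1/2 - 1/p}$ factor.

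For the first stage, observe that for each $i$, $f$ has Fourier support in the $i$-th variable (uniformly in the others) contained in the $1$-neighborhood of $R_i S_i \subset \mathbb{R}^{d_i}$. The one-parameter Bourgain--Demeter $\ell^p$-decoupling inequality with exponent $\gamma_i$ as displayed in the theorem then applies slice by slice in that variable, and via a weighted version adapted to $Y$---as in the derivation of the one-parameter refined decoupling theorem in \cite{GIOW}---one may replace $L^p(\mathbb{R}^{d_i})$ on both sides by $L^p(Y)$. Iterating over $i = 1, \ldots, \ell$ yields
\[
\|f\|_{L^p(Y)} \lesssim_\epsilon \prod_{i=1}^\ell R_i^{\gamma_i + \epsilon} \bigg( \sum_\theta \|f_\theta\|_{L^p(Y)}^p \bigg)^{1/p},
\]
where $f_\theta := \sum_{T \in \mathbb{W}:\, \theta(T) = \theta} f_T$ and $\theta$ runs over product blocks.

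For the second stage, I would decompose $Y$ into product boxes $q = q_1 \times \cdots \times q_\ell$ with $|q_i| = R_i^{-1/2}$. By rapid decay of $f_T$ off $T$, only tubes $T$ with $T \cap q \neq \varnothing$ contribute non-negligibly to $f_\theta$ on $q$; moreover, on each such box every $f_T$ is essentially constant (since its Fourier support in the $i$-th variable has short-direction diameter $\lesssim R_i^{1/2}$). Combining local $L^2$-orthogonality of the wave packets (inherited in the multiparameter setting from the one-parameter case via the tensor structure of $\theta = \theta_1 \times \cdots \times \theta_\ell$) with the elementary inequality $\big(\sum_{j \leq N} |a_j|^2\big)^{p/2} \leq N^{p/2-1} \sum_j |a_j|^p$ and the refinement hypothesis $\#\{T \in \mathbb{W} : T \cap q \neq \varnothing\} \leq M$, one obtains
\[
\|f_\theta\|_{L^p(q)}^p \lesssim M^{p/2-1} \sum_{T:\, \theta(T)=\theta,\, T \cap q \neq \varnothing} \|f_T\|_{L^p(q)}^p.
\]
Summing over $q \subset Y$ (using finite overlap of tubes with boxes) and over $\theta$ gives $\sum_\theta \|f_\theta\|_{L^p(Y)}^p \lesssim M^{p/2-1} \sum_T \|f_T\|_{L^p}^p$, which combined with the first stage proves assertion (1). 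Assertion (2) is then immediate from (1) via the identity $\big(\sum_T \|f_T\|_{L^p}^p\big)^{1/p} = W^{1/p - 1/2} \big(\sum_T \|f_T\|_{L^p}^2\big)^{1/2}$, valid when $\|f_T\|_{L^p}$ is roughly constant across $T \in \mathbb{W}$.

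The hardest step will be the second stage: one must rigorously justify local $L^2$-orthogonality of product wave packets on each multiparameter box $q$, carefully control the tails arising from tubes $T$ far from $q$ (whose total number is a priori very large), and verify that the tensor-product structure of the tubes and blocks permits a clean passage from the one-parameter local orthogonality argument of \cite{GIOW} to the multiparameter version. The iteration in the first stage, while notationally heavy, is essentially Fubini combined with the classical one-parameter $\ell^p$-decoupling theorem, with the exponent $\gamma_i$ dictated by the Bourgain--Demeter theorem for the $(d_i-1)$-dimensional hypersurface $S_i \subset \mathbb{R}^{d_i}$.
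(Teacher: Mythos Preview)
Your two-stage breakdown contains a genuine gap. The Stage~1 inequality
\[
\|f\|_{L^p(Y)} \lesssim_\epsilon \prod_{i=1}^\ell R_i^{\gamma_i+\epsilon}\Big(\sum_\theta \|f_\theta\|_{L^p(Y)}^p\Big)^{1/p}
\]
does not follow from Bourgain--Demeter and is false for general $Y$. The Bourgain--Demeter theorem yields $\ell^2$ (not $\ell^p$) decoupling on the \emph{full} unit ball, and neither the passage from an $\ell^2$ to an $\ell^p$ sum nor the restriction from $B_1^d$ to an arbitrary union of $\prod_i R_i^{-1/2}$-boxes is free: already on a single box $q$ where every $f_\theta$ equals $1$ one loses a factor $N^{1-1/p}$ with $N$ the number of caps, far worse than $\prod_i R_i^{\gamma_i}$. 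Your appeal to ``a weighted version adapted to $Y$ as in \cite{GIOW}'' is circular, since that weighted version \emph{is} the refined decoupling theorem you are trying to prove.

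Stage~2 also does not function as written. For a fixed cap $\theta$ and box $q$ there is essentially a single tube $T\in\mathbb{W}$ with $\theta(T)=\theta$ meeting $q$, so your displayed inequality is trivially true and does not exploit the bound $M$; and wave packets associated with \emph{different} caps are not $L^2$-orthogonal on a box of the finest scale $\prod_i R_i^{-1/2}$, where every $f_\theta$ is roughly constant. The paper's proof (following \cite[Theorem~4.2]{GIOW}) does not first decouple to the finest scale and then localize. It proceeds by induction on scale: one covers $B_1^d$ by intermediate-scale boxes, applies the \emph{iterated} classical $\ell^2$-decoupling at a small parameter $K$ on each such box, and then invokes the inductive hypothesis at scale $R/K$; the gain $M^{1/2-1/p}$ is accumulated across the induction through the incidence hypothesis on $Y$, not inserted at the final step. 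Your derivation of part~(2) from part~(1) is correct and matches the paper's remark.
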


Note that the power $\gamma_i$ of $R_i$ is the same as that in the classical decoupling theorem for the corresponding $p$. The two parts of Theorem \ref{thm:lRefDec} are equivalent: part (1) implies part (2) trivially; conversely, part (1) follows from part (2) combined with a dyadic pigeonholing about the quantity $\|f_T\|_{L^p}$.

The proof of Theorem \ref{thm:lRefDec} follows in the exact same way as that of its one-parameter analogue \cite[Corollary 4.3]{GIOW}, except that the use of the classical decoupling theorem is replaced by its iterated version. The details are left to interested readers.

\begin{proof}[Proof of Lemma \ref{lem:MWFE}]
Given $r=(r_1,\cdots,r_\ell)\in \mathbb{R}^\ell_+$, suppose $r_i>10 R_0$ for $i\in\{i_1,\cdots,i_k\}$ and $r_i \leq 10 R_0$ for $i\notin \{i_1,\cdots,i_k\}$. Here $0\leq k\leq \ell$ and $1\leq i_1<\cdots<i_k\leq\ell$. To ease the notation, in the following we'll consider the case $\{i_1,\cdots,i_k\}=\{1,\cdots,k\}$ (the computation for the most general case is similar). Then, by definition \eqref{DefG} of $\mu_{1,g}$, one has that $\mu_{1,g}*^{(1)}\widehat{\sigma_{r_1}}*^{(2)} \cdots *^{(\ell)} \widehat{\sigma_{r_\ell}}$ is equal to
$$
\sum_{\substack{R_{j_1}\sim r_1,\\
\cdots,R_{j_k}\sim r_k} } \bigg(\prod_{i=k+1}^\ell M^i_0\bigg) \bigg(\sum_{\substack{(T_1,\cdots,T_k)\in \mathbb{T}^1_{j_1}\times\cdots\times\mathbb{T}^k_{j_2} \\ (1,\cdots,k){\textrm -good}}} M^1_{T_1}\cdots M^k_{T_k}\mu_1\bigg)*^{(1)}\widehat{\sigma_{r_1}}*^{(2)} \cdots *^{(\ell)} \widehat{\sigma_{r_\ell}}.
$$

Fix $p=\max\left\{\frac{2(d_i+1)}{d_i-1}: i=1,\cdots,\ell\right\}=\frac{2(d_{\min}+1)}{d_{\min} -1}$. Let $\eta_1$ be a bump function adapted to the unit ball $B^d_1$ and define $f_{T_1\cdots T_k}$ to be
\[
\eta_1\bigg(\bigg(\prod_{i=k+1}^\ell M^i_0\bigg) \bigg(M^1_{T_1}\cdots M^k_{T_k}\mu_1\bigg)*^{(1)}\widehat{\sigma_{r_1}}*^{(2)} \cdots *^{(\ell)} \widehat{\sigma_{r_\ell}}\bigg).
\]
Denote $x=(x',x'')$, where $x'=(x_1,\cdots,x_k)$ and $x''=(x_{k+1},\cdots,x_\ell)$.
Then for fixed $x''$, $f_{T_1\cdots T_k}(\cdot,x'')$ is microlocalized in $(T,\theta(T))$, where $T=T_1\times\cdots\times T_k$ and $\theta(T)=\theta(T_1)\times\cdots\times\theta(T_k)$, following the same deduction in \cite[Section 5]{GIOW}. 

For the sake of convenience, we write $A\lessapprox B$ if $A\leq C_\epsilon\left(\prod_{i=1}^k R_i^{\epsilon}\right)B, \forall \epsilon>0$. When $r_{k+1},\cdots,r_\ell\leq 10 R_0$,  we want to prove that
\begin{equation*}
\int_{E_2} |\mu_{1,g}*^{(1)}\widehat{\sigma_{r_1}}*^{(2)} \cdots *^{(\ell)} \widehat{\sigma_{r_\ell}} (x)|^2 \,d \mu_2(x) \lessapprox \bigg(\prod_{i=1}^k r_i^{\eta_i-(d_i-1)}\bigg) \int |\widehat{\mu_1}|^2\psi_r\,d\xi\,,
\end{equation*}
where $\eta_i$ is as given in Lemma \ref{lem:MWFE}. 
Applying dyadic pigeonholing, one can find $\lambda$ such that
\begin{equation} \label{eq:dyala}
\int_{E_2} |\mu_{1,g}*^{(1)}\widehat{\sigma_{r_1}}*^{(2)} \cdots *^{(\ell)} \widehat{\sigma_{r_\ell}} (x)|^2 \,d \mu_2(x) \lessapprox \int |f_{ \lambda}(x)|^2\,d\mu_2(x),
\end{equation}
where
\[
f_{\lambda}=\sum_{T_1\times\cdots\times T_k \in \mathbb{W}_{\lambda}}f_{T_1\cdots T_k}.
\]
Here, we define $\mathbb{W}_{\lambda}$ to be
\begin{equation*}
\begin{aligned}
 \bigcup_{\substack{R_{j_1}\sim r_1,\\
\cdots,R_{j_k}\sim r_k}}{}
& \Big\{ T_1\times\cdots\times T_k: (T_1,\cdots, T_k) \in \mathbb{T}^1_{j_1}\times\cdots\times \mathbb{T}^k_{j_k} \text{ is ($1,\cdots,k$)-good},\\
& \qquad\qquad\qquad\qquad\qquad\| f_{T_1\cdots T_k} \|_{L^p(dx)} \sim \lambda \Big\}\,,
\end{aligned}
\end{equation*}
and denote $W=|\mathbb{W}_\lambda|$.

Next, we pigeonhole to obtain the region $Y$, as a union of boxes $q=q_1\times\cdots\times q_k$ each of which intersects $\sim M$ product tubes $T \in \mathbb{W}_\lambda$, where $q_i$'s are $R_i^{-1/2}$-cubes in $B_1^{d_i}$, such that
\begin{equation} \label{eq:dyaY}
\int |f_{ \lambda}(x)|^2\,d\mu_2(x) \lessapprox \int_{Y\times B_1^{d''}} |f_{ \lambda}(x)|^2\,d\mu_2(x)\,.
\end{equation}
Here $d''=d_{k+1}+\cdots+d_\ell$.

To bound the right hand side of \eqref{eq:dyaY}, we notice that the Fourier support of $f_\lambda$ is essentially contained in the $1$-neighborhood of $r_1S^{d_1-1}\times\cdots\times r_\ell S^{d_\ell-1}$. So we can replace $\mu_2$ by $\mu_2*\eta_{\frac{1}{r_1},\cdots,\frac{1}{r_k}}$, where $\eta_{\frac{1}{r_1},\cdots,\frac{1}{r_k}}$ is a bump function with integral $1$ essentially supported on $B^{d_1}_{\frac{1}{r_1}}\times\cdots\times B^{d_k}_{\frac{1}{r_k}}\times B^{d''}_1$. Then, by H\"older we get that
\begin{align}
&\int_{Y\times B^{d''}_1} |f_{ \lambda}(x)|^2\,d\mu_2(x) \notag\\
\leq &\bigg(\int_{Y\times B^{d''}_1} |f_{ \lambda}(x)|^p \,dx\bigg)^{\frac 2p} \bigg(\int_{Y\times B^{d''}_1} |\mu_2*\eta_{\frac{1}{r_1},\cdots,\frac{1}{r_k}}|^{\frac{p}{p-2}} \,dx\bigg)^{1-\frac 2p}\,. \label{eq:Holder}
\end{align}

To bound the second factor in \eqref{eq:Holder}, denote $x=(x_1,\cdots,x_k,x'')$ and $r_{\max}=\max\{r_1,\cdots,r_k\}$. By the choice of $\eta_{\frac{1}{r_1},\cdots,\frac{1}{r_k}}$ and the property of $\mu_2$, we get that
$$
\int_{Y\times B^{d''}_1} |\mu_2*\eta_{\frac{1}{r_1},\cdots,\frac{1}{r_k}}| \,dx \lesssim \mu_2\left(\mathcal{N}_{\frac{1}{r_1},\cdots,\frac{1}{r_k}}(Y)\times B^{d''}_1\right)
$$
and
\begin{align*}
&|\mu_2*\eta_{\frac{1}{r_1},\cdots,\frac{1}{r_k}}(x)| \lesssim r_1^{d_1}\cdots r_k^{d_k} \mu_2\left(B^{d_1}(x_1,\frac{1}{r_1})\times\cdots\times B^{d_k}(x_k,\frac{1}{r_k})\times B^{d''}(x'',1)\right)\\
& \lesssim r_1^{d_1}\cdots r_k^{d_k} \bigg(\frac{1}{r_{\max}}\bigg)^\alpha \bigg(\prod_{i=1}^k \big(\frac{r_{\max}}{r_i}\big)^{d_i}\bigg)\bigg(\prod_{i=k+1}^\ell r_{\max}^{d_i}\bigg)=r_{\max}^{d-\alpha}\leq r_1^{d-\alpha}\cdots r_k^{d-\alpha}\,.
\end{align*}
Here $\mathcal{N}_{\frac{1}{r_1},\cdots,\frac{1}{r_k}}(Y):=\left\{(y_1,\cdots,y_k): \exists (x_1,\cdots,x_k)\in Y \text{ s.t. } |x_i-y_i|<\frac{1}{r_i}, \forall i=1,\cdots,k\right\}$.
Therefore,
\begin{align}
&\bigg(\int_{Y\times B^{d''}_1} |\mu_2*\eta_{\frac{1}{r_1},\cdots,\frac{1}{r_k}}|^{\frac{p}{p-2}} \,dx\bigg)^{1-\frac 2p} \label{eq:2nd}\\
\lesssim &\bigg(\prod_{i=1}^k r_i^{(d-\alpha)\frac 2p}\bigg) \bigg(\mu_2\left(N_{\frac{1}{r_1},\cdots,\frac{1}{r_k}}(Y)\times B^{d''}_1\right)\bigg)^{1-\frac 2p}\,. \notag 
\end{align} 
 
Now we bound the first factor in \eqref{eq:Holder}. Let $x=(x',x'')\in Y\times B^{d''}_1$. For each fixed $x''$, we apply $k$-parameter refined decoupling in Theorem \ref{thm:lRefDec} (part (1)) to get that
\begin{align*}
&\bigg(\int_{Y\times B^{d''}_1} |f_{ \lambda}(x)|^p \,dx\bigg)^{\frac 2p} = \bigg(\int_{B^{d''}_1} \int_Y |f_\lambda (x',x'')|^p\,dx'\,dx''\bigg)^{\frac 2p} \\
\lessapprox & \bigg(\prod_{i=1}^k r_i^{\frac{d_i-1}{4}-\frac{d_i+1}{2p}}\bigg)M^{1-\frac 2p} \bigg(\sum_{T\in\mathbb{W}_\lambda}\int_{B^{d''}_1}\int |f_T(x',x'')|^p \,dx'\,dx''\bigg)^{\frac 2p}\\
\sim &\bigg(\prod_{i=1}^k r_i^{\frac{d_i-1}{4}-\frac{d_i+1}{2p}}\bigg) \bigg(\frac MW\bigg)^{1-\frac 2p} \bigg(\sum_{T\in\mathbb{W}_\lambda}\|f_T\|_{L^p(dx)}^2\bigg)\,,
\end{align*}
where the last inequality follows from the condition that $\|f_T\|_{L^p(dx)} \sim \lambda$ for all $T\in\mathbb{W}_\lambda$.
Furthermore, by the definition of $f_T$ we have that
\begin{align*}
&\|f_T\|_{L^p(dx)}\lessapprox |T\times B^{d''}_1|^{\frac 1p} \|f_T\|_{\infty}\\
\lesssim &\bigg|T\times B^{d''}_1\bigg|^{\frac 1p} \bigg|\vec{\sigma}_r\big(\theta(T)\times r_{k+1}S^{d_{k+1}-1}\times\cdots\times  r_\ell S^{d_\ell-1}\big)\bigg|^{\frac 12}\\
& \cdot \bigg\| \bigg[\bigg(\prod_{i=k+1}^\ell M^i_0\bigg) \bigg(M^1_{T_1}\cdots M^k_{T_k}\mu_1\bigg)\bigg]^\wedge \bigg\|_{L^2(d \vec{\sigma}_r)}\,.
\end{align*}
Hence, an orthogonality argument similarly as in \cite[(5.3)]{GIOW} implies that
\begin{align}
&\bigg(\int_{Y\times B^{d''}_1} |f_{ \lambda}(x)|^p \,dx\bigg)^{\frac 2p} \label{eq:1st}\\
\lessapprox 
&\bigg(\prod_{i=1}^k r_i^{\frac{d_i-1}{4}-\frac{d_i+1}{2p}}\bigg) \bigg(\frac MW\bigg)^{1-\frac 2p} \bigg(\prod_{i=1}^k r_i^{-(d_i-1)(\frac 12+\frac 1p)}\bigg) \bigg(\prod_{i=1}^k r_i^{-(d_i-1)} \bigg)\int |\widehat{\mu_1}|^2 \psi_r\, d\xi\,, \notag
\end{align}where the implicit constant depends on $R_0$. 

Finally, we have the following incidence estimate for the term ``$\frac MW$''. By considering the quantity
$$
\sum_{q\subset Y} \sum_{T\in\mathbb{W}_\lambda:q\cap T\neq \varnothing}\mu_2\left(\mathcal{N}_{\frac{1}{r_1},\cdots,\frac{1}{r_k}}(q)\times B^{d''}_1\right)\,,
$$
one has
$$
M\cdot \mu_2\left(\mathcal{N}_{\frac{1}{r_1},\cdots,\frac{1}{r_k}}(Y)\times B^{d''}_1\right) \lesssim W\cdot \max_{T\in \mathbb{W}_\lambda} \mu_2\left(4T_1\times\cdots \times 4T_k \times B^{d''}_1\right)\,.
$$
Then apply the $(1,\cdots,k)$-good
condition to get
\begin{equation} \label{eq:M/W}
\frac MW \lessapprox
\begin{cases}
\frac{ \prod_{i=1}^k r_i^{-(\frac{d_i}{2}-\frac{d_{\min}}{4})}}{\mu_2\left(\mathcal{N}_{\frac{1}{r_1},\cdots,\frac{1}{r_k}}(Y)\times B^{d''}_1\right)}, & d_{\min} \text{ is even}, \\
\frac{\prod_{i=1}^k r_i^{-(\frac{d_i}{2}-\frac{d_{\min}}{4}-\frac 14)}}{\mu_2\left(\mathcal{N}_{\frac{1}{r_1},\cdots,\frac{1}{r_k}}(Y)\times B^{d''}_1\right)}, &d_{\min} \text{ is odd}.
\end{cases}
\end{equation}

From the estimates
\eqref{eq:dyala}-\eqref{eq:M/W} it follows that
\begin{equation*}
\int_{E_2} |\mu_{1,g}*^{(1)}\widehat{\sigma_{r_1}}*^{(2)} \cdots *^{(\ell)} \widehat{\sigma_{r_\ell}} (x)|^2 \,d \mu_2(x) \lessapprox \bigg(\prod_{i=1}^k r_i^{\eta_i-(d_i-1)}\bigg) \int |\widehat{\mu_1}|^2 \psi_r\,d\xi\,,
\end{equation*}
where $\eta_i$ is as given in Lemma \ref{lem:MWFE}, as desired.
\end{proof}

\appendix
\numberwithin{equation}{section}
\section{Proof of Corollary \ref{cor1} and \ref{cor2}}\label{appendix: cor}
\setcounter{equation}0

First, notice that Corollary \ref{cor2} follows directly from Theorem \ref{thm: multipara proj0}. Indeed, by tracking the value of $p$ in the proof of Theorem \ref{thm: multipara proj0}, one observes that for $p\in (1,2)$,
\[
\begin{split}
&{\rm dim}\big( \{x\in\mathbb{R}^d:\, x_i\neq y_i,\, \forall y\in {\rm supp}\mu, \, i=1,\cdots,\ell, \, P_x^{(\ell)}\mu \notin L^p(S^{d_1-1}\times\cdots\times S^{d_\ell-1})\} \big)\\
\leq &2(d-1)-\alpha+\delta(p),
\end{split}
\]where $\delta(p)\to 0$ as $p\to 1$. Corollary \ref{cor2} then follows immediately. 

We now proceed to prove Corollary \ref{cor1}. Fix set $K\subset \mathbb{R}^d$ such that ${\rm dim}(K)>d-1$. We first show that
\begin{equation}\label{eqn: invisible 1}
{\rm dim}({\rm Inv}^{(\ell)}(K)\setminus {\rm Cross}(K))\leq 2(d-1)-{\rm dim}(K),
\end{equation}where
\[
{\rm Cross}(K):=\{x\in \mathbb{R}^d:\,\exists i,\, \exists y\in K, \text{ such that } x_i=y_i\}.
\]

To see this, fix any $\alpha\in (d-1, {\rm dim}(K))$, let $\mu$ be a Frostman measure supported on $K$ satisfying $\mu(B^d(x,r))\lesssim r^\alpha$ for all $x\in \mathbb{R}^d$, $r>0$. Then one observes that
\begin{equation}\label{eqn: invisible 2}
{\rm Inv}^{(\ell)}(K)\setminus {\rm Cross}(K)
\subset S^{(\ell)}(\mu),
\end{equation}recalling that
\[
\begin{split}
S^{(\ell)}(\mu)=&\{x\in \mathbb{R}^d:\, x_i\neq y_i,\, \forall y\in {\rm supp}\mu, \, i=1,\cdots,\ell,\\
&\quad P^{(\ell)}_x\mu \text{ is not absolutely continuous w.r.t. } \mathcal{H}^{d-\ell}\}.
\end{split}
\]

Indeed, for all $x\in {\rm Inv}^{(\ell)}(K)\setminus {\rm Cross}(K)$, by definition, $\mathcal{H}^{d-\ell}\left(P_x^{(\ell)}(K)\right)=0$. Hence, $P_x^{(\ell)}\mu$ must not be absolutely continuous w.r.t. $\mathcal{H}^{d-\ell}$ as $\mu$ is a probability measure.

Therefore, according to Corollary \ref{cor2},  (\ref{eqn: invisible 1}) holds true.

Next, we estimate the part ${\rm Inv}^{(\ell)}(K)\cap {\rm Cross}(K)$. Note that for any $\alpha\in (d-1, {\rm dim}(K))$, there exists a finite collection of subsets $K_1,\cdots, K_n\subset K$ such that $\mathcal{H}^\alpha(K_j)>0$, $\forall j$ and the following property holds: for all $x\in {\rm Cross}(K)$, there exists $j$ so that $x\notin {\rm Cross}(K_j)$. 

To see this, recall that we have assumed that $d_i\geq 2$, $\forall i=1,\cdots,\ell$. Hence, $\alpha>d-1> d-d_i$, $\forall i$. Therefore, according to Lemma \ref{lem: E1E2}, there exist $E_1,E_2\subset K$ with $\mathcal{H}^\alpha(E_j)>0$, $j=1,2$, satisfying that $E_1, E_2$ are well separated in each of the $\ell$ components. Applying Lemma \ref{lem: E1E2} again to $E_1$ (or $E_2$), one can find $E_{1,1}, E_{1,2}\subset E_1$ with $\mathcal{H}^\alpha(E_{1,j})>0$, $j=1,2$, and so that $E_{1,1}, E_{1,2}$ are well separated in each component too. Continuing the process, one can then find subsets $K_1, \cdots, K_{\ell+1}\subset K$ such that $\mathcal{H}^\alpha(K_j)>0$, $\forall j$, satisfying that their projections onto each $\mathbb{R}^{d_i}$ are pairwise well separated. Now, for any $x\in {\rm Cross}(K)$, each $x_i$ is in at most one of $\pi_i(K_j)$, for each $i=1,\cdots, \ell$. Therefore, as there are $\ell+1$ of the $K_j$'s in total, there must exist some $K_j$ such that $x\notin {\rm Cross}(K_j)$.

One then has that
\begin{equation}\label{eqn: invisible 3}
{\rm Inv}^{(\ell)}(K)\cap {\rm Cross}(K)\subset \bigcup_{j=1}^n \big({\rm Inv}^{(\ell)}(K_j)\setminus {\rm Cross}(K_j)\big).
\end{equation}

Indeed, suppose $x$ is not contained in the union on the right hand side. It suffices to consider the case that $x\in {\rm Cross}(K)$, as otherwise it is not contained in the left hand side as desired. Using the property above, one has that there is some $j$ such that $x\notin {\rm Cross}(K_j)$. Since $x\notin {\rm Inv}^{(\ell)}(K_j)\setminus {\rm Cross}(K_j)$, there holds $x\notin {\rm Inv}^{(\ell)}(K_j)$. This means that $K_j$ is $\ell$-parameter visible from $x$. Therefore, because $K_j\subset K$, one also has $K$ is $\ell$-parameter visible from $x$, i.e. $x\notin {\rm Inv}^{(\ell)}(K)$.

Now, applying (\ref{eqn: invisible 2}) to each $K_j$, one has from (\ref{eqn: invisible 3}) that
\[
{\rm dim}({\rm Inv}^{(\ell)}(K)\cap {\rm Cross}(K))\leq 2(d-1)-\alpha.
\]Since this holds for arbitrary $\alpha<{\rm dim}(K)$, the desired estimate follows.

\end{document}